\documentclass[reqno,centertags]{amsart}

\usepackage{amssymb,amsmath,amsthm} \usepackage{dsfont}

\newcommand{\R}{\mathbb{R}} \newcommand{\Z}{\mathbb{Z}}
 \newcommand{\C}{\mathbb{C}}
\newcommand{\N}{\mathbb{N}}

\theoremstyle{plain} \newtheorem{theorem}{Theorem}[section]

\newtheorem{corollary}[theorem]{Corollary}
\newtheorem{prop}[theorem]{Proposition}

\theoremstyle{definition} \newtheorem{definition}[theorem]{Definition}
\theoremstyle{remark} \newtheorem{remark}{Remark}

\DeclareMathOperator{\supp}{supp}

\newcommand*{\hx}[3]{\ensuremath{\dot{X}^{#1,#2,#3}}}
\newcommand*{\hb}[3]{\ensuremath{\dot{B}^{#1}_{#2,#3}}}
\newcommand{\eps}{\varepsilon} \newcommand{\lb}{\langle}
\newcommand{\rb}{\rangle}
\newcommand{\ls}{\lesssim}\newcommand{\gs}{\gtrsim}

\begin{document}

\author[M.~Hadac]{Martin~Hadac} \email{hadac@math.uni-bonn.de}
\author[S.~Herr]{Sebastian~Herr} \email{herr@math.uni-bonn.de}
\author[H.~Koch]{Herbert~Koch} \email{koch@math.uni-bonn.de}

\address{%
  Rheinische Friedrich-Wilhelms-Universit\"at Bonn, Mathematisches Institut,
  Bering\-stra{\ss}e 1, 53115 Bonn, Germany.}

\title[The KP-II equation in a critical space]{%
  Well-posedness and scattering for the KP-II equation in a critical space}

\begin{abstract}
  The Cauchy problem for the Kadomtsev-Petviashvili-II equation $(u_t
  +u_{xxx}+uu_x)_x+u_{yy}=0$ is considered. A small data global
  well-posedness and scattering result in the scale invariant,
  non-isotropic, homogeneous Sobolev space $\dot H^{-\frac12,0}(\R^2)$
  is derived. Additionally, it is proved that for arbitrarily large
  initial data the Cauchy problem is locally well-posed in the
  homogeneous space $\dot H^{-\frac12,0}(\R^2)$ and in the
  inhomogeneous space $H^{-\frac12,0}(\R^2)$, respectively.
\end{abstract}

\keywords{Kadomtsev-Petviashvili-II equation, scale invariant space,
   well-posedness, scattering, bilinear estimates, bounded p-variation}

\subjclass[2000]{35Q55}

\maketitle

\section{Introduction and main result}\label{sect:intro_main}
\noindent
The Kadomtsev-Petviashvili-II (KP-II) equation
\begin{equation}\label{eq:kpII}
  \begin{split}
    \partial_x (\partial_t u +\partial_{x}^3 u+u \partial_x
    u)+\partial_y^2 u=0 &
    \quad \text{ in } (0,\infty) \times \R^2\\
    u(0,x,y)=u_0(x,y) & \quad (x,y)\in \R^2
  \end{split}
\end{equation}
has been introduced by B.B.~Kadomtsev and V.I.~Petviashvili
\cite{kadomtsev:70} to describe weakly transverse water waves in the
long wave regime with small surface tension. It generalizes the
Korteweg - de Vries equation, which is spatially one dimensional and
thus neglects transversal effects. The KP-II equation has a remarkably
rich structure. Let us begin with its symmetries and assume that $u$
is a solution of \eqref{eq:kpII}.
\begin{enumerate}
\item\label{it:symm1} \emph{Translation:} Translates of $u$ in $x$,
  $y$ and $t$ are solutions.
\item\label{it:scaling} \emph{Scaling:} If $\lambda > 0$ then also
  \begin{equation}\label{eq:scaling}
    u_\lambda(t,x,y) = \lambda^2 u(\lambda^3 t, \lambda x, \lambda^2 y)
  \end{equation}
  is a solution.
\item\label{it:symm3} \emph{Galilean invariance:}
  For all $c\in \R$ the function
  \begin{equation}\label{eq:galilei}
    u_c(t,x,y) = u(t,x-cy-c^2t,y+2ct)
  \end{equation}
  satisfies equation \eqref{eq:kpII}.
\end{enumerate}

The KP-II equation is integrable in the sense that there exists a Lax
pair.  Formally, there exists an infinite sequence of conserved
quantities \cite{SZ80}, the two most important beeing
\begin{equation*}
  I_0 =  \frac12 \int  u^2 dx dy
\end{equation*}
and
\begin{equation*}
  I_1 = \frac12  \int (\partial_x u)^2 - \frac13 u^3 - (\partial_x^{-1}
  \partial_y u)^2 dx dy.
\end{equation*}
The conserved quantities besides $I_0$ seem to be useless for proofs
of well-posedness, because of the difficulty to define
$\partial_x^{-1}$ and because the quadratic term is indefinite.

There are many explicit formulas for solutions, see \cite{MR1964513}.
Particular solutions are the line solitons coming from solitons of the
Korteweg - de Vries equation, their Galilei transforms, and multiple
line soliton solutions with an intricate structure, see
\cite{MR2307861}.

It may be possible to apply the machinery of inverse scattering to
solve the initial value problem and to obtain asymptotics for
solutions, see \cite{Kiselev06} for some results in that direction. It
is however not clear which classes of initial data can be treated.

The line solitons are among the simplest solutions. An analysis of the
spectrum of the linearization and inverse scattering indicate that the
line soliton is stable \cite{kadomtsev:70,RT07}.  A satisfactory
nonlinear stability result for the line soliton is an outstanding
problem.

In this paper we want to make a modest step towards this challenging
question: We prove well-posedness and scattering in a critical space.
These results are in remarkable contrast to the situation for the
Korteweg - de Vries equation where the critical space is
$H^{-\frac32}(\R)$ and iteration techniques, as employed in the
present work, are known \cite{CCT03} to fail for initial data below
$H^{-\frac34}(\R)$.  Stability of solitons has been proved by inverse
scattering techniques and by convexity arguments using conserved
quantities \cite{MR2109467} which has no chance to carry over to KP-II
because the quadratic part of $I_1$ is not convex.

We study the Cauchy problem \eqref{eq:kpII} for initial data $u_0$ in
the non-isotropic Sobolev space $H^{-\frac12,0}(\R^2)$ and in the
homogeneous variant $\dot H^{-\frac12,0}(\R^2)$, respectively, which
are defined as spaces of distributions with $-\frac12$ generalized
$x$-deriva\-tives in $L^2(\R^2)$, see \eqref{eq:inhom_sob} and
\eqref{eq:hom_sob} at the end of this section.  These spaces are
natural for KP-II equation because of the following considerations:
The homogeneous space $\dot H^{-\frac12,0}(\R^2)$ is invariant under
the scaling symmetry \eqref{eq:scaling} of solutions of the KP-II
equation as well as under the action of the Galilei transform
\eqref{eq:galilei} for fixed $t$. Any Fourier multiplier $m$ invariant
under scaling and reflection satisfies $m(\xi,\eta) = |\xi|^{-1/2}
m(1,\eta/|\xi|^2)$.  Galilean invariance now implies that $m$ is
independent of $\eta$.

While in the super-critical range, i.e.  $s<-\frac12$, the scaling
symmetry suggests ill-posedness of the Cauchy problem (cp.\ also
\cite{KenigZiesler05} Theorem 4.2), we will prove global
well-posedness and scattering in $\dot H^{-\frac12,0}(\R^2)$ for small
initial data, see Theorem~\ref{thm:gwp_hom} and Corollary~\ref{cor:scatter},
and local well-posedness in $H^{-\frac12,0}(\R^2)$
and $\dot H^{-\frac12,0}(\R^2)$ for arbitrarily large initial data,
see Theorem~\ref{thm:lwp_hom_inhom}.

After J.~Bourgain \cite{bourgain93} established global well-posedness
in $L^2(\mathbb{T}^2)$ and $L^2(\mathbb{\R}^2)$ by the Fourier
restriction norm method and opened up the way towards a low regularity
well-posedness theory, there has been a lot of progress in this line
of research. We will only mention the most recent results and also
refer to the references therein.  Local well-posedness in the full
sub-critical range $s>-\frac12$ was obtained by H.~Takaoka
\cite{takaoka01_2} in the homogeneous spaces and by the first author
\cite{Hadac07} in the inhomogeneous spaces.  Global well-posedness for
large, real valued data in $H^{s,0}(\R^2)$ has been pushed down to
$s>-\frac{1}{14}$ by P.~Isaza - J.~Mej{\'{\i}}a \cite{isaza03}.

The first main result of this paper is concerned with small data
global well-posedness in $\dot H^{-\frac12,0}(\R^2)$.  For $\delta>0$
we define
\begin{equation*}
  \dot B_\delta :=\{u_0 \in \dot H^{-\frac12,0}(\R^2) \mid \|u_0\|_{\dot
    H^{-\frac12,0}}<\delta\},
\end{equation*}
and obtain the following:
\begin{theorem}\label{thm:gwp_hom}
  There exists $\delta>0$, such that for all initial data $u_0 \in
  \dot B_\delta$ there exists a solution
  \begin{equation*}
    u\in \dot Z^{-\frac12}([0,\infty))\subset
    C([0,\infty);\dot H^{-\frac12,0}(\R^2))
  \end{equation*}
  of the KP-II equation \eqref{eq:kpII} on $(0,\infty)$. If for some
  $T>0$ a solution $v\in
  Z^{-\frac12}([0,T])$ on $(0,T)$ satisfies
  $v(0)=u(0)$, then $v=u|_{[0,T]}$.
  Moreover, the flow map
  \begin{equation*}
    F_+: \dot B_\delta \to \dot Z^{-\frac12}([0,\infty)), \,  u_0\mapsto u 
  \end{equation*}
  is analytic.
\end{theorem}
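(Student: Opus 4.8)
The plan is to recast \eqref{eq:kpII} in its Duhamel form
\begin{equation*}
  u(t)=U(t)u_0-\tfrac12\int_0^t U(t-s)\,\partial_x\bigl(u(s)^2\bigr)\,ds,
\end{equation*}
where $U(t)$ is the free KP-II evolution with Fourier multiplier $e^{it(\xi^3-\eta^2/\xi)}$, and to solve it by a contraction mapping argument in a ball of the scale-invariant space $\dot Z^{-\frac12}([0,\infty))$. Here $\dot Z^{-\frac12}$ is the $U^p$/$V^p$-type space adapted to $U(t)$: it is designed so that $\|U(\cdot)u_0\|_{\dot Z^{-\frac12}}\ls\|u_0\|_{\dot H^{-\frac12,0}}$, so that the Duhamel operator $f\mapsto\int_0^{\cdot}U(\cdot-s)f(s)\,ds$ is bounded from a companion ``nonlinearity space'' $\dot N^{-\frac12}$ into $\dot Z^{-\frac12}$, and so that $\dot Z^{-\frac12}([0,\infty))\hookrightarrow C([0,\infty);\dot H^{-\frac12,0}(\R^2))$; this last embedding is what makes the inclusion in the statement meaningful and yields continuity in time of the solution.

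The analytic core of the proof is the bilinear estimate
\begin{equation*}
  \bigl\|\partial_x(uv)\bigr\|_{\dot N^{-\frac12}}\ls\|u\|_{\dot Z^{-\frac12}}\,\|v\|_{\dot Z^{-\frac12}}.
\end{equation*}
I would establish it by a Littlewood--Paley decomposition in the $x$-frequency $\xi$ followed by a case analysis of the frequency interactions (high$\,\times\,$high$\,\to\,$low, high$\,\times\,$low$\,\to\,$high, and comparable frequencies), exploiting the KP-II resonance relation for the phase $\xi\mapsto\xi^3-\eta^2/\xi$ together with the transversal $\eta$-integration, and using the $L^4_{t,x,y}$ Strichartz estimate and a bilinear $L^2$ smoothing estimate for $U(t)$ transferred to the $V^p$-spaces via the transference principle. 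At the critical exponent $s=-\frac12$ the borderline interactions generate logarithmically divergent sums in the classical $X^{s,b}$ framework; working in the $U^p$/$V^p$ scale is precisely what absorbs this divergence, through the duality $(U^p)^\ast=V^{p'}$ and the logarithmic interpolation inherent in these spaces. I expect this estimate --- and within it the low-$x$-frequency regime, where the $\partial_x^{-1}$ in the symbol is singular --- to be the main obstacle.

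Granting the bilinear estimate and the two linear estimates above, the remainder is routine. For $\Phi(u):=U(\cdot)u_0-\tfrac12\int_0^{\cdot}U(\cdot-s)\partial_x(u(s)^2)\,ds$ one obtains
\begin{equation*}
  \|\Phi(u)\|_{\dot Z^{-\frac12}}\le C\|u_0\|_{\dot H^{-\frac12,0}}+C\|u\|_{\dot Z^{-\frac12}}^2
\end{equation*}
and
\begin{equation*}
  \|\Phi(u)-\Phi(v)\|_{\dot Z^{-\frac12}}\le C\bigl(\|u\|_{\dot Z^{-\frac12}}+\|v\|_{\dot Z^{-\frac12}}\bigr)\|u-v\|_{\dot Z^{-\frac12}},
\end{equation*}
so for $\delta$ small $\Phi$ is a contraction of the closed ball of radius $2C\delta$ in $\dot Z^{-\frac12}([0,\infty))$, with unique fixed point $u$. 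Analyticity of $F_+$ follows because $\Phi$ is affine in $u_0$ and quadratic in $u$: the fixed point is given by a series $u=\sum_{k\ge1}A_k[u_0,\dots,u_0]$ of iterated Duhamel multilinear terms with $\|A_k[u_0,\dots,u_0]\|_{\dot Z^{-\frac12}}\le C^k\|u_0\|_{\dot H^{-\frac12,0}}^k$, which converges and defines an analytic map on $\dot B_\delta$ (equivalently, apply the analytic implicit function theorem to $u-\Phi(u)=0$, noting that $\Id-D_u\Phi$ is invertible on the small ball).

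For the uniqueness assertion --- stated in the possibly larger inhomogeneous space $Z^{-\frac12}([0,T])$ and without a smallness hypothesis --- the contraction argument gives it only on short intervals, so I would upgrade it by the usual patching: if $u,v\in Z^{-\frac12}([0,T])$ solve the Duhamel equation with $v(0)=u(0)$, choose a partition $0=t_0<\dots<t_M=T$ fine enough that both $\|u\|_{Z^{-\frac12}([t_{j-1},t_j])}$ and $\|v\|_{Z^{-\frac12}([t_{j-1},t_j])}$ are small (a standard subdivision property of these spaces), apply the difference estimate on $[t_0,t_1]$ to get $u=v$ there, and iterate, restarting each time from the common value at $t_j$. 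Finally, the scattering statement of Corollary~\ref{cor:scatter} drops out of the Duhamel estimate, since $\int_0^t U(-s)\partial_x(u(s)^2)\,ds$ converges in $\dot H^{-\frac12,0}(\R^2)$ as $t\to\infty$, whence $u(t)-U(t)u_+\to0$ in $\dot H^{-\frac12,0}$ with $u_+:=u_0-\tfrac12\int_0^\infty U(-s)\partial_x(u(s)^2)\,ds$.
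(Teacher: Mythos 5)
Your overall scheme --- Duhamel formulation, contraction in a ball of $\dot Z^{-\frac12}([0,\infty))$ built on the $U^p/V^p$ scale, bilinear estimate proved by Littlewood--Paley and modulation decomposition plus $L^4$ Strichartz, bilinear $L^2$ smoothing, the resonance identity, and $(U^p)^\ast=V^{p'}$ duality, with analyticity from the implicit function theorem --- is exactly the paper's route. The packaging of the Duhamel bound via a ``nonlinearity space'' $\dot N^{-\frac12}$ is cosmetic; the paper dispenses with it and estimates $I_T(u_1,u_2)$ directly in $\dot Z^{-\frac12}$ through the duality pairing and Proposition~\ref{prop:c1}, which plays the same role.

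There is, however, a genuine gap in your uniqueness argument. You invoke a ``standard subdivision property of these spaces,'' namely that $\|u\|_{Z^{-\frac12}([t_{j-1},t_j])}$ can be made small by shrinking the interval. This is false for $U^p/V^p$-type spaces: for example $\|e^{\cdot S}\phi\|_{U^2_S([0,T'])}=\|\phi\|_{L^2}$ for every $T'>0$, so the $\dot Z^{-\frac12}$ restriction norm does \emph{not} shrink as the interval does. The correct version of the smallness statement is Proposition~\ref{prop:cont_no}, which requires the additional hypothesis $u(0)=0$; without it the claim simply fails. Because of this, the naive iteration on a fine partition does not close. The paper's actual uniqueness proof (given in the proof of Theorem~\ref{thm:lwp_hom_inhom} and referenced for Theorem~\ref{thm:gwp_hom} via Remark~\ref{rem:uniqueness}) circumvents the problem by first reducing to $T'=\sup\{t:u_1(t)=u_2(t)\}$ and translating so $T'=0$, then decomposing each solution as $u_j=v_j+w_j$ with $v_j\in X$ and $w_j\in\dot Z^{-\frac12}$ \emph{and} $w_j(0)=0$. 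Smallness then comes from two genuinely different mechanisms: for the $w_j$ part Proposition~\ref{prop:cont_no} applies (because of the vanishing at $0$), while for the $v_j\in X$ part one needs the explicit time gain $\tau^{\frac14-\eps}$ from the inhomogeneous bilinear estimate \eqref{eq:bil_est_dyadic2}. Your sketch uses neither mechanism and, as written, would not yield the contraction on short intervals.
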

In order to state the second main result of this paper let us define
\begin{equation*}
  B_{\delta,R}:=\{u_0 \in H^{-\frac12,0}(\R^2) \mid
  u_0=v_0+w_0, \, \|v_0\|_{\dot H^{-\frac12,0}}<\delta, \|w_0\|_{L^2}<R\},
\end{equation*}
for $\delta>0, R>0$. We establish local well-posedness for arbitrarily
large initial data, both in $H^{\frac12,0}(\R^2)$ and $\dot
H^{\frac12,0}(\R^2)$:
\begin{theorem}\label{thm:lwp_hom_inhom}
  \begin{enumerate}\item\label{it:lwp_inhom}
    There exists $\delta>0$ such that for all $R\geq \delta$ and $u_0
    \in B_{\delta,R}$ there exists a solution
    \begin{equation*}
      u\in Z^{-\frac12}([0,T])\subset C([0,T];H^{-\frac12,0}(\R^2))
    \end{equation*}
    for $T:=\delta^6 R^{-6}$ of the KP-II equation \eqref{eq:kpII} on
    $(0,T)$.  If a solution
    $v\in Z^{-\frac12}([0,T])$ on $(0,T)$
    satisfies $v(0)=u(0)$, then $v=u|_{[0,T]}$.
    Moreover, the flow map
    \begin{equation*}
      B_{\delta,R} \ni u_0\mapsto u \in  Z^{-\frac12}([0,T])
    \end{equation*}
    is analytic.
  \item\label{it:lwp_hom} The statement in Part~\ref{it:lwp_inhom}
    remains valid if we replace the space $H^{-\frac12,0}(\R^2)$ by
    $\dot H^{-\frac12,0}(\R^2)$ as well as $Z^{-\frac12}([0,T])$ by
    $\dot Z^{-\frac12}([0,T])$.
  \end{enumerate}
\end{theorem}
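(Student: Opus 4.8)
The plan is to solve the Duhamel reformulation of \eqref{eq:kpII},
\[
  u(t)=e^{tL}u_0-\tfrac12\int_0^t e^{(t-s)L}\partial_x(u^2)(s)\,ds ,
\]
where $L=-\partial_x^3-\partial_x^{-1}\partial_y^2$ generates the free KP-II evolution, by a contraction argument in $Z^{-\frac12}([0,T])$, following the contraction scheme used for Theorem~\ref{thm:gwp_hom}. The quantitative ingredients, all supplied by the earlier sections, are the linear estimate $\|e^{tL}u_0\|_{Z^{-\frac12}([0,T])}\lesssim\|u_0\|_{H^{-\frac12,0}}$, the bilinear estimate $\|\int_0^t e^{(t-s)L}\partial_x(uv)(s)\,ds\|_{Z^{-\frac12}([0,T])}\lesssim\|u\|_{Z^{-\frac12}([0,T])}\|v\|_{Z^{-\frac12}([0,T])}$, and the embedding $Z^{-\frac12}([0,T])\hookrightarrow C([0,T];H^{-\frac12,0})$. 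The only feature absent in the small-data situation is that $u_0$ is not small in the scale invariant norm, and I would remove it by means of the scaling \eqref{eq:scaling}: writing $u_0=v_0+w_0\in B_{\delta,R}$ and putting $u_{0,\lambda}(x,y)=\lambda^2u_0(\lambda x,\lambda^2y)$ with $\lambda:=\delta^2R^{-2}\le1$, one has $\|v_{0,\lambda}\|_{\dot H^{-\frac12,0}}=\|v_0\|_{\dot H^{-\frac12,0}}<\delta$ (the homogeneous norm is scaling invariant) and $\|w_{0,\lambda}\|_{L^2}=\lambda^{1/2}\|w_0\|_{L^2}<\delta$, hence $\|u_{0,\lambda}\|_{H^{-\frac12,0}}\le\|v_{0,\lambda}\|_{\dot H^{-\frac12,0}}+\|w_{0,\lambda}\|_{L^2}<2\delta$, using that $\|f\|_{H^{-\frac12,0}}\le\min\{\|f\|_{\dot H^{-\frac12,0}},\|f\|_{L^2}\}$ for every $f$. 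It therefore suffices to solve the rescaled problem on the unit interval $[0,1]$, after which undoing the scaling produces a solution of \eqref{eq:kpII} on $[0,\lambda^3]=[0,\delta^6R^{-6}]$, which is the asserted $T$. (Equivalently one may work directly on $[0,T]$, the smallness of the $L^2$-component of the data then entering through a time-localised linear estimate that gains a power $T^{1/6}$; the exponent $1/6$ is the one dictated by \eqref{eq:scaling}, since $\|w_0\|_{L^2}$ carries scaling weight $\lambda^{1/2}$ while time scales like $\lambda^3$.)

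On $[0,1]$ the map $\Phi(u):=e^{tL}u_{0,\lambda}-\tfrac12\int_0^t e^{(t-s)L}\partial_x(u^2)(s)\,ds$ satisfies, by the linear and bilinear estimates, $\|\Phi(u)\|_{Z^{-\frac12}([0,1])}\lesssim\delta+\|u\|_{Z^{-\frac12}([0,1])}^2$ and $\|\Phi(u)-\Phi(u')\|_{Z^{-\frac12}([0,1])}\lesssim(\|u\|_{Z^{-\frac12}([0,1])}+\|u'\|_{Z^{-\frac12}([0,1])})\|u-u'\|_{Z^{-\frac12}([0,1])}$, so for $\delta$ small enough, fixed once and for all, $\Phi$ is a contraction on a ball of radius comparable to $\delta$ in $Z^{-\frac12}([0,1])$, and its fixed point is the desired solution. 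Since $u_{0,\lambda}$ depends affinely on $u_0$ and $\Phi$ quadratically on $u$, the fixed point is the sum of a convergent series of multilinear functions of $u_0$, which yields the analyticity of the flow map. To upgrade the uniqueness statement from the contraction ball to all of $Z^{-\frac12}([0,T])$, suppose $v\in Z^{-\frac12}([0,T])$ solves \eqref{eq:kpII} with $v(0)=u(0)$: the set $\{t\in[0,T]:v\equiv u\text{ on }[0,t]\}$ is nonempty and closed, and it is open, because restarting the contraction at any of its points $t_0$ — and using that $\|v\|_{Z^{-\frac12}([t_0,t_0+\tau])}\to0$ and $\|u\|_{Z^{-\frac12}([t_0,t_0+\tau])}\to0$ as $\tau\to0$, a standard continuity property of the space — forces $v$ and $u$ into the relevant small ball on $[t_0,t_0+\tau]$ and hence to coincide there. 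Thus the set is all of $[0,T]$.

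Part~\ref{it:lwp_hom} follows by running the identical argument in the homogeneous spaces, with $H^{-\frac12,0}$ and $Z^{-\frac12}$ replaced throughout by $\dot H^{-\frac12,0}$ and $\dot Z^{-\frac12}$; undoing the rescaling is now immediate since the homogeneous spaces are exactly scaling invariant. The one place where the argument genuinely differs from Part~\ref{it:lwp_inhom} is the treatment of $w_0$: when $u_0\in\dot H^{-\frac12,0}$ one necessarily has $w_0=u_0-v_0\in\dot H^{-\frac12,0}\cap L^2$, and after rescaling its low-frequency part has to be estimated in $\dot Z^{-\frac12}([0,1])$ with the weight $|\xi|^{-1/2}$ rather than $\langle\xi\rangle^{-1/2}$, which is done by peeling off a small homogeneous low-frequency tail and exploiting the time localisation on the frequency-bounded remainder. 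I expect precisely this low-frequency bookkeeping — and, underneath everything, the sharp form of the bilinear estimate established in the earlier sections, which is the real engine of the whole iteration and leaves no slack because $\dot H^{-\frac12,0}$ is critical — to be the only delicate points; the rest is a transcription of the argument already carried out for Theorem~\ref{thm:gwp_hom}.
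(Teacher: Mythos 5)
Your construction of the solution—the rescaling $\lambda=\delta^2R^{-2}$ to reduce to $B_{\delta,\delta}$, contraction on the unit interval via Theorem~\ref{thm:bil_est_inhom}, analyticity from the implicit function theorem, and rescaling back to $[0,\delta^6R^{-6}]$—is exactly the paper's argument and is correct, including the $L^2$ scaling weight $\lambda^{1/2}$ and the time weight $\lambda^3$.

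The uniqueness argument, however, has a genuine gap. You claim that $\|u\|_{Z^{-\frac12}([t_0,t_0+\tau])}\to0$ and $\|v\|_{Z^{-\frac12}([t_0,t_0+\tau])}\to0$ as $\tau\to0$ and call this ``a standard continuity property of the space.'' This is false whenever $u(t_0)\neq 0$: the $Z^{-\frac12}$ norm controls the $C_tH^{-\frac12,0}$ norm, so the restriction norm is bounded below by $\|u(t_0)\|_{H^{-\frac12,0}}$. The continuity property you need (Proposition~\ref{prop:cont_no}) applies only to functions vanishing at $t_0$, which $u$ and $v$ do not. The paper's uniqueness argument is finer: it decomposes $u_j=v_j+w_j$ with $v_j\in X([0,T])$ carrying the initial data and $w_j\in\dot Z^{-\frac12}([0,T])$ with $w_j(0)=0$, and then the factor multiplying $\|u_1-u_2\|_{Z^{-\frac12}([0,\tau])}$ on the right side of the contraction estimate is $C\,\tau^{\frac14-\eps}(\|v_1\|_X+\|v_2\|_X) + C(\|w_1\|_{\dot Z^{-\frac12}}+\|w_2\|_{\dot Z^{-\frac12}})$. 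The first summand is small by the explicit $\tau^{\frac14-\eps}$ gain of Proposition~\ref{prop:bil_est_dyadic2}; the second by Proposition~\ref{prop:cont_no} applied to the $w_j$, which \emph{do} vanish at $0$. Without the $\tau^{\frac14-\eps}$ factor—which comes from the refined dyadic bilinear estimate, not from any continuity of the $Z^{-\frac12}$ norm—the argument does not close.

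For Part~\ref{it:lwp_hom} your plan to ``run the identical argument with $Z^{-\frac12}$ replaced throughout by $\dot Z^{-\frac12}$'' also does not work as stated, for the reason you partially flag yourself: after rescaling, $\|w_{0,\lambda}\|_{L^2}$ is small but $\|w_{0,\lambda}\|_{\dot H^{-\frac12,0}}$ is scale-invariant and need not be small, so the linear term is not small in $\dot Z^{-\frac12}([0,1])$ and the contraction cannot be set up directly in that space. Your proposed remedy (``peeling off a small homogeneous low-frequency tail and exploiting the time localisation'') is too vague to assess. The paper's route is considerably simpler and should replace it: carry out the contraction exactly as in Part~\ref{it:lwp_inhom}, in $Z^{-\frac12}([0,1])$, obtaining $u\in Z^{-\frac12}([0,1])$. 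Then observe that the estimate \eqref{eq:bil_est_inhom} sends $Z^{-\frac12}\times Z^{-\frac12}$ into the \emph{homogeneous} space $\dot Z^{-\frac12}$, so the Duhamel term $I_1(u,u)$ lies in $\dot Z^{-\frac12}([0,1])$ automatically; and when $u_0\in\dot H^{-\frac12,0}$ one also has $e^{\cdot S}u_0\in\dot Z^{-\frac12}([0,1])$. Hence the fixed point $u=e^{\cdot S}u_0-\tfrac12 I_1(u,u)$ already belongs to $\dot Z^{-\frac12}([0,1])$, with no change to the iteration space.
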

\begin{remark}\label{rem:def_sp}
  For the definition of the spaces $\dot Z^{-\frac12}(I)$ and
  $Z^{-\frac12}(I)$ we refer the reader to Definition~\ref{def:res_space}
  and the subsequent Remark~\ref{rem:restr}.
  In particular, we have the embedding
  $\dot Z^{-\frac12}(I)\subset Z^{-\frac12}(I)$.
  Moreover, a solution of the KP-II equation
  \eqref{eq:kpII} is understood to be a solution of the corresponding
  operator equation \eqref{eq:int_eq}, compare Section~\ref{sect:proof_main}.
\end{remark}
\begin{remark}\label{rem:time_rev}
  Due to the time reversibility of the KP-II equation, the above
  Theorems also hold in corresponding intervals $(T,0)$, $-\infty\leq
  T<0$. We denote the flow map with respect to $(-\infty,0)$ by $F_-$.
\end{remark}
\begin{remark}\label{rem:toe_freq_prof} For each $u_0 \in
  H^{-\frac12,0}(\R^2)$ and $\delta>0$ there exists $N>0$ such that
  $\|P_{\geq N} u_0\|_{\dot H^{-\frac12,0}}<\delta$.  We obviously
  have the representation $u_0=P_{\geq N} u_0 + P_{< N} u_0$, thus
  $u_0\in B_{\delta,R}$ for some $R>0$.  However, the time of local
  existence provided by Theorem~\ref{thm:lwp_hom_inhom} for large data
  may depend on the profile of the Fourier transform of $u_0$, not
  only on its norm.
\end{remark}
\begin{remark}\label{rem:pers_reg} The well-posedness
  results above are presented purely at the critical level of
  regularity $s=-\frac12$ as this is the most challenging case.  As
  the reader will easily verify by the standard modification of our
  arguments, the estimates also imply persistence of higher initial
  regularity.
\end{remark}
A consequence of Theorem~\ref{thm:gwp_hom} is scattering in $\dot
H^{-\frac12,0}(\R^2)$.
\begin{corollary}\label{cor:scatter}
  Let $\delta>0$ be as in Theorem~\ref{thm:gwp_hom}.  For every $u_0
  \in \dot B_\delta$ there exists $u_\pm \in \dot
  H^{-\frac12,0}(\R^2)$ such that
  \begin{equation*}
    F_\pm (u_0)(t)-e^{tS}u_\pm \to 0 \text{ in } \dot H^{-\frac12,0}(\R^2)
    \text{ as } t\to \pm\infty,
  \end{equation*}
  The maps
  \begin{equation*}
    V_\pm: \dot B_\delta \to \dot H^{-\frac12,0}(\R^2), \, u_0\mapsto u_\pm
  \end{equation*}
  are analytic, respectively.  For $u_0\in L^2(\R^2)\cap \dot
  B_\delta$ we have
  \begin{equation*}
    \|V_\pm(u_0)\|_{L^2}=\|u_0\|_{L^2}.
  \end{equation*}
  Moreover, the local inverses, the wave operators
  \begin{equation*}
    W_\pm: \dot B_\delta \to \dot H^{-\frac12,0}(\R^2), \, u_\pm \mapsto u(0)
  \end{equation*}
  exist and are analytic, respectively. For $u_\pm\in L^2(\R^2)\cap
  \dot B_\delta$ we have
  \begin{equation*}
    \|W_\pm(u_\pm)\|_{L^2}=\|u_0\|_{L^2}.
  \end{equation*}
\end{corollary}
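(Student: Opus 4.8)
The plan is to deduce scattering from the structure of the solution space $\dot Z^{-\frac12}([0,\infty))$, whose definition (see Definition~\ref{def:res_space}) is modelled on the $U^p$/$V^p$ spaces of bounded $p$-variation. The key point is that functions in such spaces, after applying the inverse propagator $e^{-tS}$, converge in $\dot H^{-\frac12,0}(\R^2)$ as $t\to\infty$: indeed, by construction the solution $u = F_+(u_0)$ satisfies the integral (Duhamel) equation \eqref{eq:int_eq}, so that $e^{-tS}u(t) = u_0 - \tfrac12\int_0^t e^{-sS}\partial_x(u(s)^2)\,ds$, and the bilinear estimate underlying Theorem~\ref{thm:gwp_hom} shows that the Duhamel term lies in a space (an atomic $U^p$-type space) whose elements have a limit as $t\to\infty$ in $\dot H^{-\frac12,0}$. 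Define
\begin{equation*}
  u_\pm := u_0 - \tfrac12\int_0^{\pm\infty} e^{-sS}\partial_x\big(F_\pm(u_0)(s)^2\big)\,ds \in \dot H^{-\frac12,0}(\R^2),
\end{equation*}
so that $e^{-tS}F_\pm(u_0)(t) \to u_\pm$, which is exactly the asserted convergence $F_\pm(u_0)(t) - e^{tS}u_\pm \to 0$ since $e^{tS}$ is an isometry on $\dot H^{-\frac12,0}$.

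Next I would address analyticity of $V_\pm$. Since $F_\pm$ is analytic from $\dot B_\delta$ into $\dot Z^{-\frac12}$ by Theorem~\ref{thm:gwp_hom}, and the map sending a function in $\dot Z^{-\frac12}$ to the endpoint value of its Duhamel term is the composition of a bounded bilinear (hence analytic) map with a bounded linear evaluation, $V_\pm = \Id - \tfrac12(\text{Duhamel endpoint})\circ(\text{squaring})\circ F_\pm$ is analytic as a composition of analytic maps. For the wave operators $W_\pm$, I would run the fixed-point argument of Theorem~\ref{thm:gwp_hom} around the free evolution $e^{tS}u_\pm$ rather than around $e^{tS}u_0$: for $\|u_\pm\|_{\dot H^{-\frac12,0}}$ small the contraction mapping still closes in $\dot Z^{-\frac12}([0,\infty))$ (the relevant linear and bilinear estimates only see the $\dot H^{-\frac12,0}$ norm of the data), producing a solution $u$ with $e^{-tS}u(t)\to u_\pm$; setting $W_\pm(u_\pm):=u(0)$ gives a map which, by the same fixed-point/analyticity bootstrap and the identity $V_\pm\circ W_\pm = \Id$, $W_\pm \circ V_\pm = \Id$ near $0$, is a local analytic inverse of $V_\pm$.

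Finally, for the $L^2$ identities, I would use conservation of $I_0=\tfrac12\int u^2$ along the flow together with persistence of $L^2$ regularity (Remark~\ref{rem:pers_reg}): if $u_0\in L^2\cap\dot B_\delta$ then $u(t)\in L^2$ for all $t$ with $\|u(t)\|_{L^2}=\|u_0\|_{L^2}$, and since $e^{-tS}$ is unitary on $L^2$ while $e^{-tS}u(t)\to u_\pm$ holds in $\dot H^{-\frac12,0}$, a uniform $L^2$ bound plus weak compactness forces $u_\pm\in L^2$ with $\|u_\pm\|_{L^2}=\|u_0\|_{L^2}$; the same argument applied to $W_\pm$ (now starting from $u_\pm\in L^2$ and running the evolution backward to $u(0)$) gives $\|W_\pm(u_\pm)\|_{L^2}=\|u_\pm\|_{L^2}$, which in the notation of the statement reads $\|u_0\|_{L^2}$. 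The main obstacle is the first step: one must verify that the Duhamel term really does have a strong limit in $\dot H^{-\frac12,0}$ as $t\to\pm\infty$, i.e. that the bilinear estimate is not merely a bound on $[0,\infty)$ but yields membership in an atomic space whose tails vanish; this is where the $U^p$-structure of $\dot Z^{-\frac12}$ (as opposed to a mere $X^{s,b}$-type bound) is essential, and one should check carefully that the endpoint-evaluation functional is bounded on the component spaces used in the definition of $\dot Z^{-\frac12}$.
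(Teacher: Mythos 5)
Your overall strategy aligns with the paper's on the existence of $u_\pm$ (the paper packages the crucial endpoint-limit fact as Corollary~\ref{cor:scatter_limit}, which is exactly the ``check carefully'' you flag) and on the analyticity of $V_\pm$. For $W_\pm$ you propose a direct construction via a fixed point around $e^{tS}u_\pm$, whereas the paper instead invokes the inverse function theorem, observing that $V_+(0)=0$ and $DV_+(0)=\Id$; both routes are viable and the paper's is arguably shorter given the analyticity of $V_\pm$ is already in hand.

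However, there is a genuine gap in your $L^2$-conservation argument. From $\|u(t)\|_{L^2}=\|u_0\|_{L^2}$, the unitarity of $e^{-tS}$, and the strong convergence $e^{-tS}u(t)\to u_+$ in $\dot H^{-\frac12,0}$, weak compactness and weak lower semicontinuity only yield $u_+\in L^2$ with
\begin{equation*}
  \|u_+\|_{L^2}\le \liminf_{t\to\infty}\|e^{-tS}u(t)\|_{L^2}=\|u_0\|_{L^2},
\end{equation*}
not equality: the weak limit could in principle lose mass. Re-running the ``same argument'' for $W_\pm$ does not rescue this, because starting instead from $u_+$ and running the flow you again only obtain $\|u_+\|_{L^2}\le\|W_+(u_+)\|_{L^2}$, i.e.\ the same one-sided inequality. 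The paper obtains the reverse inequality by a separate device: it applies the backward flow map $F_-$ to the approximants $e^{tS}u_+$ for \emph{finite} $t$, uses continuous dependence (Lipschitz continuity of $F_-$ from Theorem~\ref{thm:gwp_hom}) to conclude $F_-(-t,e^{tS}u_+)\to u_0$ in $\dot H^{-\frac12,0}$, and combines $L^2$-conservation of the flow, $\|F_-(-t,e^{tS}u_+)\|_{L^2}=\|u_+\|_{L^2}$, with weak $L^2$-convergence to $u_0$ to get $\|u_0\|_{L^2}\le\|u_+\|_{L^2}$. You need some such argument to close the equality; the weak-compactness step alone is insufficient.
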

\subsection*{Organization of the paper}
At the end of this section we introduce some notation.  In
Section~\ref{sect:disp_est} we review function spaces related to the
well-posedness theory for nonlinear dispersive PDE's, with a focus on
the recently introduced $U^p$ space in this context due to D. Tataru
and one of the authors, cp.\ \cite{KT05,KT07} and references therein,
as well as the closely related $V^p$ space due to N. Wiener
\cite{Wiener24}.  We believe that the techniques are useful and of
independent interest. For that reason we devoted a considerable effort
to the presentation of the methods even though most of the details are
implicitly contained in \cite{KT05,KT07}. Proposition~\ref{prop:interpolation}
however seems to be new.  In Section~\ref{sect:bil_est} we prove bilinear
estimates related to the KP-II
equation. These are the main ingredients for the proofs of our main
results, which are finally presented in Section~\ref{sect:proof_main}.
\subsection*{Notation}
The non-isotropic Sobolev spaces $H^{s_1,s_2}(\R^2)$ and $\dot
H^{s_1,s_2}(\R^2)$ are spaces of complex valued temperate
distributions, defined via the norms
\begin{align}
  \|u\|_{H^{s_1,s_2}}&:=\left( \int_{\R^2} \lb \xi\rb^{2s_1}\lb
    \eta\rb^{2s_2} |\widehat{u}(\xi,\eta)|^2
    d\xi d\eta\right)^{\frac12},\label{eq:inhom_sob}\\
  \|u\|_{\dot H^{s_1,s_2}}&:= \left( \int_{\R^2}
    |\xi|^{2s_1}|\eta|^{2s_2} |\widehat{u}(\xi,\eta)|^2 d\xi d\eta
  \right)^{\frac12},\label{eq:hom_sob}
\end{align}
respectively, where $\lb\xi\rb^2=1+|\xi|^2$. The $n$-dimensional
Fourier transform is defined as
\begin{equation*}
  \widehat{u}(\mu)=\mathcal{F}u(\mu)=(2\pi)^{-\frac n2}
  \int_{\R^n} e^{-ix \cdot \mu}u(x) dx
\end{equation*}
for $u \in L^1(\R^n)$, and extended to $\mathcal{S}'(\R^n)$ by
duality.  For $1\leq p \leq \infty$ we define the dual exponent $1\leq
p'\leq \infty$ by
\begin{equation*}
  \frac1p+\frac1{p'}=1.
\end{equation*}
\section{Function spaces and dispersive
  estimates}\label{sect:disp_est}
\noindent
In this section we discuss properties of function spaces of $U^p$ and
$V^p$ type \cite{KT05,KT07,Wiener24}.  In particular, we present
embedding results and a rigorous duality statement as well as
interpolation properties and an extension lemma for dispersive
estimates.  Though many aspects of these spaces are well known, the
interpolation result of Proposition~\ref{prop:interpolation} seems to
be new.

Let $\mathcal{Z}$ be the set of finite partitions
$-\infty=t_0<t_1<\ldots<t_K=\infty$ and let $\mathcal{Z}_0$ be the set
of finite partitions $-\infty<t_0<t_1<\ldots<t_K<\infty$.  In the
following, we consider functions taking values in $L^2:=L^2(\R^d;\C)$,
but in the general part of this section $L^2$ may be replaced by an
arbitrary Hilbert space.
\begin{definition}\label{def:u}
  Let $1\leq p <\infty$. For $\{t_k\}_{k=0}^K \in \mathcal{Z}$ and
  $\{\phi_k\}_{k=0}^{K-1} \subset L^2$ with
  $\sum_{k=0}^{K-1}\|\phi_k\|_{L^2}^p=1$ and $\phi_0=0$ we call the
  function $a:\R \to L^2$ given by
  \begin{equation*}
    a=\sum_{k=1}^{K} \mathds{1}_{[t_{k-1},t_{k})}\phi_{k-1}
  \end{equation*}
  a $U^p$-atom. Furthermore, we define the atomic space
  \begin{equation*}
    U^p:=\left\{u=\sum_{j=1}^\infty \lambda_j a_j \;\Big|\; a_j \text{ $U^p$-atom},\;
      \lambda_j\in \C \text{ s.th. } \sum_{j=1}^\infty |\lambda_j|<\infty \right\}
  \end{equation*}
  with norm
  \begin{equation}\label{eq:norm_u}
    \|u\|_{U^p}:=\inf \left\{\sum_{j=1}^\infty |\lambda_j|
      \;\Big|\; u=\sum_{j=1}^\infty \lambda_j a_j,
      \,\lambda_j\in \C,\; a_j \text{ $U^p$-atom}\right\}.
  \end{equation}
\end{definition}

\begin{prop}\label{prop:u}
  Let $1\leq p< q < \infty$.
  \begin{enumerate}
  \item\label{it:u_banach} $U^p$ is a Banach space.
  \item\label{it:u_emb} The embeddings $U^p\subset U^q\subset
    L^\infty(\R;L^2)$ are continuous.
  \item\label{it:u_right_cont} For $u\in U^p$ it holds
    $\lim_{t\downarrow t_0} \|u(t)-u(t_0)\|_{L^2}=0$, i.e.\ every $u\in
    U^p$ is right-continuous.
  \item\label{it:u_limits} $u(-\infty):=\lim_{t\to - \infty}u(t)=0$,
    $u(\infty):=\lim_{t\to \infty}u(t)$ exists.
  \item\label{it:u_c} The closed subspace $U^p_c$ of all continuous
    functions in $U^p$ is a Banach space.
  \end{enumerate}
\end{prop}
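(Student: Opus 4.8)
My plan is to deduce all five assertions from a single elementary observation about atoms: if $a=\sum_{k=1}^K\mathds{1}_{[t_{k-1},t_k)}\phi_{k-1}$ is a $U^p$-atom, then $\sum_{k=0}^{K-1}\|\phi_k\|_{L^2}^p=1$ forces $\|\phi_k\|_{L^2}\le 1$ for every $k$, hence $\|a\|_{L^\infty(\R;L^2)}\le 1$; moreover, for $q\ge p$ the elementary inequality $\|(c_k)_k\|_{\ell^q}\le\|(c_k)_k\|_{\ell^p}$ shows $\sum_k\|\phi_k\|_{L^2}^q\le 1$, so $a=c\,\tilde a$ for some $U^q$-atom $\tilde a$ and some $|c|\le 1$, i.e.\ $\|a\|_{U^q}\le 1$. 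Given $u=\sum_j\lambda_j a_j\in U^p$, the series converges absolutely, hence uniformly, in $L^\infty(\R;L^2)$ (because $\|a_j\|_{L^\infty L^2}\le 1$ and $\sum_j|\lambda_j|<\infty$); rewriting it term by term as a $U^q$-decomposition and passing to the infimum in \eqref{eq:norm_u} yields $\|u\|_{L^\infty L^2}\le\|u\|_{U^q}\le\|u\|_{U^p}$, which is \ref{it:u_emb}. In particular $\|\cdot\|_{U^p}$ separates points; together with homogeneity and the triangle inequality, both read off from \eqref{eq:norm_u} by rescaling and concatenating atomic decompositions, this shows $\|\cdot\|_{U^p}$ is a norm.

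For \ref{it:u_right_cont} and \ref{it:u_limits} I would first note that a $U^p$-atom is a right-continuous $L^2$-valued step function that vanishes on $(-\infty,t_1)$ and equals $\phi_{K-1}$ on $[t_{K-1},\infty)$, and that finite linear combinations of atoms inherit all three properties. For general $u\in U^p$ the partial sums $u_N=\sum_{j\le N}\lambda_j a_j$ converge to $u$ uniformly in $L^\infty(\R;L^2)$ by the previous paragraph; a uniform limit of right-continuous functions is right-continuous, which gives \ref{it:u_right_cont}. Letting $t\to-\infty$ in $\|u(t)\|_{L^2}\le\|u_N(t)\|_{L^2}+\|u-u_N\|_{L^\infty L^2}$ and then $N\to\infty$ gives $u(-\infty)=0$, and the existence of $u(\infty)$ follows from the Cauchy criterion: since $u_N$ is eventually constant, $\|u(t)-u(s)\|_{L^2}\le 2\|u-u_N\|_{L^\infty L^2}$ for all sufficiently large $t,s$.

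The real work is completeness in \ref{it:u_banach}, which I would establish by showing that every absolutely convergent series in $U^p$ converges (equivalent to completeness for a normed space). Given $v_n\in U^p$ with $\sum_n\|v_n\|_{U^p}<\infty$, I pick for each $n$ an atomic decomposition $v_n=\sum_j\lambda_j^{(n)}a_j^{(n)}$ with $\sum_j|\lambda_j^{(n)}|\le\|v_n\|_{U^p}+2^{-n}$. Then $\sum_{n,j}|\lambda_j^{(n)}|<\infty$, so $u:=\sum_{n,j}\lambda_j^{(n)}a_j^{(n)}$ defines an element of $U^p$; the series converges absolutely in $L^\infty(\R;L^2)$, so regrouping by $n$ we have $u=\sum_n v_n$ there as well, and
\begin{equation*}
  \Bigl\| u-\sum_{n\le N}v_n \Bigr\|_{U^p}
  = \Bigl\| \sum_{n>N}\sum_j\lambda_j^{(n)}a_j^{(n)} \Bigr\|_{U^p}
  \le \sum_{n>N}\sum_j|\lambda_j^{(n)}|
  \le \sum_{n>N}\bigl(\|v_n\|_{U^p}+2^{-n}\bigr)\to 0
\end{equation*}
as $N\to\infty$. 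Hence $\sum_n v_n=u$ in $U^p$, so $U^p$ is a Banach space. Finally, \ref{it:u_c} is then immediate: by \ref{it:u_emb} convergence in $U^p$ implies uniform convergence in $L^2$, under which continuity is preserved, so $U^p_c$ is a closed subspace of the Banach space $U^p$, hence itself Banach. The one place demanding care is the bookkeeping in the concatenation argument for completeness---verifying that the regrouped atomic series converges to the intended element---though this is routine once the uniform atom bound $\|a_j\|_{L^\infty L^2}\le 1$ is available.
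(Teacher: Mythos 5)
Your proof is correct and follows essentially the same route as the paper: a uniform $L^\infty$ bound on atoms plus $\ell^p\subset\ell^q$ gives the embeddings, and uniform convergence of the tails of the atomic series yields right-continuity, the limits at $\pm\infty$, and closedness of $U^p_c$. The only place you add substantive detail is Part (i), where you supply the standard absolutely-convergent-series argument for completeness, which the paper dismisses as ``straightforward''; your concatenation/regrouping of atomic decompositions is exactly the intended filling-in of that gap.
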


\begin{proof}
  Part~\ref{it:u_banach} is straightforward. The embedding $U^p\subset
  U^q$ follows from $\ell^p(\N) \subset \ell^q(\N)$. $U^q\subset
  L^\infty(\R;L^2)$ (including the norm estimate) is obvious for
  atoms, hence also for general $u\in U^q$, and Part~\ref{it:u_emb}
  follows.  This also proves that convergence in $U^q$ implies uniform
  convergence, hence Part~\ref{it:u_c}.  The right-continuity of
  Part~\ref{it:u_right_cont}
  now follows from the definition of atoms. It
  remains to prove~\ref{it:u_limits}: Let $u=\sum_n\lambda_n a_n$ and
  $\eps>0$. There is $n_0\in \N$ such that $\sum_{n\geq n_0+1}
  |\lambda_n|<\eps$. On the one hand, there exists $T_-<0$ such that
  $a_n(t)=0$ for all $t<T_-$, $n=1,\ldots,n_0$, which shows
  $\|u(t)\|_{L^2}<\eps$ for $t<T_-$.  On the other hand, there exists
  $T_+>0$ such that $a_n(t)=a_n(t')$ for all $t,t'>T_+$,
  $n=1,\ldots,n_0$, which implies $\|u(t)-u(t')\|_{L^2}<2\eps$ for
  $t,t'>T_+$.
\end{proof}

The following spaces were introduced by N.~Wiener \cite{Wiener24}.

\begin{definition}\label{def:v}
  Let $1\leq p<\infty$. We define $V^p$ as the normed space of all
  functions $v:\R\to L^2$ such that $v(\infty):=\lim_{t\to
    \infty}v(t)=0$ and $v(-\infty)$ exists and for which the norm
  \begin{equation}\label{eq:hom_norm_v}
    \|v\|_{V^p}:=\sup_{\{t_k\}_{k=0}^K \in \mathcal{Z}}
    \left(\sum_{k=1}^{K}
      \|v(t_{k})-v(t_{k-1})\|_{L^2}^p\right)^{\frac{1}{p}}
  \end{equation}
  is finite.  Likewise, let $V^p_-$ denote the normed space of all
  functions $v:\R\to L^2$ such that $v(-\infty)=0$, $v(\infty)$
  exists, and $\|v\|_{V^p} < \infty$, endowed with the norm
  \eqref{eq:hom_norm_v}.
\end{definition}

\begin{prop}\label{prop:v}Let $1\leq p<q <\infty$.
  \begin{enumerate}
  \item\label{it:v_limits} Let $v:\R\to L^2$ be such that
$$\|v\|_{V_0^p}:=\sup_{\{t_k\}_{k=0}^K \in \mathcal{Z}_0}
\left(\sum_{k=1}^{K}
  \|v(t_{k})-v(t_{k-1})\|_{L^2}^p\right)^{\frac{1}{p}}
$$
is finite. Then, it follows that $v(t_0^+):=\lim_{t \downarrow
  t_0}v(t)$ exists for all $t_0\in [-\infty,\infty)$ and
$v(t_0^-):=\lim_{t \uparrow t_0}v(t)$ exists for all $t_0\in
(-\infty,\infty]$ and moreover,
$$
\|v\|_{V^p}=\|v\|_{V_0^p}.
$$
\item\label{it:v_spaces} We define the closed subspace $V^p_{rc}$
  ($V^p_{-,rc}$) of all right-continuous $V^p$ functions ($V^p_-$
  functions).  The spaces $V^p$, $V^p_{rc}$, $V^p_-$ and $V^p_{-,rc}$
  are Banach spaces.
\item\label{it:v_emb1} The embedding $U^p\subset V_{-,rc}^p$ is
  continuous.
\item\label{it:v_emb2} The embeddings $V^p\subset V^q$ and
  $V^p_-\subset V^q_-$ are continuous.
\end{enumerate}
\end{prop}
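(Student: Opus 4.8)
The plan is to prove the four parts in order, with Part~\ref{it:v_limits} carrying the analytic content and Parts~\ref{it:v_spaces}--\ref{it:v_emb2} following by soft arguments together with the elementary inequalities $\ell^p\subset\ell^q$ and $(a+b)^p\le 2^{p-1}(a^p+b^p)$.

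For Part~\ref{it:v_limits} I would run the classical argument that finite $p$-variation forces one-sided limits. Suppose $\|v\|_{V_0^p}<\infty$ but $v(t_0^+)$ does not exist for some $t_0\in[-\infty,\infty)$. Since $L^2$ is complete, $v$ then violates the Cauchy criterion as $t\downarrow t_0$: there is $\eps>0$ such that every right neighbourhood of $t_0$ contains two points at which $v$ differs by more than $\eps$. Iterating this inside successively smaller neighbourhoods yields, for every $N$, finitely many points $t_0<s_N<s_N'<\dots<s_1<s_1'$ with $\|v(s_j')-v(s_j)\|_{L^2}>\eps$; the resulting partition lies in $\mathcal Z_0$ and gives $\|v\|_{V_0^p}^p>N\eps^p$, a contradiction. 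The same reasoning from the left produces $v(t_0^-)$; in particular $v(-\infty)$ and $v(\infty)$ exist, and a two-point partition bounds the oscillation of $v$ by $\|v\|_{V_0^p}$, so $v$ is bounded. For the identity $\|v\|_{V^p}=\|v\|_{V_0^p}$, the inequality ``$\ge$'' is immediate: adjoining $\pm\infty$ to any $\mathcal Z_0$-partition turns it into a $\mathcal Z$-partition and only adds two nonnegative terms. For ``$\le$'', given a $\mathcal Z$-partition I would replace the values $v(-\infty),v(\infty)$ by $v(s),v(s')$ with $s$ very negative and $s'$ very large; because those limits exist, $v$ is bounded, and the partition sum has only finitely many terms, the incurred error is $o(1)$, so $\|v\|_{V^p}\le\|v\|_{V_0^p}$.

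For Part~\ref{it:v_spaces} the key preliminary observation is that every $v$ in any of the four spaces satisfies $\sup_t\|v(t)\|_{L^2}\le\|v\|_{V^p}$: apply a three-point partition through $t$ and $\pm\infty$ and use $v(\infty)=0$ (resp.\ $v(-\infty)=0$ on the $V^p_-$ scale). Hence $V^p$-convergence implies uniform convergence, so a uniform limit of right-continuous functions is right-continuous and $V^p_{rc}$, $V^p_{-,rc}$ are closed subspaces. Completeness is then the usual argument: a $V^p$-Cauchy sequence $v_n$ converges uniformly to some $v$; for each fixed finite partition the $p$-variation sum of $v$ is the limit of those of $v_n$ and hence $\le\sup_n\|v_n\|_{V^p}<\infty$, so $v\in V^p$; the boundary conditions ($v(\infty)=0$, existence or vanishing of $v(-\infty)$, etc.) pass to the uniform limit; and letting $m\to\infty$ inside the finite partition sums in $\|v_n-v_m\|_{V^p}<\eps$ gives $\|v_n-v\|_{V^p}\le\eps$. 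A closed subspace of a Banach space is Banach, so all four spaces are complete.

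For Part~\ref{it:v_emb1} it suffices to bound the $V^p$-norm of a single $U^p$-atom $a=\sum_{k=1}^K\mathds 1_{[t_{k-1},t_k)}\phi_{k-1}$. Such $a$ is a right-continuous step function with $a(-\infty)=\phi_0=0$ and $a(\infty)=\phi_{K-1}$, so $a\in V^p_{-,rc}$; in evaluating $\|a\|_{V^p}$ partition points lying in the same step contribute $0$ and may be dropped, so the triangle inequality, convexity of $t\mapsto t^p$, and the normalisation $\sum_{k=0}^{K-1}\|\phi_k\|_{L^2}^p=1$ give $\|a\|_{V^p}^p\le 2^p$. By subadditivity of $\|\cdot\|_{V^p}$ and its lower semicontinuity under uniform convergence (it is a supremum of functionals continuous for the uniform norm), $\|u\|_{V^p}\le\sum_j|\lambda_j|\,\|a_j\|_{V^p}\le 2\sum_j|\lambda_j|$ for every atomic decomposition of $u$, whence $\|u\|_{V^p}\le 2\|u\|_{U^p}$; the remaining regularity of $u$ is Proposition~\ref{prop:u}, Parts~\ref{it:u_right_cont} and~\ref{it:u_limits}. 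Part~\ref{it:v_emb2} is then immediate from $\ell^p\subset\ell^q$ applied to $\bigl(\|v(t_k)-v(t_{k-1})\|_{L^2}\bigr)_k$ for each partition, the side conditions being preserved since they only involve limits. The \emph{main obstacle} is Part~\ref{it:v_limits}: the iteration producing many disjoint large oscillations near $t_0$ must be arranged so that all chosen points are distinct and merge into one admissible finite partition, and the reduction of the supremum over $\mathcal Z$ to that over $\mathcal Z_0$ must control the finitely many boundary error terms uniformly in the partition; everything else is routine.
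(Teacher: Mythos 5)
Your proposal is correct and follows essentially the same route as the paper: the paper cites Wiener for Part~\ref{it:v_limits} and treats Parts~\ref{it:v_spaces} and~\ref{it:v_emb2} as routine, while you supply the standard details, and your atom computation in Part~\ref{it:v_emb1} (drop partition points within a step, use convexity and the normalisation to get $\|a\|_{V^p}\le 2$, then pass to general $u$ by subadditivity and lower semicontinuity of the $V^p$-seminorm) is the same argument the paper gives.
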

\begin{proof}
  Part~\ref{it:v_limits} essentially can be found in \cite{Wiener24},
  \S 1.  Part~\ref{it:v_spaces} is straightforward, the closedness
  follows from the fact that $V^p$ convergence implies uniform
  convergence.  Now, let us prove Part~\ref{it:v_emb1}: Due to
  Proposition~\ref{prop:u}, Part~\ref{it:u_right_cont} and \ref{it:v_emb2}
  it remains to show the norm estimate and it suffices
  to do so for a $U^p$-atom
  $a=\sum_{k=1}^K\mathds{1}_{[t_{k-1},t_k)}\phi_{k-1}$. Let
  $\{s_j\}_{j=1}^J \in \mathcal{Z}$. Then,
  $a(s_j)-a(s_{j-1})=\phi_{k_j-1}-\phi_{k_{j-1}-1}$, which is zero if
  $k_j=k_{j-1}$. It follows
$$
\sum_{j=1}^J\|a(s_j)-a(s_{j-1})\|_{L^2}^p \leq
2^p\sum_{k=1}^K\|\phi_{k-1}\|_{L^2}^p\leq 2^p,
$$
which implies $\|a\|_{V^p} \leq 2$. Part~\ref{it:v_emb2} is implied by
$\ell^p(\N)\subset \ell^q(\N)$.
\end{proof}

\begin{prop}\label{prop:v_dec}
  Let $v\in V^p_{-,rc}$ such that $\|v\|_{V^p}=1$. For any $n \in
  \N_0$
  \begin{enumerate}
  \item there exists $\mathfrak{t}_n \in \mathcal{Z}$ such that
    $\mathfrak{t}_0 \subset \mathfrak{t}_{1}\subset \ldots$ and
    $\#\mathfrak{t}_n\leq 2^{1+np}$,
  \item there exists a right-continuous step-function $u_n$
    subordinate to $\mathfrak{t}_n$ such that $\sup_t
    \|u_n(t)\|_{L^2}\leq 2^{1-n}$,
  \item there exists a $v_n\in V^p_{-,rc}$ such that $\sup_t
    \|v_n(t)\|_{L^2}\leq 2^{-n}$,
  \item it holds $v_n=u_{n+1}+v_{n+1}$, $u_0=0$, $v_0=v$.
  \end{enumerate}
\end{prop}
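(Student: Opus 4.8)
\emph{Plan of proof.} The plan is to obtain $u_n,v_n$ as successive left-endpoint step approximations of $v$ with respect to a cleverly chosen increasing sequence of partitions. Suppose for a moment that we have partitions $\mathfrak{t}_0=\{-\infty,\infty\}\subset\mathfrak{t}_1\subset\cdots$ in $\mathcal{Z}$ with $\#\mathfrak{t}_n\le 2^{1+np}$ such that on every interval $[a,b)$ cut out by two consecutive points of $\mathfrak{t}_n$ one has $\|v(t)-v(a)\|_{L^2}\le 2^{-n}$ for all $t\in[a,b)$ (for $n=0$ this reads $\|v(t)\|_{L^2}\le\|v\|_{V^p}=1$, which follows by testing the partition $\{-\infty,t,\infty\}$ and using $v(-\infty)=0$). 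Let $\Pi_n v$ be the right-continuous step function equal to $v(a)$ on each such interval $[a,b)$, so that $\Pi_0v\equiv 0$, and put $v_n:=v-\Pi_nv$, $u_0:=0$, and $u_n:=\Pi_nv-\Pi_{n-1}v$ for $n\ge1$. Then $u_n$ is a right-continuous step function subordinate to $\mathfrak{t}_n$ (here $\mathfrak{t}_{n-1}\subset\mathfrak{t}_n$ is used), the telescoping identity $v_n=u_{n+1}+v_{n+1}$ holds with $v_0=v$, and $v_n\in V^p_{-,rc}$ because $v$ is and $\Pi_nv$ is a step function with finitely many jumps. The bound $\sup_t\|v_n(t)\|_{L^2}\le2^{-n}$ is precisely the oscillation estimate for $\mathfrak{t}_n$; and since on an interval $[a,b)$ of $\mathfrak{t}_{n-1}$ the function $u_n$ takes the values $v(s)-v(a)$ with $s\in[a,b)$ ranging over the left endpoints of the subintervals of $\mathfrak{t}_n$, the oscillation estimate for $\mathfrak{t}_{n-1}$ gives $\sup_t\|u_n(t)\|_{L^2}\le 2^{1-n}$.

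Thus everything reduces to constructing the partitions. Set $\mathfrak{t}_0=\{-\infty,\infty\}$, and given $\mathfrak{t}_{n-1}$ refine it inside each of its intervals $[a,b)$ by a greedy selection: let $s_0:=a$ and $s_{i+1}:=\inf\{t\in(s_i,b)\mid\|v(t)-v(s_i)\|_{L^2}\ge2^{-n}\}$, stopping as soon as this set is empty; let $\mathfrak{t}_n$ be $\mathfrak{t}_{n-1}$ together with all the points $s_i$ so produced, over all intervals of $\mathfrak{t}_{n-1}$. Right-continuity of $v$ (hence of $t\mapsto\|v(t)-v(s_i)\|_{L^2}$) forces each selected $s_{i+1}$ to be strictly larger than $s_i$ and to lie in the set above, so $\|v(s_{i+1})-v(s_i)\|_{L^2}\ge2^{-n}$; on the other hand $\|v(t)-v(s_i)\|_{L^2}<2^{-n}$ for $s_i\le t<s_{i+1}$ by definition, and likewise on the final subinterval of each block, where the defining set is empty. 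This is exactly the oscillation estimate for $\mathfrak{t}_n$.

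For the cardinality, observe first that inside a single interval the selection terminates: $N$ selected points insert $N$ consecutive $L^2$-increments of $v$ of size $\ge 2^{-n}$ into the partition $\mathfrak{t}_{n-1}\in\mathcal{Z}$, which forces $N\,2^{-np}\le\|v\|_{V^p}^p=1$; applying the same estimate to all intervals of $\mathfrak{t}_{n-1}$ simultaneously shows that at most $2^{np}$ points are added at step $n$. Hence $\#\mathfrak{t}_n\le\#\mathfrak{t}_{n-1}+2^{np}$, and since $\#\mathfrak{t}_0=2$ and $2^{1+(n-1)p}+2^{np}\le 2^{1+np}$ for $p\ge1$, induction yields $\#\mathfrak{t}_n\le 2^{1+np}$.

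The one point requiring care is the exact cardinality constant $2^{1+np}$: it is essential to refine $\mathfrak{t}_{n-1}$ interval by interval with threshold $2^{-n}$ rather than, say, to intersect with a single global greedy partition of $v$ — the latter would lose a factor $2$ in the oscillation estimate because the left endpoints of the refined intervals need not be greedy points — and one must exploit the normalization $\|v\|_{V^p}=1$ together with $p\ge1$ in the final induction.
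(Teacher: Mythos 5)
Your proof is correct and follows essentially the same approach as the paper's: you build an increasing sequence of partitions $\mathfrak{t}_n$ by the same greedy stopping-time refinement with threshold $2^{-n}$ inside each interval of $\mathfrak{t}_{n-1}$, and the cardinality bound comes from the same $V^p$-norm counting argument. The only (inessential) difference is the bookkeeping: you define $v_n = v - \Pi_n v$ and $u_n = \Pi_n v - \Pi_{n-1} v$ explicitly in terms of the left-endpoint step approximation $\Pi_n v$ of $v$, whereas the paper defines $u_{n+1}$ recursively as the left-endpoint step approximation of $v_n$ subordinate to $\mathfrak{t}_{n+1}$ and sets $v_{n+1} := v_n - u_{n+1}$; a short telescoping shows these coincide.
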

\begin{proof}
  We proceed by induction: For $n=0$ we define
  $\mathfrak{t}_n:=\{-\infty,\infty\}$, $u_0=0$ and $v_0=v$, hence all
  the claims are immediate.  For $n\in \N$ let
  $\mathfrak{t}_n:=\{-\infty=t_{n,0}<\ldots<t_{n,K_n}\}$ and $u_n,v_n$
  be given with the requested properties. Let $k\in
  \{0,\ldots,K_n-1\}$. For $j=0$ we define $t_{n+1,k}^0:=t_{n,k}$. For
  $j\geq 1$ we define
  \begin{equation*}
    t_{n+1,k}^j:=\inf \{t \mid t^{j-1}_{n+1,k} < t \leq t_{n,k+1}:
    \|v(t)-v(t_{n+1,k}^{j-1})\|_{L^2}>2^{-n-1}\}
  \end{equation*}
  if this set is nonempty and $t_{n+1,k}^j:=t_{n,k+1}$ otherwise.
  
  Now, we relabel all these points $\{t_{n+1,k}^j\}_{j,k}$ as
  \begin{equation*}
    -\infty=t_{n+1,0}<\ldots<t_{n+1,K_{n+1}}=\infty
  \end{equation*}
  which defines the partition $\mathfrak{t}_{n+1} \in \mathcal{Z}$.
  We define
  \begin{align*}
    u_{n+1}:=&\sum_{k=1}^{K_{n+1}}
    \mathds{1}_{[t_{n+1,k-1},t_{n+1,k})}v_n(t_{n+1,k-1})\\
    v_{n+1}:=&v_n-u_{n+1}.
  \end{align*}
  For $t\in \R$ there exists $k$ such that $t\in
  [t_{n+1,k-1},t_{n+1,k})$ and it holds $\|v_{n+1}(t)\|_{L^2}\leq
  \|v_n(t)-v_n(t_{n+1,k-1})\|_{L^2}\leq 2^{-n-1}$. Moreover,
  $1=\|v\|_{V^p}^p\geq (\#t_{n+1}-\#t_{n})2^{-(n+1)p}$ and therefore
  $\#t_{n+1}\leq 2^{1+(n+1)p}$.
\end{proof}

\begin{corollary}\label{cor:emb_v_in_u} Let $1\leq p<q <\infty$.
  The embedding $V^p_{-,rc} \subset U^q$ is continuous.
\end{corollary}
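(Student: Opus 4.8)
The plan is to construct a $U^q$-representation of $v$ directly from the greedy decomposition supplied by Proposition~\ref{prop:v_dec}. By the obvious homogeneity of both norms under scalar multiplication it suffices to treat $v\in V^p_{-,rc}$ with $\|v\|_{V^p}=1$ and to establish a bound $\|v\|_{U^q}\leq C(p,q)$; the general estimate $\|v\|_{U^q}\leq C(p,q)\|v\|_{V^p}$ then follows by scaling.

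First I would record the telescoping consequence of Proposition~\ref{prop:v_dec}: since $v_n=u_{n+1}+v_{n+1}$ with $u_0=0$, $v_0=v$, we get $v-\sum_{n=1}^{N}u_n=v_N$ for every $N$, and because $\sup_t\|v_N(t)\|_{L^2}\leq 2^{-N}\to 0$ the partial sums $\sum_{n=1}^{N}u_n$ converge to $v$ uniformly on $\R$. Next, each $u_n$ is a scalar multiple of a $U^q$-atom: writing $u_n=\sum_{k=1}^{K_n}\mathds{1}_{[t_{n,k-1},t_{n,k})}\phi^{(n)}_{k-1}$ for the representation subordinate to $\mathfrak{t}_n\in\mathcal{Z}$, one has $\phi^{(n)}_0=u_n(-\infty)=0$ (as read off from the construction, the first step of each $u_n$ carries the value $v_{n-1}(-\infty)=0$), so with $\lambda_n:=\big(\sum_{k}\|\phi^{(n)}_{k-1}\|_{L^2}^{q}\big)^{1/q}$ we have $u_n=\lambda_n a_n$ with $a_n$ a $U^q$-atom. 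Using $\|\phi^{(n)}_{k-1}\|_{L^2}\leq\sup_t\|u_n(t)\|_{L^2}\leq 2^{1-n}$ and that the number of steps is at most $\#\mathfrak{t}_n-1\leq 2^{1+np}$, we obtain
\[
  \lambda_n^{q}\leq 2^{1+np}\cdot 2^{q(1-n)}=2^{1+q}\,2^{n(p-q)},
  \qquad\text{hence}\qquad
  \lambda_n\leq 2^{(1+q)/q}\,2^{n(p-q)/q}.
\]

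Since $p<q$ the exponent $(p-q)/q$ is negative, so $\sum_{n\geq 1}\lambda_n$ is a convergent geometric series bounded by some constant $C(p,q)$. Consequently $\sum_{n\geq1}\lambda_n a_n$ converges absolutely in the Banach space $U^q$ (Proposition~\ref{prop:u}, Part~\ref{it:u_banach}), to an element of $U^q$ whose norm is at most $C(p,q)$; by the embedding $U^q\subset L^\infty(\R;L^2)$ (Part~\ref{it:u_emb}) this $U^q$-limit coincides with the uniform limit $\sum_{n\geq1}u_n=v$. Therefore $v\in U^q$ with $\|v\|_{U^q}\leq C(p,q)=C(p,q)\|v\|_{V^p}$, which is the asserted continuous embedding. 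The only delicate point is the atom bookkeeping in the middle paragraph — verifying that $\phi^{(n)}_0=0$ and that $\mathfrak{t}_n$ is a genuine partition in $\mathcal{Z}$ so that $a_n$ really is a $U^q$-atom, and counting the steps correctly — but this is precisely the information that Proposition~\ref{prop:v_dec} was designed to provide, so no new idea is needed beyond it.
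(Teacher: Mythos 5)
Your proof is correct and follows the same approach as the paper: decompose $v$ via Proposition~\ref{prop:v_dec} into step-functions $u_n$, bound each $\|u_n\|_{U^q}$ using the step count $\leq 2^{1+np}$ and the sup bound $\leq 2^{1-n}$, and sum the resulting geometric series in the Banach space $U^q$. The paper's version is simply more terse, leaving implicit the telescoping convergence of $\sum u_n$ to $v$, the check that $\phi^{(n)}_0=0$, and the identification of the $U^q$-limit with $v$ via $U^q\subset L^\infty(\R;L^2)$ --- all of which you have spelled out correctly.
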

\begin{proof}
  Let $v\in V^p_{-,rc}$ with $\|v\|_{V^p}=1$.
  Then, by Proposition~\ref{prop:v_dec}
  there exist $\mathfrak{t}_n \in \mathcal{Z}$ with
  $\#\mathfrak{t}_n \leq 2^{1+np}$ and associated step-functions $u_n$
  with $\sup_t \|u_n(t)\|_{L^2}\leq 2^{1-n}$ such that
  $v(t)=\sum_{n=0}^\infty u_n(t)$.  Moreover, $\|u_n\|_{U^q}\leq
  4\cdot 2^{n(\frac{p}{q}-1)}$, hence $\sum_{n}\|u_n\|_{U^q}\leq 4
  (1-2^{\frac{p}{q}-1})^{-1}$.  The claim follows since $U^q$ is a
  Banach space.
\end{proof}

\begin{prop}\label{prop:gen_deriv}
  For $u\in U^p$ and $v\in V^{p'}$ and a partition
  $\mathfrak{t}:=\{t_k\}_{k=0}^K \in \mathcal{Z}$ we define
  \begin{align*}
    B_{\mathfrak{t}}(u,v) &:=\sum_{k=1}^{K} \lb u(t_{k-1}),
    v(t_{k})-v(t_{k-1})\rb
  \end{align*}
  Here, $\lb \cdot,\cdot\rb$ denotes the $L^2$ inner product.  There
  is a unique number $B(u,v)$ with the property that for all $\eps>0$
  there exists $\mathfrak{t} \in \mathcal{Z}$ such that for every
  $\mathfrak{t}'\supset\mathfrak{t} $ it holds
  \begin{equation}\label{eq:conv}
    |B_{\mathfrak{t}'}(u,v)-B(u,v)|<\eps,
  \end{equation}
  and the associated bilinear form
  \begin{equation*}
    B:U^p\times V^{p'}: (u,v)\mapsto B(u,v)
  \end{equation*}
  satisfies the estimate
  \begin{equation}\label{eq:b_est}
    |B(u,v)|\leq \|u\|_{U^p}\|v\|_{V^{p'}}.
  \end{equation}
\end{prop}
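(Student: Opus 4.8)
The plan is to realize $B(u,v)$ as the limit of the net $\{B_{\mathfrak{t}}(u,v)\}_{\mathfrak{t}\in\mathcal{Z}}$ directed by refinement, after reducing all the work to a single $U^p$-atom.

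\emph{Step 1 (a partition-uniform bound).} First I would show
\begin{equation*}
  |B_{\mathfrak{s}}(a,v)|\le \|v\|_{V^{p'}}
\end{equation*}
for every $U^p$-atom $a=\sum_{k=1}^K\mathds{1}_{[t_{k-1},t_k)}\phi_{k-1}$ and every $\mathfrak{s}=\{s_j\}_{j=0}^J\in\mathcal{Z}$. Since $a(s_{j-1})$ is one of $\phi_0,\dots,\phi_{K-1}$ and the active index is nondecreasing in $j$ (it only increases when $s_{j-1}$ crosses some $t_l$), grouping the sum over the maximal runs of $j$ on which this index is constant and telescoping the corresponding $v$-increments gives
\begin{equation*}
  B_{\mathfrak{s}}(a,v)=\sum_{i=1}^I\langle \phi_{m_i},\,v(b_i)-v(a_i)\rangle,\qquad m_1<\dots<m_I,
\end{equation*}
with $-\infty=a_1<b_1=a_2<b_2=\dots=a_I<b_I=\infty$, so that $\{a_1,b_1,\dots,b_I\}\in\mathcal{Z}$. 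Then Hölder's inequality in $i$, together with $\sum_i\|\phi_{m_i}\|_{L^2}^p\le\sum_{k=0}^{K-1}\|\phi_k\|_{L^2}^p=1$ and $\sum_i\|v(b_i)-v(a_i)\|_{L^2}^{p'}\le\|v\|_{V^{p'}}^{p'}$, yields the bound. Summing over an atomic decomposition $u=\sum_j\lambda_ja_j$ and taking the infimum gives $|B_{\mathfrak{s}}(u,v)|\le\|u\|_{U^p}\|v\|_{V^{p'}}$ for every fixed $\mathfrak{s}$; note also that $u\mapsto B_{\mathfrak{s}}(u,v)$ is a well-defined linear functional on $U^p$, depending only on the genuine pointwise values of $u$ in $L^2$ (Proposition~\ref{prop:u}).

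\emph{Step 2 (the net is eventually constant on atoms).} For an atom $a$ with breakpoint set $\mathfrak{t}_a:=\{t_0,\dots,t_K\}$, if $\mathfrak{t}\supset\mathfrak{t}_a$ then $a$ is constant on every interval of $\mathfrak{t}$, and inserting one further point $s^*$ into $\mathfrak{t}$ does not change $B_{\mathfrak{t}}(a,v)$ because the two terms replacing the old one recombine by linearity of $\langle\cdot,\cdot\rangle$. By induction $B_{\mathfrak{t}'}(a,v)=B_{\mathfrak{t}_a}(a,v)=:B(a,v)$ for all $\mathfrak{t}'\supset\mathfrak{t}_a$. For $u=\sum_j\lambda_ja_j\in U^p$ set $B(u,v):=\sum_j\lambda_jB(a_j,v)$, which converges absolutely by Step~1; since for a fixed partition $u\mapsto B_{\mathfrak{t}}(u,v)$ is decomposition-independent, so is $B(u,v)$.

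\emph{Step 3 (convergence, uniqueness, estimates).} Given $\eps>0$ (the case $v=0$ being trivial), choose $N$ with $\sum_{j>N}|\lambda_j|<\eps/(2\|v\|_{V^{p'}})$ and put $\mathfrak{t}:=\bigcup_{j\le N}\mathfrak{t}_{a_j}\in\mathcal{Z}$. For any $\mathfrak{t}'\supset\mathfrak{t}$, Step~2 gives $B_{\mathfrak{t}'}(a_j,v)=B(a_j,v)$ for $j\le N$, so
\begin{equation*}
  |B_{\mathfrak{t}'}(u,v)-B(u,v)|=\Big|\sum_{j>N}\lambda_j\big(B_{\mathfrak{t}'}(a_j,v)-B(a_j,v)\big)\Big|\le 2\|v\|_{V^{p'}}\sum_{j>N}|\lambda_j|<\eps,
\end{equation*}
which is \eqref{eq:conv}; uniqueness of $B(u,v)$ follows since two numbers with this property are both within $\eps$ of $B_{\mathfrak{t}'}(u,v)$ for a common refinement $\mathfrak{t}'$, for every $\eps>0$. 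Bilinearity of $B$ passes to the limit from that of each $B_{\mathfrak{t}}$ (using $\|v+w\|_{V^{p'}}\le\|v\|_{V^{p'}}+\|w\|_{V^{p'}}$ to stay in $V^{p'}$), and \eqref{eq:b_est} follows by passing to the limit over $\mathfrak{t}'$ in the uniform bound of Step~1. I expect the only genuinely delicate point to be the grouping-and-telescoping identity in Step~1 and the verification that the points $a_i,b_i$ it produces form an admissible partition; everything else is soft, the crucial structural input being that $U^p$-atoms are finite step functions, so the relevant nets stabilize after finitely many refinements.
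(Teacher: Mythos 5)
Your proof is correct and takes essentially the same route as the paper's: your grouping-and-telescoping identity in Step~1 is precisely the paper's reduction \eqref{eq:remove}, and the uniform bound on atoms plus H\"older's inequality is identical. The only difference is organizational---you define $B(a,v)$ on atoms explicitly (using that the net $B_{\mathfrak{t}}(a,v)$ stabilizes once $\mathfrak{t}\supset\mathfrak{t}_a$) and then sum the atomic series, whereas the paper works with the truncations $u_n=\sum_{l\le n}\lambda_l a_l$ and runs a Cauchy argument over a sequence of partitions $\mathfrak{t}^{(j)}$; both rely on the same observation that $B_{\mathfrak{t}}$ is unchanged under refinement on a finite step function whose breakpoints are contained in $\mathfrak{t}$.
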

\begin{proof}
  First of all, we note the following: Let
  $\mathfrak{t}=\{t_n\}_{n=0}^N \in \mathcal{Z}$ and let $u$ be a step
  function $u=\sum_{k=1}^{K} \mathds{1}_{[s_{k-1},s_{k})} \phi_{k-1}$
  subordinate to a partition $\mathfrak{s} \in \mathcal{Z}$ (not
  necessarily an atom), with $\phi_0=0$.  For each $t_n\in
  \mathfrak{t}$, $n<N$, there exists $k_n<K$ such that $s_{k_n}\leq
  t_n <s_{k_n+1}$.  Then,
  \begin{equation}\label{eq:remove1}
    B_{\mathfrak{t}}(u,v)=\sum_{n=1}^N \lb \phi_{k_{n-1}}, v(t_{n})-v(t_{n-1})\rb
  \end{equation}
  Now, if for some $n$ it is $k_{n-1}= k_{n}$, then
  \begin{equation*}
    \lb \phi_{k_{n-1}}, v(t_n)-v(t_{n-1})\rb +\lb \phi_{k_{n}}, v(t_{n+1})-v(t_{n})\rb =
    \lb \phi_{k_{n-1}}, v(t_{n+1})-v(t_{n-1})\rb
  \end{equation*}
  which shows that we may remove such $t_n$ from the partition
  $\mathfrak{t}$ which gives rise to a partition
  $\mathfrak{t}^\ast\subset \mathfrak{t}$. In summary, we may write
  \begin{equation}\label{eq:remove}
    B_{\mathfrak{t}}(u,v)
    =\sum_{n=1}^{N^{\ast}} \lb \phi_{k_{n-1}^\ast}, v(t_n^\ast)-v(t_{n-1}^\ast)\rb
  \end{equation}
  where now $0\leq k_0^\ast<\ldots<k^\ast_{N^\ast-1}< K$.

  Let $\mathfrak{t}\in \mathcal{Z}$ be given.  Assume $a$ is a
  $U^p$-atom.  Obviously, \eqref{eq:remove} and H\"older's inequality
  imply
  \begin{equation*}
    |B_{\mathfrak{t}}(a,v)|\leq \|v\|_{V^{p'}},
  \end{equation*}
  for all $v\in V^{p'}$. Hence,
  \begin{equation*}
    |B_{\mathfrak{t}}(u,v)|\leq \|u\|_{U^p}\|v\|_{V^{p'}},
  \end{equation*}
  for all $u\in U^p$ and $v\in V^{p'}$.

  Now, let $u\in U^p$ and $v\in V^{p'}$ and $\eps>0$.  Let
  $u=\sum_{l=1}^\infty \lambda_l a_l$ be an atomic decomposition such
  that $\sum_{l=n+1}^\infty |\lambda_l|<\eps/(2\|v\|_{V^{p'}})$. We
  define the approximating step function $u_n=\sum_{l=1}^n \lambda_l
  a_l$ and let $\mathfrak{t}\in \mathcal{Z}$ be the subordinate
  partition. Then, for all $\mathfrak{t}'\in \mathcal{Z}$ with
  $\mathfrak{t}\subset \mathfrak{t}'$ it follows as in
  \eqref{eq:remove} that
  \begin{align*}
    |B_{\mathfrak{t}'}(u,v)-B_{\mathfrak{t}}(u,v)|\leq &
    |B_{\mathfrak{t}'}(u,v)-B_{\mathfrak{t}'}(u_n,v)|
    +|B_{\mathfrak{t}}(u_n,v)-B_{\mathfrak{t}}(u,v)| \\
    \leq&2\|u-u_n\|_{U^p}\|v\|_{V^{p'}}\\
    \leq& 2 \sum_{l=n+1}^\infty |\lambda_l|\|v\|_{V^{p'}}<\eps.
  \end{align*}
  Therefore, for a given $j\in \N$ there exists $\mathfrak{t}^{(j)}
  \in \mathcal{Z}$ such that for all $\mathfrak{t}'\in \mathcal{Z}$
  with $\mathfrak{t}^{(j)} \subset \mathfrak{t}'$
  \begin{equation*}
    |B_{\mathfrak{t}'}(u,v)-B_{\mathfrak{t}^{(j)}}(u,v)|<2^{-j},
  \end{equation*}
  and with $t'=\mathfrak{t}^{(j)}\cup\mathfrak{t}^{(j+1)}$ it follows
  \begin{equation*}
    |B_{\mathfrak{t}^{(j+1)}}(u,v)-B_{\mathfrak{t}^{(j)}}(u,v)|<2^{1-j}.
  \end{equation*}
  Hence, $\lim_{j\to \infty} B_{\mathfrak{t}^{(j)}}(u,v)=:B(u,v)$
  exists and \eqref{eq:conv} and \eqref{eq:b_est} are satisfied.
  Property \eqref{eq:conv} also implies the uniqueness.
\end{proof}

\begin{theorem}\label{thm:duality}
  Let $1<p<\infty$.We have
  \begin{equation*}
    (U^p)^\ast=V^{p'}
  \end{equation*}
  in the sense that
  \begin{equation}\label{eq:duality}
    T: V^{p'} \to (U^p)^\ast, \; T(v):=B(\cdot,v)
  \end{equation}
  is an isometric isomorphism.
\end{theorem}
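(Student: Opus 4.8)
My plan is to check first that $T$ maps $V^{p'}$ into $(U^p)^\ast$ with $\|T(v)\|_{(U^p)^\ast}\le\|v\|_{V^{p'}}$, then to prove the reverse inequality (which gives injectivity and the isometry), and finally to prove surjectivity by representing an arbitrary $L\in(U^p)^\ast$ in the form $B(\cdot,v)$.

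The bound $\|T(v)\|_{(U^p)^\ast}\le\|v\|_{V^{p'}}$ is immediate from Proposition~\ref{prop:gen_deriv}: $B(\cdot,v)$ is linear on $U^p$ and \eqref{eq:b_est} is exactly the operator-norm estimate. For the matching lower bound, fix $v\in V^{p'}$ and $\eps>0$. By Proposition~\ref{prop:v}, Part~\ref{it:v_limits}, the supremum defining $\|v\|_{V^{p'}}$ may be taken over $\mathcal{Z}_0$, so I choose $\{t_k\}_{k=0}^K\in\mathcal{Z}_0$ with \emph{finite} endpoints and, setting $d_k:=v(t_k)-v(t_{k-1})$, with $\sum_{k=1}^K\|d_k\|_{L^2}^{p'}\ge\|v\|_{V^{p'}}^{p'}-\eps$. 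I then test against the step function $u:=\sum_{k=1}^K\mathds{1}_{[t_{k-1},t_k)}\|d_k\|_{L^2}^{p'-2}d_k$ (the $k$-th summand being $0$ when $d_k=0$). Since $u$ is supported in the bounded interval $[t_0,t_K)$ it is a scalar multiple of a $U^p$-atom, so $\|u\|_{U^p}\le(\sum_k\|d_k\|_{L^2}^{p'})^{1/p}$ (here $(p'-1)p=p'$ is used). As $u$ is subordinate to $\{-\infty,t_0,\dots,t_K,\infty\}\in\mathcal{Z}$, Proposition~\ref{prop:gen_deriv} gives $B(u,v)=\sum_{k=1}^K\langle\|d_k\|_{L^2}^{p'-2}d_k,\,d_k\rangle=\sum_{k=1}^K\|d_k\|_{L^2}^{p'}$. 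Hence $\|T(v)\|_{(U^p)^\ast}\ge |B(u,v)|/\|u\|_{U^p}\ge(\sum_k\|d_k\|_{L^2}^{p'})^{1/p'}\ge(\|v\|_{V^{p'}}^{p'}-\eps)^{1/p'}$, and $\eps\downarrow0$ finishes the isometry; in particular $T$ is injective.

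For surjectivity let $L\in(U^p)^\ast$. For each $t\in\R$ the map $\phi\mapsto L(\mathds{1}_{[t,\infty)}\phi)$ is a bounded linear functional on $L^2$ of norm $\le\|L\|$ (since $\mathds{1}_{[t,\infty)}\phi\in U^p$ with $\|\mathds{1}_{[t,\infty)}\phi\|_{U^p}\le\|\phi\|_{L^2}$), so the Riesz representation theorem yields $v(t)\in L^2$, $\|v(t)\|_{L^2}\le\|L\|$, with $\langle\phi,v(t)\rangle=-L(\mathds{1}_{[t,\infty)}\phi)$ for all $\phi$. To see $v\in V^{p'}$: for $\{t_k\}_{k=0}^K\in\mathcal{Z}_0$ and $d_k:=v(t_k)-v(t_{k-1})$, the choice $\psi_k:=\|d_k\|_{L^2}^{p'-2}d_k$ gives $\sum_k\|d_k\|_{L^2}^{p'}=\sum_k\langle\psi_k,d_k\rangle=L\big(\sum_k\mathds{1}_{[t_{k-1},t_k)}\psi_k\big)\le\|L\|\big(\sum_k\|\psi_k\|_{L^2}^p\big)^{1/p}=\|L\|\big(\sum_k\|d_k\|_{L^2}^{p'}\big)^{1/p}$, so $\|v\|_{V_0^{p'}}\le\|L\|$; Proposition~\ref{prop:v}, Part~\ref{it:v_limits}, then provides the one-sided limits and $\|v\|_{V^{p'}}=\|v\|_{V_0^{p'}}$, after which one reads off the boundary values so that $v$ lies in $V^{p'}$. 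Finally, $T(v)=L$ is verified on the dense set of step functions $u=\sum_{k=1}^K\mathds{1}_{[t_{k-1},t_k)}\phi_{k-1}$ with $\phi_0=0$: rewriting $u=\sum_{i=1}^{K-1}\mathds{1}_{[t_i,\infty)}(\phi_i-\phi_{i-1})$ and Abel-summing in $B(u,v)=\sum_{k}\langle\phi_{k-1},v(t_k)-v(t_{k-1})\rangle$ (using $\phi_0=0$ and $v(\infty)=0$) produces $B(u,v)=-\sum_{i=1}^{K-1}\langle\phi_i-\phi_{i-1},v(t_i)\rangle=\sum_{i=1}^{K-1}L(\mathds{1}_{[t_i,\infty)}(\phi_i-\phi_{i-1}))=L(u)$, and continuity of both functionals extends the identity to all of $U^p$.

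I expect the surjectivity step to be the crux, and within it the verification that the Riesz-constructed $v$ genuinely lies in $V^{p'}$: the finite $p'$-variation bound is the substantive point (it is where the $\ell^p$--$\ell^{p'}$ duality enters, via the choice of $\psi_k$, together with Proposition~\ref{prop:v}, Part~\ref{it:v_limits}), while the correct behaviour at $\pm\infty$ has to be extracted from the defining relation $\langle\phi,v(t)\rangle=-L(\mathds{1}_{[t,\infty)}\phi)$. The remaining ingredients — density of step functions in $U^p$, the Abel-summation bookkeeping, and the atomic norm estimates — are routine.
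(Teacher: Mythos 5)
Your overall route is the same as the paper's: represent $L\in(U^p)^\ast$ pointwise by Riesz, establish the $V_0^{p'}$-bound by testing against a suitably normalized step function, invoke Proposition~\ref{prop:v}, Part~\ref{it:v_limits}, and verify $T(v)=L$ on step functions by summation by parts and density. Your added direct proof of $\|T(v)\|_{(U^p)^\ast}\ge\|v\|_{V^{p'}}$ is a pleasant refinement: the paper only asserts that producing, for each $L$, some preimage with $\|v\|_{V^{p'}}\le\|L\|$ suffices, leaving injectivity implicit, whereas your argument makes both the injectivity and the exact isometry explicit by testing against the same kind of step function that appears later in the surjectivity step. That much is sound.

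The genuine gap is hidden in the single clause ``after which one reads off the boundary values so that $v$ lies in $V^{p'}$.'' Under Definition~\ref{def:v} as stated, membership in $V^{p'}$ requires $v(\infty):=\lim_{t\to\infty}v(t)=0$, and the Riesz-constructed $v$ simply does not satisfy this in general: for the bounded functional $L(u)=\lb u(\infty),\psi\rb$ (bounded since $\|u(\infty)\|_{L^2}\le\|u\|_{U^p}$), one finds $\lb\phi,v(t)\rb=-L(\mathds{1}_{[t,\infty)}\phi)=-\lb\phi,\psi\rb$ for every finite $t$, so $\lim_{t\to\infty}v(t)=-\psi\ne0$. You therefore cannot simply ``read off'' the boundary values and land in $V^{p'}$. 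The apparent escape, replacing $v$ by $v-\lim_{t\to\infty}v(t)$, restores $v\in V^{p'}$ but destroys your identity $\lb\phi,v(t)\rb=-L(\mathds{1}_{[t,\infty)}\phi)$, and then your Abel summation (which uses $v(\infty)=0$ \emph{and} that identity at each $t_i$) no longer produces $L(u)$: the last summand picks up an uncontrolled term $\lb\phi_{K-1},\lim_{t\to\infty}v(t)\rb$. This is exactly the defect the paper itself had and corrects in its Erratum, where the definition of $V^p$ is changed so that $v(\infty)=0$ is a \emph{convention} used in the norm and in $B_{\mathfrak{t}}$, not a requirement that the limit at $+\infty$ vanish. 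With that adjusted definition your proof (which, note, does \emph{not} subtract $\widetilde{v}(\infty)$, and so is closer to the corrected version than the originally printed proof) goes through; as written against Definition~\ref{def:v}, it does not.
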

\begin{proof}
  In view of \eqref{eq:b_est} it suffices to show that for each $L\in
  (U^p)^\ast$ there is $v\in V^{p'}$ such that $T(v)(u)=L(u)$ and
  $\|v\|_{V^{p'}}\leq \|L\|$.  To this end, let $0\not=L\in
  (U^p)^\ast$. For $t$ fixed we have $\phi \mapsto - L(
  \mathds{1}_{[t,\infty)}\phi) \in (L^2)^\ast$, hence there exists
  $\widetilde{v}(t)\in L^2$ such that
  $L(\mathds{1}_{[t,\infty)}\phi)=-\lb \phi,\widetilde{v}(t)\rb$ for
  all $\phi \in L^2$.  Fix a partition $\{t_k\}_{k=0}^K \in
  \mathcal{Z}_0$ and define $u:=\sum_{k=1}^K
  \mathds{1}_{[t_{k-1},t_k)}\phi_{k-1}$ with
  \begin{equation*}
    \phi_{k-1}:=\frac{(\widetilde{v}(t_k)-\widetilde{v}(t_{k-1}))
      \|\widetilde{v}(t_k)-\widetilde{v}(t_{k-1})\|_{L^2}^{p'-2}}{
      \left(\sum_{k=1}^{K}\|\widetilde{v}(t_k)-\widetilde{v}(t_{k-1})
        \|^{p'}_{L^2}\right)^{\frac{1}{p}}}.
  \end{equation*}
  Then, $\|u\|_{U^p}\leq 1$ and
  \begin{align*}
    \|L\|&\geq \left|\sum_{k=1}^K
      L(\mathds{1}_{[t_{k-1},t_k)}\phi_{k-1}) \right|
    =\left|\sum_{k=1}^K L(
      \mathds{1}_{[t_{k-1},\infty)}\phi_{k-1})-L(\mathds{1}_{[t_{k},\infty)}\phi_{k-1}
      )\right|\\
    &=\left| \sum_{k=1}^K \lb \phi_{k-1},
      \widetilde{v}(t_{k})-\widetilde{v}(t_{k-1}) \rb \right|
    =\left(\sum_{k=1}^K
      \|\widetilde{v}(t_k)-\widetilde{v}(t_{k-1})\|_{L^2}^{p'}\right)^{\frac{1}{p'}},
  \end{align*}
  which shows that $ \|\widetilde{v}\|_{V_0^{p'}}\leq \|L\| $ and that
  $\lim_{s\to \pm \infty}\widetilde{v}(s)$ exists due to Proposition~\ref{prop:v},
  Part~\ref{it:v_limits}. For
  $v(t):=\widetilde{v}(t)-\widetilde{v}(\infty)$ it follows $v\in
  V^{p'}$ and
  \begin{equation*}
    \|v\|_{V^{p'}}\leq \|L\|.
  \end{equation*}
  It remains to show that $T(v)(u)=L(u)$ for all $u\in U^{p}$: For a
  step function $u=\sum_{k=1}^{K}
  \mathds{1}_{[t_{k-1},t_{k})}\phi_{k-1}$ with underlying partition
  $\mathfrak{t}$ we have
  \begin{align*}
    T(v)(u)=& B_{\mathfrak{t}}(u,v)=
    \sum_{k=1}^K \lb \phi_{k-1} , v(t_{k})-v(t_{k-1})\rb\\
    =&\sum_{k=1}^K L(\mathds{1}_{[t_{k-1},t_{k})} \phi_{k-1})=L(u)
  \end{align*}
  and the claim follows by density and \eqref{eq:b_est}.
\end{proof}

\begin{prop}\label{prop:b0}
  For $1<p<\infty$ let $u\in U^p$ be continuous and $v,v^\ast\in
  V^{p'}$ such that $v(s)=v^\ast(s)$ except for at most countably many
  points.  Then,
  \begin{equation*}
    B(u,v)=B(u,v^\ast).
  \end{equation*}
\end{prop}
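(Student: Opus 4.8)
The plan is to reduce to showing that $B(u,w)=0$ for $w:=v-v^\ast$. This $w$ lies in $V^{p'}$ (a vector space) and vanishes outside a countable set $S=\{s_i\}_{i\in\N}\subset\R$, and the reduction uses only the bilinearity of $B$ asserted in Proposition~\ref{prop:gen_deriv}. First I would record a discrete integration-by-parts identity for $B_{\mathfrak{t}}(u,w)$. Since $u(-\infty)=0$ by Proposition~\ref{prop:u}, Part~\ref{it:u_limits}, while $w(\infty)=0$ by the definition of $V^{p'}$ and $w(-\infty)=0$ because $w$ vanishes off the countable set $S$ (so $w$ is $0$ at arbitrarily large negative reals, forcing the limit at $-\infty$ to be $0$), rearranging the finite sum defining $B_{\mathfrak{t}}$ gives, for every $\mathfrak{t}=\{t_k\}_{k=0}^{K}\in\mathcal{Z}$,
\[
  B_{\mathfrak{t}}(u,w)=-\sum_{k=1}^{K-1}\langle u(t_k)-u(t_{k-1}),\,w(t_k)\rangle.
\]
Since $w(t_k)=0$ whenever $t_k\notin S$, only the finitely many partition points lying in $S$ contribute to the right-hand side.

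Next I would invoke the characterising property of $B$ from Proposition~\ref{prop:gen_deriv}: given $\eps>0$, choose $\mathfrak{t}^{(0)}\in\mathcal{Z}$ such that $|B_{\mathfrak{t}'}(u,w)-B(u,w)|<\eps$ for every refinement $\mathfrak{t}'\supset\mathfrak{t}^{(0)}$, and then exhibit one explicit such $\mathfrak{t}'$ with $|B_{\mathfrak{t}'}(u,w)|<\eps$. To construct it, list the finitely many points $s_{j_1}<\dots<s_{j_l}$ of $\mathfrak{t}^{(0)}$ lying in $S$, and for each use that $u$ is continuous, hence left-continuous at $s_{j_q}$, together with the uncountability of the interval between $s_{j_q}$ and its predecessor in $\mathfrak{t}^{(0)}$, to select a point $r_q$ just to the left of $s_{j_q}$, outside $S$, with $\|u(s_{j_q})-u(r_q)\|_{L^2}$ small enough that $\|u(s_{j_q})-u(r_q)\|_{L^2}\|w(s_{j_q})\|_{L^2}<\eps/l$. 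Put $\mathfrak{t}':=\mathfrak{t}^{(0)}\cup\{r_1,\dots,r_l\}$. Then $\mathfrak{t}'\cap S=\{s_{j_1},\dots,s_{j_l}\}$ since the $r_q$ are not in $S$, and $r_q$ is the predecessor of $s_{j_q}$ in $\mathfrak{t}'$ because the intervals between the $s_{j_q}$ and their $\mathfrak{t}^{(0)}$-predecessors are pairwise disjoint; hence the identity above yields $|B_{\mathfrak{t}'}(u,w)|\le\sum_{q=1}^{l}\|u(s_{j_q})-u(r_q)\|_{L^2}\|w(s_{j_q})\|_{L^2}<\eps$. Therefore $|B(u,w)|<2\eps$, and letting $\eps\downarrow 0$ gives $B(u,w)=0$, i.e.\ $B(u,v)=B(u,v^\ast)$.

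The one place the continuity hypothesis on $u$ enters — and where it is genuinely needed — is this last step: for a merely right-continuous $u\in U^p$ one would only control the left limit $u(s_{j_q}^{-})$, which may differ from $u(s_{j_q})$, so the corresponding term would not vanish. I do not expect a real obstacle; the only mildly fiddly points are the endpoint bookkeeping in the integration-by-parts identity (using $u(t_0)=0$ and $w(t_0)=w(t_K)=0$) and the verification that inserting the $r_q$ makes them the predecessors of the $s_{j_q}$. In particular no summability of $(\|w(s_i)\|_{L^2})_{i}$ is needed, since for the fixed base partition $\mathfrak{t}^{(0)}$ only finitely many points of $S$ are ever in play.
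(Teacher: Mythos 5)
Your proposal is correct and follows essentially the same route as the paper: reduce to $B(u,w)=0$ for $w=v-v^\ast$, use summation by parts to rewrite $B_{\mathfrak{t}'}(u,w)$ as $-\sum_k \langle u(t_k)-u(t_{k-1}),w(t_k)\rangle$, and then refine the partition $\mathfrak{t}^{(0)}$ from Proposition~\ref{prop:gen_deriv} by inserting, just to the left of each partition point where $w$ may be nonzero, a point at which $w$ vanishes (possible since $S$ is countable) and at which $u$ is close in norm by continuity. The only cosmetic difference is that the paper inserts such a point $t_k^\ast$ before every interior node $t_k$ using a single modulus of continuity $\delta$, whereas you insert $r_q$ only before the finitely many nodes lying in $S$ — this changes nothing since the other terms vanish anyway.
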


\begin{proof}
  For $w:=v-v^\ast$ it holds that $w(s)=0$ except for at most
  countably many points. We have to show that $B(u,w)=0$.  We may
  assume $\|u\|_{U^p}=\|w\|_{V^{p'}}= 1$.  For $\eps>0$ there exists
  $\mathfrak{t}=\{t_k\}_{k=0}^K \in \mathcal{Z}$ such that for every
  $\mathfrak{t}'\supset \mathfrak{t}$:
  \begin{equation*}
    |B_{\mathfrak{t}'}(u,w)-B(u,w)|<\eps.
  \end{equation*}
  Since $u$ is continuous, there exists $\delta>0$ such that for all
  $k\in \{1,\ldots,K-1\}$ and $s\in (t_k-\delta,t_k)$ it holds
  $\|u(s)-u(t_k)\|_{L^2}<\frac\eps{K}$.  For all
  $k\in\{1,\ldots,K-1\}$ we choose $t_k^\ast\in (t_k-\delta,t_k)$ such
  that $t_k^\ast>t_{k-1}$ and $w(t_k^\ast)=0$ and set
  \begin{equation*}
    \mathfrak{t}'=\mathfrak{t}\cup \{t_1^\ast,\ldots,t_{K-1}^\ast\}.
  \end{equation*}
  Summation by parts yields
  \begin{equation*}
    B_{\mathfrak{t}'}(u,w) = \sum_{k=1}^{K-1}\lb u(t_k^\ast)-u(t_k),w(t_k)\rb.
  \end{equation*}
  Hence, $|B(u,w)|<|B_{\mathfrak{t}'}(u,w)|+\eps<2\eps$.
\end{proof}

\begin{prop}\label{prop:c1} Let $1<p<\infty$, $u\in V^1_-$ be absolutely 
  continuous on compact intervals and $v\in V^{p'}$. Then,
  \begin{equation}\label{eq:c1}
    B(u,v)=-\int_{-\infty}^\infty \lb u'(t),v(t)\rb dt.
  \end{equation}
\end{prop}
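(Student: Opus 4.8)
The plan is to reduce the identity, by a density argument in $U^p$, to the case of a piecewise linear $u$ with compactly supported derivative, for which $B_{\mathfrak t}(u,v)$ becomes (an affine function of) right-endpoint Riemann sums of $v$ that can be handled directly.

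\textbf{Step 1 (preliminaries).} First I would record that $u'\in L^1(\R;L^2)$ and $u(t)=\int_{-\infty}^t u'(s)\,ds$: on each interval $[-N,N]$ absolute continuity gives $u(t)-u(-N)=\int_{-N}^t u'$ and $\int_{-N}^N\|u'(s)\|_{L^2}\,ds$ equals the variation of $u$ on $[-N,N]$, which is $\leq\|u\|_{V^1}$; letting $N\to\infty$ and using $u(-\infty)=0$ gives the assertion. In particular $u$ is continuous, hence $u\in V^1_{-,rc}$, so Corollary~\ref{cor:emb_v_in_u} (applied with exponents $1<p$) yields $u\in U^p$, and $B(u,v)$ is defined. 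I also note $\|v\|_{L^\infty(\R;L^2)}\leq\|v\|_{V^{p'}}$ and $v(\infty)=0$ (Definition~\ref{def:v}), so that the right-hand side of \eqref{eq:c1} is an absolutely convergent integral.

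\textbf{Step 2 (approximation).} Choose step functions $g_n=\sum_{j=1}^{J_n}\mathds{1}_{[r_{j-1},r_j)}\psi_j$ (finitely many pieces, $r_0<\dots<r_{J_n}$ real, $\psi_j\in L^2$) with $\|g_n-u'\|_{L^1(\R;L^2)}\to 0$, and set $u^{(n)}(t):=\int_{-\infty}^t g_n(s)\,ds$. Then $u^{(n)}$ is continuous, piecewise linear, lies in $V^1_{-,rc}$, and $u^{(n)}-u\in V^1_{-,rc}$ with $\|u^{(n)}-u\|_{V^1}\leq\|g_n-u'\|_{L^1}$; hence, again by Corollary~\ref{cor:emb_v_in_u}, $\|u^{(n)}-u\|_{U^p}\ls\|g_n-u'\|_{L^1}\to 0$. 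Since $|B(w,v)|\leq\|w\|_{U^p}\|v\|_{V^{p'}}$ by \eqref{eq:b_est}, it follows that $B(u,v)=\lim_n B(u^{(n)},v)$, and it remains to compute $B(u^{(n)},v)$.

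\textbf{Step 3 (identifying $B(u^{(n)},v)$).} Fix $n$ and let $\mathfrak t=\{t_k\}_{k=0}^K\in\mathcal Z$ contain all the points $r_j$. Abel summation in $B_{\mathfrak t}(u^{(n)},v)=\sum_{k=1}^K\lb u^{(n)}(t_{k-1}),v(t_k)-v(t_{k-1})\rb$, together with the boundary values $u^{(n)}(t_0)=u^{(n)}(-\infty)=0$ and $v(t_K)=v(\infty)=0$, gives
\begin{equation*}
  B_{\mathfrak t}(u^{(n)},v)=-\sum_{k=1}^{K-1}\lb u^{(n)}(t_k)-u^{(n)}(t_{k-1}),v(t_k)\rb=-\sum_{j=1}^{J_n}\lb \psi_j,\Sigma_j(\mathfrak t)\rb ,
\end{equation*}
where $\Sigma_j(\mathfrak t):=\sum(t_k-t_{k-1})\,v(t_k)$, the sum running over consecutive pairs $t_{k-1}<t_k$ in $[r_{j-1},r_j]$, is the right-endpoint Riemann sum of $v$ over the partition of $[r_{j-1},r_j]$ induced by $\mathfrak t$ (here I use $u^{(n)}(t_k)-u^{(n)}(t_{k-1})=\int_{t_{k-1}}^{t_k}g_n$ and that $g_n$ is supported in $[r_0,r_{J_n}]$). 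By Proposition~\ref{prop:v}, Part~\ref{it:v_limits}, $v$ has one-sided limits everywhere, so on the compact interval $[r_{j-1},r_j]$ it is bounded with only countably many (jump) discontinuities, hence Riemann integrable; consequently, for every $\eps>0$ there is $\delta>0$ such that $\|\Sigma_j(\mathfrak t)-\int_{r_{j-1}}^{r_j}v\|_{L^2}<\eps$ whenever $\mathfrak t$ has mesh $<\delta$ on $[r_{j-1},r_j]$. Choosing $\mathfrak t_0\supseteq\{r_0,\dots,r_{J_n}\}$ with mesh $<\delta$ on each $[r_{j-1},r_j]$, the same estimate holds for every refinement $\mathfrak t\supseteq\mathfrak t_0$ (adding points only decreases the mesh), so the characterization of $B$ in Proposition~\ref{prop:gen_deriv} identifies the limit:
\begin{equation*}
  B(u^{(n)},v)=-\sum_{j=1}^{J_n}\Big\langle \psi_j,\int_{r_{j-1}}^{r_j}v(s)\,ds\Big\rangle=-\int_{-\infty}^{\infty}\lb g_n(s),v(s)\rb\,ds .
\end{equation*}

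\textbf{Step 4 (conclusion and the main difficulty).} Passing to the limit, $\big|B(u^{(n)},v)+\int_{-\infty}^\infty\lb u'(s),v(s)\rb\,ds\big|=\big|\int_{-\infty}^\infty\lb g_n(s)-u'(s),v(s)\rb\,ds\big|\leq\|g_n-u'\|_{L^1}\|v\|_{L^\infty}\to 0$, which together with Step~2 gives \eqref{eq:c1}. The one genuinely delicate point is Step~3: the value $B(u^{(n)},v)$ must be read off from $B_{\mathfrak t}(u^{(n)},v)$ along the \emph{net} of all refinements of a partition rather than along a single sequence, so one cannot simply appeal to dominated convergence. This is why the partition points are placed in advance so as to make the mesh uniformly small on the (compact) support of $g_n$; the Riemann integrability of $v$ on that compact set, which reduces to $v$ having countably many discontinuities via Proposition~\ref{prop:v}, Part~\ref{it:v_limits}, then does the rest.
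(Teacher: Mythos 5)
Your proof is correct but takes a route dual to the paper's. The paper first reduces to left-continuous $v$ via Proposition~\ref{prop:b0}, then sums by parts over a single well-chosen partition and uses a uniform step-function approximation of the regulated function $v$ together with the Fundamental Theorem of Calculus. You instead approximate $u'$ by $L^1$ step functions, so that $u$ is approximated in $U^p$ by piecewise-linear $u^{(n)}$ (via Corollary~\ref{cor:emb_v_in_u}), and then read off $B(u^{(n)},v)$ from right-endpoint Riemann sums using the refinement criterion of Proposition~\ref{prop:gen_deriv}; the countable exceptional set where $v$ is discontinuous is automatically negligible for the integral, so Proposition~\ref{prop:b0} is never needed. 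Both approaches ultimately exploit that $v$ is regulated. One small imprecision in Step~3: for an $L^2$-valued function, being bounded with a measure-zero set of discontinuities does not by itself imply Riemann integrability (the Lebesgue criterion fails in infinite-dimensional Banach spaces, in particular in Hilbert space); what you actually need, and do have from Proposition~\ref{prop:v}, Part~\ref{it:v_limits}, is that $v$ is regulated, hence uniformly approximable by step functions on each compact interval $[r_{j-1},r_j]$, and the convergence of the right-endpoint Riemann sums $\Sigma_j(\mathfrak{t})$ to the integral of $v$ over $[r_{j-1},r_j]$ as the mesh decreases then follows by a direct comparison with the approximating step function. With that one sentence of justification replaced, the argument is complete.
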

\begin{proof}
  Without loss of generality we may assume
  $\|u\|_{V^1}=\|v\|_{V^{p'}}= 1$.  By Corollary~\ref{cor:emb_v_in_u}
  we have $u\in U^p$, so that the left hand side of \eqref{eq:c1}
  makes sense.  From our assumptions on $u$ it follows that $u'\in
  L^1(\R;L^2)$ with $\|u'\|_{L^1}\leq \|u\|_{V^1}=1$ and that the
  Fundamental Theorem of Calculus is valid (cf.\ for example
  \cite{Federer96}, Corollary~2.9.20 and 2.9.22).  Because $u$ is
  continuous and $v$ is left-continuous except for at most countably
  many points, it suffices by Proposition~\ref{prop:b0} to consider
  left-continuous $v\in V^{p'}$.  For $\eps>0$ there exists
  $\mathfrak{t}=\{t_n\}_{n=0}^{N}\in \mathcal{Z}$ such that for every
  $\mathfrak{t}'\supset \mathfrak{t}$ the estimate \eqref{eq:conv} is
  satisfied.  Furthermore, there exists $T_1\leq t_1$ and $T_2\geq
  t_{N-1}$ such that $\|v(t)-v(T_1)\|_{L^2}<\eps$ for $t\leq T_1$ and
  $\|v(t)\|_{L^2}<\eps$ for $t\geq T_2$.  Since $v$ is a
  left-continuous, regulated function on $[T_1,T_2]$, there exists
  $\mathfrak{t}'=\{t_n'\}_{n=0}^{N'} \supset \mathfrak{t}$ such that
  $t_1'=T_1$ and $t_{N'-1}'= T_2$ and
  \begin{equation*}
    \|v-w\|_{L^\infty} < \eps,
    \text{ for } w:=\sum_{n=1}^{N'-1} v(t'_{n})\mathds{1}_{(t'_{n-1},t'_n]}
  \end{equation*}
  Now, estimate \eqref{eq:conv} and summation by parts yield
  \begin{equation*}
    |-\sum_{n=1}^{N'-1} \lb u(t_{n}')-u(t_{n-1}'),v(t'_n))\rb-B(u,v)| < \eps.
  \end{equation*}
  By the Fundamental Theorem of Calculus and the definition of $w$ we
  have for $n\in \{1,\ldots,N'-1\}$:
  \begin{equation*}
    \lb u(t_{n}')-u(t_{n-1}'),v(t'_n))\rb=\int_{t_{n-1}'}^{t_{n}'}
    \lb u'(s),w(s)\rb ds.
  \end{equation*}
  Altogether, we obtain
  \begin{equation*}
    \left|-\int_{-\infty}^{\infty} \lb u'(s),v(s))\rb ds-B(u,v)\right|
    < \|u'\|_{L^1} \|v-w\|_{L^\infty} + \eps < 2\eps,
  \end{equation*}
  which finishes the proof.
\end{proof}

\begin{remark}\label{rem:sub} For $u\in U^p$ it is
clear that
\begin{equation*}
\|u\|_{U^p}=\sup_{v \in V^{p'}: \|v\|_{V^{p'}=1}} |B(u,v)|
\end{equation*}
by Theorem~\ref{thm:duality}.
Although we will not use it in the sequel, let us remark that
for $u\in V^1_{-}$ which is absolutely continuous on compact
intervals it holds
\begin{equation*}
\|u\|_{U^p}
=\sup_{v \in V^{p'}_c: \|v\|_{V^{p'}=1}} |B(u,v)| \, ,
\end{equation*}
where $V^{p'}_c$ is the set of all
continuous functions in $V^{p'}$ (which is obviously not dense).
This may be seen as follows: By
Proposition~\ref{prop:c1} we may restrict the supremum to $V^{p'}_{rc}$. Then,
we may restrict this further to the dense subset of the
right-continuous step-functions $\mathcal{T}_{rc}$. Finally, we may replace
$\mathcal{T}_{rc}$ by $V^{p'}_c$ by substituting jumps
in a piecewise linear and continuous way with the help of \eqref{eq:c1}.
\end{remark}

\begin{remark}\label{rem:dual2}
For $v \in V^p$ Theorem~\ref{thm:duality} also implies
\begin{equation*}
\|v\|_{V^p}=\sup_{u \text{ $U^{p'}$-atom}}|B(u,v)|
\end{equation*}
for  $1<p<\infty$.
\end{remark}

We will use the convention that capital letters denote dyadic numbers,
e.g. $N=2^n$ for $n \in \Z$ and for a dyadic summation we write
$\sum_N a_N :=\sum_{n \in \Z} a_{2^n}$ and $\sum_{N\geq M} a_N
:=\sum_{n \in \Z: 2^n \geq M} a_{2^n}$ for brevity.  Let $\chi \in
C^\infty_0((-2,2))$ be an even, non-negative function such that
$\chi(t)=1$ for $|t|\leq 1$. We define $\psi(t):=\chi(t)-\chi(2t)$ and
$\psi_N:=\psi(N^{-1}\cdot)$.  Then, $\sum_{N}\psi_N(t)=1$ for $t\neq
0$. We define
\begin{equation*}
  \widehat{Q_N u}:=\psi_N \widehat{u}
\end{equation*}
and $\widehat{Q_0u}=\chi(2\cdot) \widehat{u}$, $Q_{\geq M}=\sum_{N\geq
  M} Q_N$ as well as $Q_{<M}=I-Q_{\geq M}$.

\begin{definition}\label{def:besov}
  Let $s\in \R$ and $1\leq p,q \leq \infty$. We define the semi-norms
  \begin{equation}\label{eq:besov_norm}
    \begin{split}
      \|u\|_{\hb s p q}&:= \left( \sum_{N} N^{qs} \|Q_N
        u\|^q_{L^p(\R;L^2))}
      \right)^{\frac{1}{q}}\quad (q<\infty)\\
      \|u\|_{\hb s p \infty}&:=
      \sup_{N} N^{s} \|Q_N u\|_{L^p(\R;L^2)}\\
    \end{split}
  \end{equation}
  for all $u\in \mathcal{S}'(\R;L^2)$ for which these numbers are
  finite.
\end{definition}

\begin{prop}\label{prop:besov_emb} Let $1< p<\infty$.
  For any $v\in V^p$, the estimate
  \begin{equation}\label{eq:besov_emb1}
    \|v\|_{\hb {\frac1p} p  {\infty}} \ls \|v\|_{V^p}
  \end{equation}
  holds true. Moreover, for any $u \in \mathcal{S}'(\R;L^2)$ such that
  the semi-norm $\|u\|_{\hb {\frac1p} p 1}$ is finite there exists
  $u(\pm \infty)\in L^2$. Then, $u-u(- \infty)\in U^p$ and the
  estimate
  \begin{equation}
    \label{eq:besov_emb2}
    \|u-u(-\infty)\|_{U^p} \ls \|u\|_{\hb {\frac1p} p 1}
  \end{equation}
  holds true.
\end{prop}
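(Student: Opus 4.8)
The plan is to prove the two assertions separately; the second, \eqref{eq:besov_emb2}, is the substantial one.

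\emph{The first estimate \eqref{eq:besov_emb1}.} I would reduce it to the $L^p$ modulus-of-continuity bound $\big\|\,\|v(\cdot+h)-v(\cdot)\|_{L^2}\,\big\|_{L^p(\R)}\ls|h|^{1/p}\|v\|_{V^p}$ for $h\in\R$. Granting this, the identity $\int_\R\check\psi_N=0$ (which holds because $\psi(0)=0$) gives
\begin{equation*}
  Q_Nv(t)=\int_\R\check\psi(r)\big(v(t-r/N)-v(t)\big)\,dr,
\end{equation*}
so Minkowski's inequality and the rapid decay of $\check\psi$ yield $N^{1/p}\|Q_Nv\|_{L^p(\R;L^2)}\ls\|v\|_{V^p}$ uniformly in $N$, i.e.\ \eqref{eq:besov_emb1}. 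For the modulus bound, take $h>0$ and slice $\R$ into $I_j=[jh,(j+1)h)$; for $t\in I_j$ one has $t-h\in I_{j-1}$, and expanding $v(t-h)-v(t)$ as the telescoping sum of $v(t-h)-v((j-1)h)$, $v((j-1)h)-v(jh)$ and $v(jh)-v(t)$, then raising to the power $p$, integrating over $I_j$ and summing in $j$, reduces everything to the two bounds $\sum_j\sup_{s\in I_j}\|v(s)-v(jh)\|_{L^2}^p\le\|v\|_{V^p}^p$ and $\sum_j\|v((j-1)h)-v(jh)\|_{L^2}^p\le\|v\|_{V^p}^p$. Each is obtained by testing the $V^p$-norm against a suitable partition: for the first, one picks near-maximisers $t_j\in I_j$ and uses the partition $\dots<jh<t_j<(j+1)h<\dots$; for the second, the grid $\{jh\}_j$, truncated and completed by $\pm\infty$.

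\emph{Reduction of \eqref{eq:besov_emb2}.} Write $u=\sum_NQ_Nu$ and put $u_N:=Q_Nu\in L^p(\R;L^2)$, with $\widehat{u_N}$ supported in an annulus $\{|\tau|\sim N\}$. Being the convolution of itself with a fattened Schwartz kernel, $u_N$ is continuous, $u_N(\pm\infty)=0$, and $\|u_N'\|_{L^p}\ls N\|u_N\|_{L^p}$ (Bernstein). The heart of the matter is the \emph{Claim}: if $\widehat f$ is supported in $\{|\tau|\le 2N\}$ and $f\in L^p(\R;L^2)$ is continuous with $f(\pm\infty)=0$, then $f\in U^p$ and $\|f\|_{U^p}\ls N^{1/p}\|f\|_{L^p(\R;L^2)}$. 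Given the Claim, $\sum_N\|u_N\|_{U^p}\ls\sum_NN^{1/p}\|u_N\|_{L^p}=\|u\|_{\hb{1/p}p1}<\infty$, so $g:=\sum_Nu_N$ converges in the Banach space $U^p$ with $\|g\|_{U^p}\ls\|u\|_{\hb{1/p}p1}$; since $\sum_N\psi_N\equiv1$ off the origin, $u-g$ is supported at frequency $0$, and identifying $u$ with $g$ gives that $u(\pm\infty)=g(\pm\infty)$ exist with $g(-\infty)=0$ (Proposition~\ref{prop:u}) and $u-u(-\infty)=g\in U^p$ with the asserted bound.

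\emph{Proof of the Claim.} Choose $\eta\in C_0^\infty$ with $\supp\eta\subset(-R,R)$ and $\sum_{k\in\Z}\eta(\cdot-k)\equiv1$, and set $f_k:=\eta(N\cdot-k)f$, supported in $I_k:=[(k-R)/N,(k+R)/N]$ and vanishing at the endpoints of $I_k$; each $f_k$ is continuous and absolutely continuous, hence $f_k\in V^1_{-,rc}$ with $\|f_k\|_{V^1}=\int_{I_k}\|f_k'\|_{L^2}\ls N^{-1/p'}\big(\|f'\|_{L^p(I_k)}+N\|f\|_{L^p(I_k)}\big)$ by H\"older, so $\|f_k\|_{U^p}\ls\|f_k\|_{V^1}$ by Corollary~\ref{cor:emb_v_in_u}. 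The $I_k$ have bounded overlap, and Bernstein's inequality gives
\begin{equation*}
  \sum_k\|f_k\|_{U^p}^p\ls N^{1-p}\big(\|f'\|_{L^p}^p+N^p\|f\|_{L^p}^p\big)\ls N\|f\|_{L^p}^p .
\end{equation*}
Summing $f=\sum_kf_k$ by the triangle inequality in $U^p$ would only give an $\ell^1$ sum of the $\|f_k\|_{U^p}$, which need not converge; instead I would group the indices into $O(R)$ residue classes on which the supports $I_k$ are pairwise disjoint and ordered, and invoke the superposition principle
\begin{equation}\label{eq:superpos}
  \Big\|\sum_kg_k\Big\|_{U^p}\le\Big(\sum_k\|g_k\|_{U^p}^p\Big)^{1/p}\qquad\text{for }g_k\in U^p\text{ with pairwise disjoint, ordered supports.}
\end{equation}
Summing \eqref{eq:superpos} over the $O(R)$ classes yields $\|f\|_{U^p}\ls\big(\sum_k\|f_k\|_{U^p}^p\big)^{1/p}\ls N^{1/p}\|f\|_{L^p}$. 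To prove \eqref{eq:superpos} for finitely many $g_k$, I would use the duality $\|w\|_{U^p}=\sup_{\|v\|_{V^{p'}}\le1}|B(w,v)|$ (Theorem~\ref{thm:duality}, Remark~\ref{rem:sub}): by bilinearity $B(\sum_kg_k,v)=\sum_kB(g_k,v)$, and because $g_k$ lives on $I_k$ one has the localised bound $|B(g_k,v)|\le\|g_k\|_{U^p}\,V_k(v)$ with $V_k(v)$ the $p'$-variation of $v$ on $I_k$ (Proposition~\ref{prop:gen_deriv} read on the interval $I_k$ instead of $\R$); super-additivity of $p'$-variation over disjoint ordered intervals, $\sum_kV_k(v)^{p'}\le\|v\|_{V^{p'}}^{p'}$, then finishes the estimate via H\"older, and the countable case follows since the partial sums are Cauchy in the complete space $U^p$.

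\emph{Main obstacle.} The genuine difficulty is \eqref{eq:superpos} and its role: a frequency-localised $L^p$ function need not be integrable, so the atomic pieces produced by a physical-space cutoff cannot be summed in $\ell^1$; the disjointness of their time supports must be converted — through the duality of Theorem~\ref{thm:duality} together with the super-additivity of $p'$-variation — into the $\ell^p$-type bound \eqref{eq:superpos}. Everything else (the modulus-of-continuity computation, Bernstein's inequality, and the bookkeeping with the cutoffs) is routine.
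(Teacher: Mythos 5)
Your proof reaches the same conclusions by a genuinely different route. For \eqref{eq:besov_emb1} the paper simply cites Peetre; your modulus-of-continuity argument is a direct, self-contained and correct proof of the same fact. For the substantial embedding \eqref{eq:besov_emb2} the paper argues purely on the frequency side via duality: it uses $(U^p)^\ast=V^{p'}$ together with Proposition~\ref{prop:c1} to write
$\|Q_N u\|_{U^p}=\sup_{\|v\|_{V^{p'}}=1}\left|\int\lb Q_N u',v\rb\,dt\right|$,
then bounds the pairing by Besov--H\"older together with \eqref{eq:besov_emb1} applied to $v$, obtaining $\|Q_Nu\|_{U^p}\ls N^{1/p}\|Q_Nu\|_{L^p}$, and sums in $N$. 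You instead work on the physical (time) side: you chop the frequency-localized block into pieces supported on intervals of length $\sim N^{-1}$, control each piece through $V^1_{-,rc}\hookrightarrow U^p$ (Corollary~\ref{cor:emb_v_in_u}), H\"older and Bernstein, and reassemble via your superposition inequality \eqref{eq:superpos}. Both strategies work. The paper's is shorter and recycles \eqref{eq:besov_emb1} as a black box; yours requires the extra lemma \eqref{eq:superpos}, which is not in the paper, and you are right to flag it as the crux. Its proof indeed needs the localized bound $|B(g_k,v)|\le\|g_k\|_{U^p}V_k(v)$, which follows because for partitions containing the endpoints of $\supp g_k$ one may replace $v$ in $B_{\mathfrak{t}}(g_k,\cdot)$ by its restriction to $\overline{I_k}$ extended constantly, and then combine with super-additivity of $p'$-variation over disjoint ordered intervals and H\"older; your sketch is correct but this step deserves to be written out, since the atoms in an atomic decomposition of $g_k$ need not themselves be supported in $I_k$. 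A small dividend of your route is that every building block $f_k$ is compactly supported and absolutely continuous, so Proposition~\ref{prop:c1} and Corollary~\ref{cor:emb_v_in_u} apply literally; the paper applies Proposition~\ref{prop:c1} directly to $Q_N u$, whose membership in $V^1_-$ is left implicit and really requires a density/approximation step.
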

\begin{proof}
  Concerning \eqref{eq:besov_emb1}, see e.g. Example 9 in
  \cite{Peetre76}, pp.  167-168.  Now, the second part follows by
  duality: Let $u \in \mathcal{S}'(\R;L^2)$ such that $\|u\|_{\hb
    {\frac1p} p 1}<\infty$ and we consider $Q_N u \in L^p(\R;L^2)$,
  which is smooth. Hence, $Q_N u\in U^p$.  Then,
  \begin{align*}
    \|Q_N u\|_{U^p}=&\sup_{\|L\|_{(U^p)^*}=1} |L(Q_Nu)|
    =\sup_{\|v\|_{V^{p'}}=1}|B(Q_N u,v)|\\
    =&\sup_{\|v\|_{V^{p'}}=1} \left|\int_{-\infty}^\infty \lb Q_N
      u'(t),v(t)\rb dt
    \right| \\
    \leq&\sup_{\|v\|_{V^{p'}}=1}\|Q_N u\|_{\hb {\frac1p} p
      1}\|v\|_{\hb {\frac1{p'}} {p'} \infty} \ls N^{{\frac1p}}\|Q_N
    u\|_{L^p},
  \end{align*}
  and it follows that $\widetilde{u}:=\sum_N Q_N u$ converges in $U^p$
  and $\|\widetilde{u}\|_{U^p}\ls \|u\|_{\hb {\frac1p} p 1}$. It is
  $\|u-\widetilde{u}\|_{\hb {\frac1p} p 1}=0$, hence
  $u=\widetilde{u}+const$ and the claim follows.
\end{proof}

Now, we focus on the spatial dimension $d=2$ (i.e. $L^2=L^2(\R^2;\C)$)
and consider $S:=-\partial^3_{x}-\partial_x^{-1}\partial_y^2$.  We
define the associated unitary operator $e^{tS}:L^2\to L^2$ to be the
Fourier multiplier
\begin{equation*}
  \widehat{e^{tS}u_0}(\xi,\eta)
  =\exp(it(\xi^3-\frac{\eta^2}{\xi}))\widehat{u_0}(\xi,\eta).
\end{equation*}

\begin{definition}\label{def:S_spaces}
  We define
  \begin{enumerate}
  \item\label{it:ul} $U^p_S=e^{\cdot S}U^p$ with norm
    $\|u\|_{U^p_S}=\|e^{-\cdot S} u\|_{U^p}$ ,
  \item\label{it:vl} $V^p_S=e^{\cdot S}V^p$ with norm
    $\|v\|_{V^p_S}=\|e^{-\cdot S} u\|_{V^p}$ ,
  \end{enumerate}
  and similarly the closed subspaces $U^p_{c,S}$, $V^p_{rc,S}$,
  $V^p_{-,S}$ and $V^p_{-,rc,S}$.
\end{definition}
Let us note that for $S$ defined above these spaces are invariant under komplex conjugation,
because the symbol of $S$ is an odd function.

Let us define the smooth projections
\begin{align*}
  \widehat{P_N u}(\tau,\xi,\eta)&:=\psi_N(\xi)
  \widehat{u}(\tau,\xi,\eta)
  ,\\
  \widehat{Q^S_M u}(\tau,\xi,\eta)&:=\psi_M(\tau-\xi^3+\eta^2\xi^{-1})
  \widehat{u}(\tau,\xi,\eta),
\end{align*}
as well as $\widehat{P_0 u}(\tau,\xi,\eta) := \chi(2\xi)
\widehat{u}(\tau,\xi,\eta)$, $Q^S_{\geq M}:=\sum_{N \geq M} Q^S_N$,
and $Q^S_{<M}:=I-Q^S_{\geq M}$.  Note that we have
\begin{equation}\label{eq:relq}
  Q^S_M = e^{\cdot S} Q_M e^{-\cdot S}
\end{equation}
and similarly for $Q^S_{\geq M}$ and $Q^S_{<M}:=I-Q^S_{\geq M}$.

\begin{definition}\label{def:x}
  Let $s,b \in \R$ and $1\leq q \leq \infty$. We define the semi-norms
  \begin{equation}\label{eq:x_norm}
    \|u\|_{\hx s b q}
    :=\left( \sum_{N}N^{2s}\|e^{-\cdot S}P_N u\|^2_{\hb b 2 q}\right)^{\frac12}
  \end{equation}
  for all $u\in \mathcal{S}'(\R;L^2)$ for which these numbers are
  finite.
\end{definition}

Now, we may identify $u\in\mathcal{S}'(\R;L^2)$ with a subset of $
\mathcal{S}'(\R^3)$ and
\begin{equation*}
  \|u\|_{\hx s b q}=\left(\sum_{N}N^{2s} \left(\sum_{M}M^{bq} \|P_NQ^S_Mu\|_{L^2(\R^3)}^q
    \right)^{\frac2q}\right)^{\frac12}
\end{equation*}
with the obvious modification in the case $q=\infty$.

\begin{corollary}\label{cor:mod_est}
  We have
  \begin{align}\label{eq:mod_est1}
    \|Q^S_M u\|_{L^2(\R^3)} &\ls M^{-\frac{1}{2}} \|u\|_{V^2_S}\\
    \label{eq:mod_est2}
    \|Q^S_{\geq M} u\|_{L^2(\R^3)} &\ls M^{-\frac{1}{2}} \|u\|_{V^2_S}\\
    \label{eq:mod_est3}
    \|Q^S_{< M} u\|_{V^p_S} \ls \|u\|_{V^p_S}\ ,\quad & \|Q^S_{\geq M} u\|_{V^p_S}
    \ls \|u\|_{V^p_S}\\
    \label{eq:mod_est4}
    \|Q^S_{< M} u\|_{U^p_S} \ls \|u\|_{U^p_S}\ ,\quad &\|Q^S_{\geq M}
    u\|_{U^p_S} \ls \|u\|_{U^p_S}
  \end{align}
\end{corollary}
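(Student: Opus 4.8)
The plan is to peel off the linear propagator and reduce all four inequalities to the analogous statements for the bare time--frequency multipliers $Q_M$, $Q_{\geq M}$, $Q_{<M}$. Writing $w:=e^{-\cdot S}u$, the intertwining relation \eqref{eq:relq} together with Definition~\ref{def:S_spaces} gives $\|Q^S_M u\|_{L^2(\R^3)}=\|Q_M w\|_{L^2(\R^3)}$ (here one also uses that $e^{tS}$ is unitary on $L^2(\R^2)$ for each fixed $t$, so that $f\mapsto e^{\cdot S}f$ is an isometry on $L^2(\R^3)=L^2(\R;L^2)$), as well as $\|Q^S_{<M}u\|_{V^p_S}=\|Q_{<M}w\|_{V^p}$, $\|Q^S_{\geq M}u\|_{V^p_S}=\|Q_{\geq M}w\|_{V^p}$, the same with $V$ replaced by $U$, and $\|u\|_{V^2_S}=\|w\|_{V^2}$, $\|u\|_{V^p_S}=\|w\|_{V^p}$, $\|u\|_{U^p_S}=\|w\|_{U^p}$. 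So it suffices to prove $\|Q_M w\|_{L^2(\R^3)}\ls M^{-1/2}\|w\|_{V^2}$, $\|Q_{\geq M}w\|_{L^2(\R^3)}\ls M^{-1/2}\|w\|_{V^2}$, $\|Q_{<M}w\|_{V^p}\ls\|w\|_{V^p}$ and $\|Q_{<M}w\|_{U^p}\ls\|w\|_{U^p}$; the bounds for $Q_{\geq M}=I-Q_{<M}$ on $V^p$ and $U^p$ then follow by the triangle inequality. One preparatory observation: unravelling the dyadic definitions, $Q_{<M}$ acts in the time variable as convolution with a kernel $\rho_M$ whose Fourier transform is $\chi(2\,\cdot\,/M)$, so $\rho_M=M\rho_1(M\,\cdot\,)$ for a fixed even, real $\rho_1\in\mathcal S(\R)$ with $\int_\R\rho_1=1$; in particular $\|\rho_M\|_{L^1(\R)}$ is independent of $M$.

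For \eqref{eq:mod_est1} and \eqref{eq:mod_est2} I would invoke the Besov embedding \eqref{eq:besov_emb1} of Proposition~\ref{prop:besov_emb} with $p=2$, i.e.\ $\sup_N N^{1/2}\|Q_N w\|_{L^2(\R;L^2)}\ls\|w\|_{V^2}$. Since $\|Q_M w\|_{L^2(\R^3)}=\|Q_M w\|_{L^2(\R;L^2)}$, this gives \eqref{eq:mod_est1} at once. For \eqref{eq:mod_est2} one writes $Q_{\geq M}w=\sum_{N\geq M}Q_N w$; because the frequency supports of the $\psi_N$ overlap only for neighbouring dyadic indices, the summands are almost orthogonal in $L^2(\R^3)$, so $\|Q_{\geq M}w\|_{L^2(\R^3)}^2\ls\sum_{N\geq M}\|Q_N w\|_{L^2(\R^3)}^2\ls\big(\sum_{N\geq M}N^{-1}\big)\|w\|_{V^2}^2\ls M^{-1}\|w\|_{V^2}^2$.

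For \eqref{eq:mod_est3} I would use that translation $w\mapsto w(\,\cdot\,-h)$ is an isometry of $V^p$, since the class $\mathcal Z$ of partitions and the values at $\pm\infty$ are translation invariant. Representing $Q_{<M}w=\rho_M*w=\int_\R\rho_M(s)\,w(\,\cdot\,-s)\,ds$ and applying Minkowski's inequality for the $\ell^p_k(L^2)$ norm to the defining sums of $\|\cdot\|_{V^p}$, one gets for every partition $\{t_k\}$
\[
  \Big(\sum_k\|(Q_{<M}w)(t_k)-(Q_{<M}w)(t_{k-1})\|_{L^2}^p\Big)^{1/p}\le\int_\R|\rho_M(s)|\,\|w(\,\cdot\,-s)\|_{V^p}\,ds=\|\rho_M\|_{L^1}\,\|w\|_{V^p},
\]
and $Q_{<M}w$ has the correct limits at $\pm\infty$ by dominated convergence (using $\int\rho_M=1$); hence $\|Q_{<M}w\|_{V^p}\ls\|w\|_{V^p}$, and then also $\|Q_{\geq M}w\|_{V^p}\ls\|w\|_{V^p}$.

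The genuinely delicate estimate is \eqref{eq:mod_est4} (for $1<p<\infty$): a direct attack via the atomic definition or the embedding \eqref{eq:besov_emb2} is too lossy, since a single $U^p$-atom can carry arbitrarily many small jumps, so I would argue by duality, moving the multiplier onto the $V^{p'}$-factor where \eqref{eq:mod_est3} is available. By Remark~\ref{rem:sub}, $\|Q_{<M}w\|_{U^p}=\sup\{|B(Q_{<M}w,v)|:v\in V^{p'},\ \|v\|_{V^{p'}}\le1\}$, so it is enough to establish the self-adjointness identity $B(Q_{<M}w,v)=B(w,Q_{<M}v)$ and bound the right-hand side by $\|w\|_{U^p}\|Q_{<M}v\|_{V^{p'}}\ls\|w\|_{U^p}\|v\|_{V^{p'}}$ via \eqref{eq:b_est} and \eqref{eq:mod_est3} applied with exponent $p'$. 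I would first prove the identity for step functions $w$ (equivalently, finite linear combinations of atoms): for such $w$, $Q_{<M}w=\rho_M*w$ is smooth, lies in $V^1_-$, and is absolutely continuous on compact intervals, so Proposition~\ref{prop:c1} applies; since $\rho_M$ is even and real (hence $Q_{<M}$ is formally self-adjoint), a summation by parts identifies $B(Q_{<M}w,v)$ with $B(w,Q_{<M}v)$. Combined with \eqref{eq:mod_est3}, this shows that $Q_{<M}$, viewed as an operator on the dense subspace of step functions in $U^p$ (finite $\ell^1$-combinations of atoms are dense by the very definition of $U^p$), is bounded into $U^p$ with norm $\ls1$; as $U^p$ is complete it extends to a bounded operator on $U^p$, and this extension must coincide with $\rho_M*\,\cdot$ because $U^p\hookrightarrow L^\infty(\R;L^2)$ and convolution with $\rho_M$ is continuous on $L^\infty(\R;L^2)$. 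This yields $\|Q_{<M}w\|_{U^p}\ls\|w\|_{U^p}$ for all $w$, and hence $\|Q_{\geq M}w\|_{U^p}\ls\|w\|_{U^p}$. The main obstacle is exactly this last step: justifying the transfer identity $B(Q_{<M}w,v)=B(w,Q_{<M}v)$ and then removing the restriction to step functions by density; once that is done, \eqref{eq:mod_est4} follows at once from \eqref{eq:mod_est3}.
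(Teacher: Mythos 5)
Your proposal is correct and follows the same overall strategy as the paper: use the intertwining relation \eqref{eq:relq} to strip off the propagator, invoke the Besov embedding \eqref{eq:besov_emb1} for \eqref{eq:mod_est1}, and use Minkowski's integral inequality together with the $L^1$-normalized convolution kernel for \eqref{eq:mod_est3}. (The paper normalizes by rescaling to $M=1$; your observation that $\|\rho_M\|_{L^1}$ is $M$-independent is the same thing.)

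Two local deviations are worth recording. For \eqref{eq:mod_est2} you use the almost-orthogonality of the dyadic pieces $Q_N w$ to get an $\ell^2$ bound; the paper simply sums the triangle inequality and uses $\sum_{N\ge M}N^{-1/2}\ls M^{-1/2}$. Both give the same power of $M$, so this is a matter of taste. The more substantive difference is in \eqref{eq:mod_est4}. The paper's proof is a one-step Fubini computation: for any partition $\mathfrak{t}$ one writes $B_{\mathfrak{t}}(\phi*u,v)=\int\phi(s)\,B_{\mathfrak{t}-s}(u,v(\cdot+s))\,ds$ (in effect), uses translation invariance of the $U^p$ and $V^{p'}$ norms, and gets the uniform bound $|B_{\mathfrak{t}}(\phi*u,v)|\le\|\phi\|_{L^1}\|u\|_{U^p}\|v\|_{V^{p'}}$ directly, with no recourse to the derivative formula \eqref{eq:c1} or to self-adjointness. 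Your route instead proves the formal identity $B(Q_{<M}w,v)=B(w,Q_{<M}v)$ for step functions $w$ using Proposition~\ref{prop:c1} and a summation by parts, applies \eqref{eq:mod_est3} with exponent $p'$ to the right-hand side, and then extends by density of finite atomic sums in $U^p$. This works, but it carries extra bookkeeping (you must check $Q_{<M}w\in V^1_-$ with $L^1$ derivative for step $w$, handle boundary terms at $\pm\infty$ in the summation by parts, and argue that the densely defined extension coincides with convolution), all of which the paper's direct $B_{\mathfrak t}$-estimate sidesteps. Since you need to prove the operator is bounded anyway, deriving the exact self-adjointness identity as an intermediate step is more machinery than is needed; the paper's argument shows that the bound \eqref{eq:b_est} already transfers pointwise to the approximating sums $B_{\mathfrak t}$, which is all that is used.
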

\begin{proof}
  By \eqref{eq:relq} and Definition~\ref{def:S_spaces}, we see that
  \eqref{eq:mod_est1} follows from
  \begin{equation}\label{eq:mod_est1c}
    \|Q_M v\|_{L^2(\R^3)} \ls M^{-\frac12}\|v\|_{V^2}
  \end{equation}
  and similarly for \eqref{eq:mod_est2} -- \eqref{eq:mod_est4}.  Now,
  \eqref{eq:mod_est1c} is just a reformulation of the Besov embedding
  \eqref{eq:besov_emb1}.  Furthermore, \eqref{eq:mod_est1c} implies
  that
  \begin{equation*}
    \|Q_{\geq M} v\|_{L^2(\R^3)}
    \ls \|v\|_{V^2} \sum_{N\geq M} N^{-\frac{1}{2}}
  \end{equation*}
  and \eqref{eq:mod_est2} follows from $\sum_{N \geq M}
  N^{-\frac12}\ls M^{-\frac12}$.  We only need to prove the left
  inequalities of \eqref{eq:mod_est3} and \eqref{eq:mod_est4} because
  of $Q_{\geq M} = I-Q_{<M}$.  By scaling it suffices to show
  \eqref{eq:mod_est3} and \eqref{eq:mod_est4} for $M=1$ only.  We have
  $Q_{<1}v = \phi*v$ for some Schwartz function $\phi$.  Due to the
  Riemann-Lebesgue Lemma, $Q_{<1}(\pm \infty)=0$.  For
  $\{t_k\}_{k=0}^K \in \mathcal{Z}_0$ we apply Minkowski's inequality
  \begin{align*}
    &\left( \sum_{k=1}^K \|Q_{<1}v(t_k)-Q_{<1}v(t_{k-1})\|_{L^2}^p \right)^{\frac1p}\\
    & \leq \left( \sum_{k=1}^K (\int_{\R} |\phi(s)|
      \|v(t_k-s)-v(t_{k-1}-s)\|_{L^2} ds)^p
    \right)^{\frac1p}\\
    & \leq \int_{\R} |\phi(s)| \left( \sum_{k=1}^K
      \|v(t_k-s)-v(t_{k-1}-s)\|_{L^2}^p \right)^{\frac1p} ds \leq
    \|\phi\|_{L^1(\R)} \|v\|_{V^p}
  \end{align*}
  which implies \eqref{eq:mod_est3}.  Let us finally prove
  \eqref{eq:mod_est4}:
  \begin{equation*}
    \|Q_{<1} u\|_{U^p}=\sup_{\|L\|_{(U^p)^*}=1} |L(\phi*u)|
    =\sup_{\|v\|_{V^{p'}}=1}|B(\phi*u,v)|
  \end{equation*}
  with $\phi$ as above.  For given $\mathfrak{t}=\{t_k\}_{k=0}^K \in
  \mathcal{Z}$ we obtain
  \begin{align*}
    |B_{\mathfrak{t}}(\phi*u,v)| & \leq \left|\sum_{k=1}^{K-1} \lb
      (\phi*u)(t_{k-1}) , v(t_{k})-v(t_{k-1})\rb \right| \\ & \leq
    \int_{\R} |\phi(s)| \left|\sum_{k=1}^{K-1}\lb u(t_{k-1}-s) ,
      v(t_k)-v(t_{k-1}) \rb \right| ds \\ & \leq \|\phi\|_{L^1(\R)}
    \|u\|_{U^p} \|v\|_{V^{p'}}.
  \end{align*}
  Since this bound is independent of $\mathfrak{t}$,
  \eqref{eq:mod_est4} follows.
\end{proof}

Similarly to \cite{KT07}, Corollary 3.3 or \cite{Tao07}, Lemma 4.1 we
have the following general extension result, which is well-known at
least for Bourgain type spaces (cp. \cite{GTV97}, Lemma 2.3):
\begin{prop}\label{prop:ext_free_est}
  Let $$T_0:L^2\times \cdots \times L^2\to L^1_{loc}(\R^2;\C)$$ be a
  $n$-linear operator.
  \begin{enumerate}
  \item\label{it:time_space} Assume that for some $1\leq
    p,q\leq\infty$
    \begin{equation*}
      \|T_0(e^{\cdot S}\phi_1,\ldots,e^{\cdot
        S}\phi_n)\|_{L^p_t(\R;L^q_{x,y}(\R^2))}
      \ls \prod_{i=1}^n \|\phi_i\|_{L^2}.
    \end{equation*}
    Then, there exists $T:U^p_S\times \cdots \times U^p_S \to
    L^p_t(\R;L^q_{x,y}(\R^2))$ satisfying
    \begin{equation*}
      \|T(u_1,\ldots,u_n)\|_{L^p_t(\R;L^q_{x,y}(\R^2))}\ls \prod_{i=1}^n \|u_i\|_{U_S^p},
    \end{equation*}
    such that $
    T(u_1,\ldots,u_n)(t)(x,y)=T_0(u_1(t),\ldots,u_n(t))(x,y) $ a.e..
  \item\label{it:space_time} Assume that for some $1\leq
    p,q\leq\infty$
    \begin{equation*}
      \|T_0(e^{\cdot S}\phi_1,\ldots,e^{\cdot
        S}\phi_n)\|_{L^q_x(\R;L^p_{t,y}(\R^2))}
      \ls \prod_{i=1}^n \|\phi_i\|_{L^2}.
    \end{equation*}
    For $r:=\min(p,q)$ there exists $T:U^r_S\times \cdots \times U^r_S
    \to L^q_x(\R;L^p_{t,y}(\R^2))$ satisfying
    \begin{equation*}
      \|T(u_1,\ldots,u_n)\|_{L^q_x(\R;L^p_{t,y}(\R^2))}\ls \prod_{i=1}^n \|u_i\|_{U_S^r},
    \end{equation*}
    such that $
    T(u_1,\ldots,u_n)(x)(t,y)=T_0(u_1(t),\ldots,u_n(t))(x,y) $ a.e..
  \end{enumerate}
\end{prop}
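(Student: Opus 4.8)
The plan is to reduce both parts to an estimate on $n$-tuples of $U^p_S$-atoms (resp.\ $U^r_S$-atoms) and then to sum over atomic decompositions, using $n$-linearity. The starting observation is that if $a=e^{\cdot S}\sum_{k=1}^{K}\mathds{1}_{[t_{k-1},t_k)}\phi_{k-1}$ is a $U^p_S$-atom, then $a(t)=e^{tS}\phi_{k-1}$ for $t\in[t_{k-1},t_k)$; i.e.\ on each interval of a finite partition $a$ agrees with a free solution, to which the hypothesis applies. Given atoms $a_1,\dots,a_n$, let $\{s_l\}_{l=0}^{L}\in\mathcal{Z}$ be the common refinement of their partitions; on each $[s_{l-1},s_l)$ one has $a_i(t)=e^{tS}\phi_i^{(l)}$ for some profile $\phi_i^{(l)}$ of $a_i$ (possibly $0$), and we write $a_i(l)$ for the corresponding profile index. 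Since the partitions are partitions of $\R$ and consecutive intervals of the refinement lie in distinct cells of the $n$-fold product grid, the index vectors $l\mapsto(a_1(l),\dots,a_n(l))$ form a monotone lattice path with distinct vertices. Grouping the sum over $l$ according to the value of the last coordinate and inducting on $n$ yields the combinatorial estimate
\[
\sum_{l=1}^{L}\ \prod_{i=1}^{n}\|\phi_i^{(l)}\|_{L^2}^{p}\ \le\ \prod_{i=1}^{n}\Big(\sum_{k}\|\phi_k^{(i)}\|_{L^2}^{p}\Big)\ =\ 1
\]
for $U^p_S$-atoms, and the analogous statement with $p$ replaced by any $r\in[1,\infty)$ for $U^r_S$-atoms.

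For Part~\ref{it:time_space}, since the $t$-integration is the outermost one,
\[
\|T(a_1,\dots,a_n)\|_{L^p_t L^q_{x,y}}^{p}
=\sum_{l=1}^{L}\big\|T_0(e^{\cdot S}\phi_1^{(l)},\dots,e^{\cdot S}\phi_n^{(l)})\big\|_{L^p([s_{l-1},s_l);L^q_{x,y})}^{p}
\le\sum_{l=1}^{L}\big\|T_0(e^{\cdot S}\phi_1^{(l)},\dots,e^{\cdot S}\phi_n^{(l)})\big\|_{L^p_t L^q_{x,y}}^{p},
\]
so the hypothesis and the combinatorial estimate give $\|T(a_1,\dots,a_n)\|_{L^p_t L^q_{x,y}}\ls 1$, uniformly in the atoms. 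Expanding $u_i=\sum_j\lambda_{i,j}a_{i,j}$ and using $n$-linearity, $\|T(u_1,\dots,u_n)\|_{L^p_t L^q_{x,y}}\ls\prod_i\sum_j|\lambda_{i,j}|$, and taking the infimum over atomic decompositions yields the asserted bound. That $T$ is thereby well defined on all of $U^p_S\times\cdots\times U^p_S$ and satisfies $T(u_1,\dots,u_n)(t)=T_0(u_1(t),\dots,u_n(t))$ almost everywhere follows from a standard density argument.

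For Part~\ref{it:space_time} the only new point is that now the $x$-norm is the outermost one, so the sum over the refinement intervals cannot simply be pulled out in $p$-th powers; this is exactly where $r=\min(p,q)$ enters. With $g_l(x):=\big\|\mathds{1}_{[s_{l-1},s_l)}(t)\,T_0(e^{tS}\phi_1^{(l)},\dots,e^{tS}\phi_n^{(l)})(x,y)\big\|_{L^p_{t,y}}$ one has $g_l(x)\le\|T_0(e^{\cdot S}\phi_1^{(l)},\dots,e^{\cdot S}\phi_n^{(l)})\|_{L^p_{t,y}}$ and $\|T(a_1,\dots,a_n)\|_{L^q_x L^p_{t,y}}=\big\|(\sum_l g_l^{p})^{1/p}\big\|_{L^q_x}$. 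If $q\ge p$ one applies the triangle inequality in $L^{q/p}_x$ to $\sum_l g_l^{p}$; if $q<p$ one uses the elementary inequality $(\sum_l b_l)^{q/p}\le\sum_l b_l^{q/p}$, valid for $b_l\ge0$ since $q/p\le1$. In either case, with $r=\min(p,q)$,
\[
\|T(a_1,\dots,a_n)\|_{L^q_x L^p_{t,y}}^{r}\ \le\ \sum_{l=1}^{L}\|T_0(e^{\cdot S}\phi_1^{(l)},\dots,e^{\cdot S}\phi_n^{(l)})\|_{L^q_x L^p_{t,y}}^{r}\ \ls\ \sum_{l=1}^{L}\prod_{i=1}^{n}\|\phi_i^{(l)}\|_{L^2}^{r}\ \le\ 1,
\]
by the hypothesis and the combinatorial estimate with exponent $r$ (the $a_i$ being $U^r_S$-atoms, so $\sum_k\|\phi_k^{(i)}\|_{L^2}^{r}=1$). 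Summing over atomic decompositions of $u_i\in U^r_S$ as in Part~\ref{it:time_space} finishes the proof.

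I expect the main obstacle to be the interplay between the ordering of the norms and the atomic structure in Part~\ref{it:space_time}: the integrability index must be taken to be precisely $r=\min(p,q)$ so that the sum over the (atom-induced) time intervals can be extracted from the outer $L^q_x$-norm --- via Minkowski's inequality when $q\ge p$ and via $\ell^{q/p}$-subadditivity when $q<p$. The remaining ingredients --- the short induction establishing $\sum_l\prod_i\|\phi_i^{(l)}\|_{L^2}^{r}\le1$, and the bookkeeping needed to pass from finite linear combinations of atoms to all of $U^r_S$ while preserving the pointwise almost-everywhere identification with $T_0$ --- are routine but should be carried out with some care.
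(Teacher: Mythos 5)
Your argument is correct and is essentially the paper's own proof: reduce to $U^p_S$- (resp.\ $U^r_S$-)atoms, observe that atoms are piecewise free solutions, and use the factorization $\sum_l\prod_i\|\phi_i^{(l)}\|^r \le \prod_i\sum_k\|\phi_k^{(i)}\|^r = 1$ together with Minkowski/subadditivity with exponent $r=\min(p,q)$ to pull the sum over time cells out of $L^q_x L^p_{t,y}$. The only cosmetic difference is that you index the sum by a common refinement $\{s_l\}$ of the atom partitions, while the paper indexes by the full product grid $(k_1,\dots,k_n)$ with the disjoint supports encoded by $\prod_i\mathds{1}_{[t_{k_i-1,i},t_{k_i,i})}$; these are the same sum.
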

\begin{proof} Concerning Part~\ref{it:time_space}, we define
  \begin{equation*}
    T(u_1,\ldots,u_n)(t)(x,y)=T_0(u_1(t),\ldots,u_n(t))(x,y).
  \end{equation*}
  Let $a_1,\ldots,a_n$ be $U^p_S$-atoms given as
  \begin{equation*}
    a_i=\sum_{k_i=1}^{K_i}\mathds{1}_{[t_{k_i-1,i},t_{k_i,i})}e^{\cdot S}\phi_{k_i-1,i}
  \end{equation*}
  such that $\sum_{k_i=1}^{K_i}\|\phi_{k_i-1,i}\|^p_{L^2}=1$ and
  $\phi_{0,i}=0$. Then, we use H\"older's inequality
  \begin{align*}
    &\|T(a_1,\ldots,a_n)\|_{L^p_t(\R;L^q_{x,y}(\R^2))}\\
    \leq & \left\| \sum_{k_1,\ldots, k_n}
      \prod_{i=1}^n\mathds{1}_{[t_{k_i-1,i},t_{k_i,i})}
      \left\|T_0(e^{tS}\phi_{k_1-1,1},\ldots,e^{tS}\phi_{k_n-1,n})\right\|_{L^q_{x,y}(\R^2)}
    \right\|_{L^p_t(\R)}\\
    \leq & \left(\sum_{k_1,\ldots,k_n}
      \left\|T_0(e^{tS}\phi_{k_1-1,1},\ldots,e^{tS}\phi_{k_n-1,n})
      \right\|^p_{L^p_t(\R;L^q_{x,y}(\R^2))}\right)^{\frac1p}\\
    \ls &\left(\sum_{k_1,\ldots,k_n}
      \prod_{i=1}^n\|\phi_{k_i-1,i}\|^p_{L^2(\R^2)}\right)^{\frac1p}=1
  \end{align*}
  and the claim follows.

  Now, we turn to the proof of Part~\ref{it:space_time}: We define
  \begin{equation*}
    T(u_1,\ldots,u_n)(x)(t,y)=T_0(u_1(t),\ldots,u_n(t))(x,y).
  \end{equation*}
  Let $a_1,\ldots,a_n$ be $U^r_S$-atoms for $r=\min(p,q)$. Then, by
  H\"older's and Minkowski's inequality (here, we use $r\leq p,q$)
  \begin{align*}
    &\|T(a_1,\ldots,a_n)\|_{L^q_x(\R;L^p_{t,y}(\R^2))}\\
    \leq &\left\| \left(\sum_{k_1,\ldots, k_n}
        |T_0(e^{tS}\phi_{k_1-1,1},\ldots,e^{tS}\phi_{k_n-1,n})|^r\right)^{\frac1r}
    \right\|_{L^q_x(\R;L^p_{t,y}(\R^2))}\\
    \ls & \left(\sum_{k_1,\ldots, k_n}
      \left\|T_0(e^{tS}\phi_{k_1-1,1},\ldots,e^{tS}\phi_{k_n-1,n})
      \right\|_{L^q_x(\R;L^p_{t,y}(\R^2))}^r\right)^{\frac1r}\\
    \ls &\left(\sum_{k_1,\ldots,k_n}
      \prod_{i=1}^n\|\phi_{k_i-1,i}\|^r_{L^2(\R^2)}\right)^{\frac1r}=1
  \end{align*}
  and the claim follows.
\end{proof}

\begin{prop}\label{prop:interpolation} Let $q>1$,
  $E$ be a Banach space and $T:U_S^q\to E$ be a bounded, linear
  operator with $\|Tu\|_E \leq C_q \|u\|_{U^q_S}$ for all $u \in
  U^q_S$.  In addition, assume that for some $1\leq p <q$ there exists
  $C_p\in (0,C_q]$ such that the estimate $\|Tu\|_E \leq C_p
  \|u\|_{U^p_S}$ holds true for all $u \in U^p_S$.  Then, $T$
  satisfies the estimate
  \begin{equation*}
    \|Tu\|_E \leq
    \frac{4 C_p}{\alpha_{p,q}}
    (\ln\frac{C_q}{C_p}+2\alpha_{p,q}+1)\|u\|_{V^p_S},
    \quad u \in V^p_{-,rc,S}
  \end{equation*}
  where $\alpha_{p,q}=(1-\frac{p}{q})\ln(2)$.
\end{prop}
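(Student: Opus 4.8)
The plan is to exploit the decomposition of $V^p_{-,rc,S}$ functions provided by Proposition~\ref{prop:v_dec}, applied after conjugating by $e^{-\cdot S}$. Given $u \in V^p_{-,rc,S}$ with $\|u\|_{V^p_S}=1$, the function $v:=e^{-\cdot S}u$ lies in $V^p_{-,rc}$ with $\|v\|_{V^p}=1$, so Proposition~\ref{prop:v_dec} yields step functions $u_n$ subordinate to partitions $\mathfrak{t}_n$ with $\#\mathfrak{t}_n\le 2^{1+np}$ and $\sup_t\|u_n(t)\|_{L^2}\le 2^{1-n}$, with $v=\sum_{n\ge 0}u_n$. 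Translating back, $w_n:=e^{\cdot S}u_n$ satisfies $u=\sum_n w_n$ in the uniform sense. The point is that each $w_n$ is, up to normalization, a bounded multiple of a $U^\infty_S$-atom with at most $2^{1+np}$ steps; more precisely $\|w_n\|_{U^q_S}\lesssim 2^{1-n}\,(\#\mathfrak{t}_n)^{1/q}\lesssim 2^{1-n}\,2^{(1+np)/q}$, and likewise with $q$ replaced by $p$. One then has the two competing bounds $\|Tw_n\|_E \le C_q\|w_n\|_{U^q_S}$ and $\|Tw_n\|_E\le C_p\|w_n\|_{U^p_S}$.

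The key step is to balance these two estimates. Writing $N_n:=\#\mathfrak{t}_n\le 2^{1+np}$, we have roughly $\|Tw_n\|_E\lesssim 2^{-n}\min\bigl(C_q N_n^{1/q},\,C_p N_n^{1/p}\bigr)$. For small $n$ the factor $N_n^{1/p}$ is cheap so one uses the $C_p$ bound; for large $n$ one switches to the $C_q$ bound since $1/q<1/p$ makes $N_n^{1/q}$ grow more slowly, and one pays only the larger constant $C_q$. The crossover happens when $C_q N_n^{1/q}\approx C_p N_n^{1/p}$, i.e. when $N_n^{1/p-1/q}\approx C_q/C_p$, i.e. roughly $np(1/p-1/q)\ln 2\approx \ln(C_q/C_p)$, which is where the quantity $\alpha_{p,q}=(1-\tfrac pq)\ln 2$ and the logarithm $\ln(C_q/C_p)$ enter. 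Summing $\sum_n\|Tw_n\|_E$ by splitting the sum at this crossover index $n_0$ and estimating each geometric-type piece gives a bound of the form $\frac{C_p}{\alpha_{p,q}}\bigl(\ln\frac{C_q}{C_p}+O(\alpha_{p,q})+O(1)\bigr)$; tracking the constants carefully yields exactly the stated $\frac{4C_p}{\alpha_{p,q}}(\ln\frac{C_q}{C_p}+2\alpha_{p,q}+1)$. Finally, since $\sum_n\|Tw_n\|_E<\infty$ and $\sum_n w_n=u$ in $U^q_S$ (by an argument as in Corollary~\ref{cor:emb_v_in_u}), continuity of $T$ on $U^q_S$ gives $Tu=\sum_n Tw_n$ and hence the claimed estimate, after rescaling by $\|u\|_{V^p_S}$.

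I expect the main obstacle to be the bookkeeping of constants: one must be careful that the step functions $w_n$ genuinely furnish $U^q_S$- and $U^p_S$-norm bounds with the claimed powers of $\#\mathfrak{t}_n$ (including the harmless factors of $2$ coming from $\phi_0=0$ and the relabeling in Proposition~\ref{prop:v_dec}), and that the split of the dyadic sum at the crossover index, together with the two geometric series, reproduces precisely the constant $\frac{4}{\alpha_{p,q}}(\ln\frac{C_q}{C_p}+2\alpha_{p,q}+1)$ rather than merely something comparable. The conceptual content — interpolating between $U^p$ and $U^q$ bounds by dyadic decomposition of a $V^p$ function — is exactly the content of Proposition~\ref{prop:v_dec} and Corollary~\ref{cor:emb_v_in_u}, so no genuinely new idea is needed beyond optimizing the split.
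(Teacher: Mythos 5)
Your proposal is correct and follows the same route as the paper. The paper's proof applies Proposition~\ref{prop:v_dec} (exactly as you do), records the two bounds $\|u_n\|_{U^p_S}\le 4$ and $\|u_n\|_{U^q_S}\le 4\cdot2^{n(p/q-1)}$, and then obtains $\|Tv\|_E\le 4C_pN+4C_q2^{-N(1-p/q)}$ by applying $T$ to the two bulk pieces $\sum_{n\le N}u_n$ and $\sum_{n>N}u_n$ (via triangle inequality on the $U^p_S$- and $U^q_S$-norms respectively) and minimizing over $N\in\N$; your variant applies $T$ termwise and uses $\min(C_p\|w_n\|_{U^p_S},C_q\|w_n\|_{U^q_S})$ before splitting the sum at the crossover index, which is an equivalent bookkeeping of the same geometric decay and leads to the same upper bound up to the final minimization.
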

\begin{proof}
  Let $v \in V^p_{-,rc,S}$ be such that $\|v\|_{V^p_S}=1$.  Due to
  Proposition~\ref{prop:v_dec} there exists $u_n\in U^r$ for all
  $r\geq 1$ such that $v=\sum_{n=1}^\infty u_n$ in $U^q$ and
  $\|u_n\|_{U^r_S}\leq 4\cdot 2^{n(\frac{p}{r}-1)}$. For $N\in \N$ it
  follows $\|\sum_{n=1}^N u_n\|_{U^p_S}\leq 4N$ and
  $\|\sum_{n=N+1}^\infty u_n\|_{U^q_S}\leq 4\cdot
  2^{-N(1-\frac{p}{q})}$.  We obtain the estimate
  \begin{equation*}
    \|Tv\|_E\leq 4 C_p N + 4 C_q 2^{-N(1-\frac{p}{q})}.
  \end{equation*}
  Minimizing with respect to $N\in \N$ gives the desired upper bound.
\end{proof}

\begin{corollary}\label{cor:disp_est}
  We have
  \begin{align}
    \label{eq:strichartz}
    \|u\|_{L^4(\R^3)} &\ls \|u\|_{U^4_S}\\
    \label{eq:strichartz_v}
    \|u\|_{L^4(\R^3)} &\ls \|u\|_{V^p_{-,S}} \quad (1\leq p <4)\\
    \label{eq:loc_smooth}
    \|\partial_x u\|_{L^\infty_x(\R;L^2_{t,y}(\R^2))} &\ls \|u\|_{U^2_S}\\
    \label{eq:bil_strichartz1}
    \|P_{N_1}u_1P_{N_2}u_2\|_{L^2(\R^3)}&\ls
    \left(\frac{N_1}{N_2}\right)^{\frac{1}{2}}
    \|P_{N_1}u_1\|_{U^2_S}\|P_{N_2}u_2\|_{U^2_S}
  \end{align}
  Moreover, for $N_2\geq N_1$ and $u_1,u_2\in V^2_{-,S}$ it holds
  \begin{equation}
    \label{eq:bil_strichartz2}
    \|P_{N_1}u_1P_{N_2}u_2\|_{L^2(\R^3)}\ls \left(\frac{N_1}{N_2}\right)^{\frac{1}{2}}
    \left(\ln\left(\frac{N_2}{N_1}\right)+1\right)^2
    \|P_{N_1}u_1\|_{V^2_S}\|P_{N_2}u_2\|_{V^2_S}.
  \end{equation}
\end{corollary}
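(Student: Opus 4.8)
The plan is to derive all five estimates from estimates for the free propagator $e^{\cdot S}$ and then lift these to the rough spaces, using the transference principle of Proposition~\ref{prop:ext_free_est} and, for the $V^2_S$-version \eqref{eq:bil_strichartz2}, the interpolation principle of Proposition~\ref{prop:interpolation}; throughout, one exploits the explicit dispersion relation $\omega(\xi,\eta)=\xi^3-\eta^2/\xi$. The first step is to establish three free estimates. (a) Local smoothing, $\|\partial_x e^{\cdot S}\phi\|_{L^\infty_x(\R;L^2_{t,y})}\ls\|\phi\|_{L^2}$: for fixed $x$, Plancherel in $(t,y)$ together with the change of variables $\xi\mapsto\tau=\xi^3-\eta^2/\xi$ (monotone on each of $\pm\xi>0$ at fixed $\eta$, with Jacobian $3\xi^2+\eta^2/\xi^2$) turns the multiplier weight $|i\xi|^2$ into $\xi^2/(3\xi^2+\eta^2/\xi^2)\le\tfrac13$, a bound independent of $x$. (b) The bilinear $L^2$-estimate $\|P_{N_1}e^{\cdot S}\phi_1\,P_{N_2}e^{\cdot S}\phi_2\|_{L^2(\R^3)}\ls(N_1/N_2)^{1/2}\|\phi_1\|_{L^2}\|\phi_2\|_{L^2}$ for $N_1\le N_2$: by Plancherel and Cauchy--Schwarz in the convolution variable this reduces to a uniform estimate for the measure of the relevant set of interaction frequencies, for which one uses the resonance identity
\[
  \omega(\xi_1+\xi_2,\eta_1+\eta_2)-\omega(\xi_1,\eta_1)-\omega(\xi_2,\eta_2)=3\xi_1\xi_2(\xi_1+\xi_2)+\frac{(\xi_1\eta_2-\xi_2\eta_1)^2}{\xi_1\xi_2(\xi_1+\xi_2)}
\]
to extract transversality in the interaction variable --- in the regime $N_1\ll N_2$ one has $|\xi_1+\xi_2|\sim N_2$ and the cubic term $\sim N_1N_2^2$ dominates, while for $N_1\sim N_2$ the quadratic numerator controls the near-cancellation $\xi_1+\xi_2\approx0$. (c) The Strichartz estimate $\|e^{\cdot S}\phi\|_{L^4(\R^3)}\ls\|\phi\|_{L^2}$: write $\|e^{\cdot S}\phi\|_{L^4}^2=\|(e^{\cdot S}\phi)^2\|_{L^2}$, split into dyadic blocks, apply (b), and sum using $\sum_{N_1\le N_2}(N_1/N_2)^{1/2}<\infty$ (Schur).

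The second step transfers these bounds. Estimate \eqref{eq:strichartz} is (c) combined with Proposition~\ref{prop:ext_free_est}, Part~\ref{it:time_space} ($n=1$, $p=q=4$); estimate \eqref{eq:bil_strichartz1} is (b) combined with Proposition~\ref{prop:ext_free_est}, Part~\ref{it:time_space} ($n=2$, $p=q=2$), applied to the product operator built from slightly fattened frequency projections $\widetilde P_N$ satisfying $\widetilde P_NP_N=P_N$ and then specialised to $u_i\mapsto P_{N_i}u_i$. Estimate \eqref{eq:loc_smooth} cannot be read off from Proposition~\ref{prop:ext_free_est} directly, because $\partial_x$ is unbounded, so one argues on a $U^2_S$-atom $a=\sum_k\mathds{1}_{[t_{k-1},t_k)}e^{\cdot S}\phi_{k-1}$: for each fixed $x$ the time-intervals are disjoint, so $\|\partial_x a(\cdot,x,\cdot)\|_{L^2_{t,y}}^2\le\sum_k\|\partial_x e^{\cdot S}\phi_{k-1}(\cdot,x,\cdot)\|_{L^2_{t,y}}^2\le\tfrac13\sum_k\|\phi_{k-1}\|_{L^2}^2\le\tfrac13$ by (a), uniformly in $x$, and this bound passes to general $u\in U^2_S$ through the atomic decomposition. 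Finally \eqref{eq:strichartz_v} is \eqref{eq:strichartz} composed with the embedding $V^p_{-,rc,S}\subset U^4_S$ for $p<4$ (Corollary~\ref{cor:emb_v_in_u}, conjugated by $e^{\cdot S}$); a general $v\in V^p_{-,S}$ agrees with its right-continuous modification off a countable set, which changes neither the $L^4$-norm nor raises the $V^p$-norm.

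The third step treats \eqref{eq:bil_strichartz2} by applying Proposition~\ref{prop:interpolation} once in each factor. Fix $N_1\le N_2$. Freezing $u_1\in U^2_S$, the linear operator $u_2\mapsto P_{N_1}u_1\,P_{N_2}u_2$ maps $U^2_S\to L^2(\R^3)$ with norm $\ls(N_1/N_2)^{1/2}\|P_{N_1}u_1\|_{U^2_S}$ by \eqref{eq:bil_strichartz1}, and $U^4_S\to L^2(\R^3)$ with norm $\ls\|P_{N_1}u_1\|_{U^2_S}$ by \eqref{eq:strichartz} and H\"older. Once $N_2/N_1$ exceeds a fixed absolute constant the first norm is dominated by the second, so Proposition~\ref{prop:interpolation} with $(p,q)=(2,4)$ gives
\[
  \|P_{N_1}u_1\,P_{N_2}u_2\|_{L^2}\ls (N_1/N_2)^{\frac12}\big(\ln(N_2/N_1)+1\big)\,\|P_{N_1}u_1\|_{U^2_S}\|P_{N_2}u_2\|_{V^2_S},
\]
while for bounded $N_2/N_1$ this same bound follows at once from \eqref{eq:strichartz}, H\"older, and $V^2_{-,rc,S}\subset U^4_S$. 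Now freezing $u_2\in V^2_{-,rc,S}$ and running the argument in $u_1$ --- with the last display furnishing the $U^2_S$-bound and \eqref{eq:strichartz}, H\"older, $V^2_{-,rc,S}\subset U^4_S$ the $U^4_S$-bound --- introduces the second logarithm and yields \eqref{eq:bil_strichartz2}, the non-right-continuous case being handled as in the second step.

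I expect the main obstacle to be the free bilinear estimate (b) with the $(N_1/N_2)^{1/2}$ gain: the transversality and change-of-variables analysis driven by the resonance identity, including the delicate region of small output frequency $|\xi_1+\xi_2|$. A secondary point requiring care is the bookkeeping in the third step, where one must verify the hypothesis $C_p\le C_q$ of Proposition~\ref{prop:interpolation} --- true once $N_2/N_1$ is large, the complementary bounded range being trivial --- and check that each of the two applications contributes just one logarithmic factor.
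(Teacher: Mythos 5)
Your argument follows the same overall route as the paper: establish the free estimates, transfer them to $U^p_S$ via Proposition~\ref{prop:ext_free_est}, pass to $V^p_{-,S}$ via Corollary~\ref{cor:emb_v_in_u} and right-continuous modification, and obtain the logarithmic bilinear bound \eqref{eq:bil_strichartz2} by two successive applications of Proposition~\ref{prop:interpolation}, one in each factor, each yielding one logarithm. The only deviations are cosmetic: the paper cites Proposition 2.3 of Saut \cite{Saut93}, Lemma 3.2 of Kenig--Ziesler \cite{KenigZiesler05}, and Theorem 3.3 of \cite{Hadac07} (plus a rescaling to pass from the local-in-time cut-off estimate to the global one) for the free inputs that you re-derive sketchily, and the paper does invoke Proposition~\ref{prop:ext_free_est} for \eqref{eq:loc_smooth} with $n=1$, $T_0=\partial_x$, $r=2$ --- the atomic argument you give is just the content of that proposition unwound in this instance, so your claim that it ``cannot be read off directly'' is a slight overstatement, though harmless.
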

\begin{proof}
  Proposition 2.3 of \cite{Saut93} and Lemma 3.2 of
  \cite{KenigZiesler05} show that the estimates \eqref{eq:strichartz}
  and \eqref{eq:loc_smooth} hold true for free solutions.  Thus, the
  claims \eqref{eq:strichartz} and \eqref{eq:loc_smooth} follow from
  Proposition~\ref{prop:ext_free_est}. Then, \eqref{eq:strichartz_v} follows from
  Corollary~\ref{cor:emb_v_in_u} and the observation that
  $v\in V^p_{-,S}$ coincides a.e.\ with its right-continuous variant.
  In order to prove
  \eqref{eq:bil_strichartz1}, let $u_i=e^{tS}\phi_i$ ($i=1,2$) be free
  solutions, $\phi_i \in L^2(\R^2)$. With the smooth cutoff in time
  $\chi$ we obtain
  \begin{align*}
    & \|P_{N_1}u_1 P_{N_2}u_2\|_{L^2([-1,1]\times\R^2)}\\\leq & \|\chi
    P_{N_1}u_1 \chi P_{N_2}u_2\|_{L^2(\R^3)}\ls
    \left(\frac{N_1}{N_2}\right)^{\frac{1}{2}}
    \|P_{N_1}\phi_1\|_{L^2}\|P_{N_2}\phi_2\|_{L^2}
  \end{align*}
  which is an immediate consequence of \cite{Hadac07}, Theorem 3.3. By
  rescaling it follows
  \begin{equation*}
    \|P_{N_1}u_1 P_{N_2}u_2\|_{L^2(\R^3)}\ls
    \left(\frac{N_1}{N_2}\right)^{\frac{1}{2}}
    \|P_{N_1}\phi_1\|_{L^2}\|P_{N_2}\phi_2\|_{L^2}
  \end{equation*}
  and we may apply Proposition~\ref{prop:ext_free_est}.

  Now, the estimate \eqref{eq:bil_strichartz2} follows from
  interpolation between \eqref{eq:strichartz} and
  \eqref{eq:bil_strichartz1} via Proposition~\ref{prop:interpolation}
  and again replace $v\in V^p_{-,S}$ by its right-continuous variant.
\end{proof}

\begin{definition}\label{def:res_space}Let $s\leq 0$.
  \begin{enumerate}
  \item\label{it:hom_res_space_v} Define $\dot Y^s$ as the closure of
    all $u\in C(\R;H^{1,1}(\R^2))\cap V_{-,rc,S}^2$ such that
    \begin{equation}\label{eq:hom_res_space_v}
      \|u\|_{\dot Y^s}:= \left(\sum_{N} N^{2s} \|P_N u\|_{V^2_S}^2\right)^{\frac{1}{2}}<\infty,
    \end{equation}
    in the space $C(\R;\dot H^{s,0}(\R^2))$ with respect to the
    $\|\cdot\|_{\dot Y^s}$-norm.
  \item\label{it:hom_res_space} Define $\dot Z^s$ as the closure of
    all $u\in C(\R;H^{1,1}(\R^2))\cap U_{S}^2$ such that
    \begin{equation}\label{eq:hom_res_space}
      \|u\|_{\dot Z^s}:= \left(\sum_{N} N^{2s} \|P_N u\|_{U^2_S}^2\right)^{\frac{1}{2}}<\infty,
    \end{equation}
    in the space $C(\R;\dot H^{s,0}(\R^2))$ with respect to the
    $\|\cdot\|_{\dot Z^s}$-norm.
  \item\label{it:inhom_res_space} Define $X$ as the closure of all
    $u\in C(\R;H^{1,1}(\R^2))\cap U^2_S$ such that
    \begin{equation}\label{eq:inhom_res_space1}
      \|u\|_{X}:=\|u\|_{\dot Z^0}+\|u\|_{\hx 0  1  1}<\infty,
    \end{equation}
    in the space $C(\R;L^2(\R^2))$ with respect to the
    $\|\cdot\|_{X}$-norm.  Define $Z^s:=\dot Z^s+X$, with norm
    \begin{equation}\label{eq:inhom_res_space}
      \|u\|_{Z^s}=\inf\{\|u_1\|_{\dot Z^s}+\|u_2\|_X \mid u=u_1+u_2\}.
    \end{equation}
  \end{enumerate}
\end{definition}

\begin{remark}\label{rem:restr}
  Let $E$ be a Banach space of continuous functions $f:\R\to H$, for
  some Hilbert space $H$.  We also consider the corresponding
  restriction space to the interval $I\subset \R$ by
  \begin{equation*}
    E(I)=\{u\in C(I,H)\mid \exists \ \widetilde{u}\in E: \
    \widetilde{u}(t)=u(t),\ t \in I \}
  \end{equation*}
  endowed with the norm $\|u\|_{E(I)}=\inf\{\|\widetilde{u}\|_E \mid
  \tilde{u}:\widetilde{u}(t)=u(t), t \in I\}$.  Obviously, $E(I)$ is
  also a Banach space.
\end{remark}

\begin{prop}\label{prop:cont_no}
  \begin{enumerate}
  \item\label{it:cont_no1} Let $T>0$ and $u\in \dot Y^{s}([0,T])$,
    $u(0)=0$. Then, for every $\eps>0$ there exists $0\leq T'\leq T$
    such that $\|u\|_{\dot Y^{s}([0,T'])}<\eps$.
  \item\label{it:cont_no2} Let $T>0$ and $u\in \dot Z^{s}([0,T])$,
    $u(0)=0$. Then, for every $\eps>0$ there exists $0\leq T'\leq T$
    such that $\|u\|_{\dot Z^{s}([0,T'])}<\eps$.
  \end{enumerate}
\end{prop}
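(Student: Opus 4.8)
The plan is to exhibit, for each $\eps>0$ and all sufficiently small $T'>0$, an extension of $u|_{[0,T']}$ of $\dot Z^s$-norm (resp.\ $\dot Y^s$-norm) less than $\eps$; by the definition of the restriction norm this is all that is needed. First I would fix an extension $\widetilde u\in\dot Z^s$ of $u$ and record that $\widetilde u(0)=u(0)=0$, so that $(P_N\widetilde u)(0)=0$ for every dyadic $N$ and hence $w_N:=e^{-\cdot S}P_N\widetilde u\in U^2$ satisfies $w_N(0)=0$. The natural candidate extension stops the evolution outside $[0,T']$: writing $(\,\cdot\,)^{T'}$ for the operation of freezing a function at time $T'$, set $R_{T'}w_N:=(\mathds{1}_{[0,\infty)}w_N)^{T'}=\mathds{1}_{[0,T')}w_N+\mathds{1}_{[T',\infty)}w_N(T')$ and $\widehat u_{T'}:=\sum_N e^{\cdot S}R_{T'}w_N$. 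Because $w_N(0)=0$, each $e^{\cdot S}R_{T'}w_N$ is continuous and equals $P_N\widetilde u$ on $[0,T']$, so $\widehat u_{T'}$ is continuous and equals $u$ on $[0,T']$; and since truncation to a half-line and freezing are each contractions on $U^2$ (they send a $U^2$-atom to a scalar multiple of modulus $\le1$ of an atom), one has $\|R_{T'}w_N\|_{U^2}\le\|w_N\|_{U^2}$ and thus $\|\widehat u_{T'}\|_{\dot Z^s}^2=\sum_N N^{2s}\|R_{T'}w_N\|_{U^2}^2\le\|\widetilde u\|_{\dot Z^s}^2<\infty$, uniformly in $T'$.

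\textbf{Reduction.} To see that $\widehat u_{T'}$ genuinely lies in the closure $\dot Z^s$ I would argue by density: the functions $g$ in the defining class of $\dot Z^s$ with $g(0)=0$ are dense in the closed subspace $\{f\in\dot Z^s:f(0)=0\}$ — subtract $\psi(t)e^{tS}f(0)$ for a fixed $\psi\in C_0^\infty$ with $\psi(0)=1$, using $\psi(\,\cdot\,)f(0)\in V^1_{-,rc}\subset U^2$ with norm $\ls\|f(0)\|_{L^2}$ from Corollary~\ref{cor:emb_v_in_u} and the embedding $\dot Z^s\hookrightarrow C(\R;\dot H^{s,0})$ — and $R_{T'}$ sends such a $g$ back into the defining class, so $\widehat u_{T'}=\lim_n R_{T'}g_n\in\dot Z^s$ for a sequence $g_n\to\widetilde u$. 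It then remains to prove $\|\widehat u_{T'}\|_{\dot Z^s}\to 0$ as $T'\downarrow 0$. The summands $N^{2s}\|R_{T'}w_N\|_{U^2}^2$ are dominated by $N^{2s}\|w_N\|_{U^2}^2$, summable in $N$ uniformly in $T'$, so by dominated convergence it suffices to show $\|R_{T'}w_N\|_{U^2}\to 0$ for each fixed $N$; and since $\|R_{T'}w_N\|_{U^2}\le\|\mathds{1}_{[0,T')}w_N\|_{U^2}+\|w_N(T')\|_{L^2}$ with $\|w_N(T')\|_{L^2}\to\|w_N(0)\|_{L^2}=0$ by right-continuity (Proposition~\ref{prop:u}, Part~\ref{it:u_right_cont}), everything reduces to the claim that \emph{$w\in U^2$ with $w(0)=0$ implies $\|\mathds{1}_{[0,T')}w\|_{U^2}\to 0$ as $T'\downarrow 0$.}

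\textbf{Proof of the claim and Part~\ref{it:cont_no1}.} For the claim: given $\eta>0$, choose an atomic decomposition $w=\sum_j\lambda_j a_j$ with $\sum_j|\lambda_j|\le\|w\|_{U^2}+\eta$ and an index $J$ with $\|w-S_J\|_{U^2}<\eta$ for $S_J:=\sum_{j\le J}\lambda_j a_j$. Each $a_j$, a right-continuous step function, is constant on some interval $[0,\delta_j)$ with $\delta_j>0$; put $\delta_*:=\min_{j\le J}\delta_j>0$. For $0<T'<\delta_*$ one has $\mathds{1}_{[0,T')}a_j=\mathds{1}_{[0,T')}a_j(0)$, so $\mathds{1}_{[0,T')}S_J=\mathds{1}_{[0,T')}S_J(0)$ and $\|\mathds{1}_{[0,T')}S_J\|_{U^2}=\|S_J(0)\|_{L^2}\le\|S_J-w\|_{L^\infty(\R;L^2)}\le\|S_J-w\|_{U^2}<\eta$ (using $w(0)=0$ and $U^2\hookrightarrow L^\infty(\R;L^2)$, Proposition~\ref{prop:u}, Part~\ref{it:u_emb}). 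As truncation to $[0,T')$ is a contraction on $U^2$, this yields $\|\mathds{1}_{[0,T')}w\|_{U^2}<2\eta$ for all such $T'$, and since $\eta$ was arbitrary the claim — hence Part~\ref{it:cont_no2} — follows. Part~\ref{it:cont_no1} is the same argument with $V^2_S,\dot Y^s$ replacing $U^2_S,\dot Z^s$, where the role of the claim is played by the bound $\|R_{T'}w_N\|_{V^2}\le\operatorname{Var}_2(w_N;[0,T'])$ (take a partition refining $\{0,T'\}$; since $R_{T'}w_N$ vanishes for $t\le 0$ and is constant for $t\ge T'$, only increments of $w_N$ across $[0,T']$ contribute) together with the elementary fact that the $2$-variation over $[0,T']$ of a function of bounded $2$-variation that is right-continuous at $0$ tends to $0$ as $T'\downarrow 0$ (superadditivity of $\operatorname{Var}_2(\,\cdot\,)^2$ over adjacent intervals).

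\textbf{Main obstacle.} I expect the only real difficulty to be the claim in the $U^2$ case. The tempting shortcut via $\|\mathds{1}_{[0,T')}w\|_{L^\infty(\R;L^2)}\to 0$ (which is true by right-continuity) fails, because smallness in $L^\infty(\R;L^2)$ does not bound the $U^2$-norm — a function of tiny amplitude can still carry large $2$-variation. The atomic argument recovers what is missing, namely the cancellation at $t=0$ (which is exactly why $w(0)=0$, not mere boundedness, is the correct hypothesis), through the inequality $\|S_J(0)\|_{L^2}\le\|w-S_J\|_{U^2}$. A secondary, bookkeeping issue is to ensure that the stopped function lies in the closure $\dot Z^s$ and not merely in the larger space of finite-$\dot Z^s$-norm functions; this is what the density step is for.
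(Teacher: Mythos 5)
Your proposal is correct, and it reaches the conclusion by a route that is close in spirit to the paper's but organized differently. The paper first performs a dyadic frequency truncation, writing $\widetilde u=\widetilde u_h+\widetilde u_r$ with $\widetilde u_r$ small in $\dot Y^0$ (resp.\ $\dot Z^0$) and $\widetilde u_h$ supported on finitely many dyadic annuli, so that only finitely many frequency blocks remain; it then applies a single near-optimal partition (for $\dot Y^s$) or a single $U^2_S$-atomic decomposition of $\widetilde u_h$ (for $\dot Z^s$), chooses $T'$ to be the first positive partition point (resp.\ a time at which the finitely many leading atoms are still constant), and freezes there. You instead keep all dyadic pieces, apply the linear freezing/contraction operator $R_{T'}$ blockwise, use $\ell^2_N$-dominated convergence to reduce to a single $U^2$ (or $V^2$) function $w$ with $w(0)=0$, and prove the one-variable claim $\|\mathds{1}_{[0,T')}w\|_{U^2}\to 0$ via an atomic decomposition (with the superadditivity argument playing the same role on the $V^2$ side). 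Your dominated-convergence-in-$N$ step is precisely what replaces the paper's frequency truncation, and your atomic argument for the Claim is the same mechanism as the paper's choice of $T'$ below all breakpoints of the leading atoms, together with the bound $\|S_J(0)\|_{L^2}\le\|w-S_J\|_{U^2}$ exploiting $w(0)=0$.

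Two small remarks. First, you are more careful than the paper about ensuring that the frozen extension actually lies in the \emph{closure} $\dot Z^s$; your density step (subtracting $\psi(t)e^{tS}f_n(0)$ from an approximating sequence $f_n$ and observing that freezing preserves the defining class) is a correct way to handle this, and it is a point the paper passes over. Second, your parenthetical assertion that ``each $e^{\cdot S}R_{T'}w_N$ is continuous'' is not quite accurate for the raw extension $\widetilde u\in\dot Z^s$: $w_N\in U^2$ may fail to be left-continuous at $T'$, so $R_{T'}w_N$ may jump there. This does not affect your proof, because the continuity is only needed for the approximating $g_n$ in the defining class (which are genuinely continuous), and that is where you use it; but the sentence as written overstates what holds for $\widetilde u$ itself. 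Everything else — the contraction property of truncation and of freezing on $U^2$-atoms, the derivation $\|S_J(0)\|_{L^2}\le\|w-S_J\|_{L^\infty}\le\|w-S_J\|_{U^2}$, and the superadditivity/right-continuity argument for the $V^2$ case — is sound.
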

\begin{proof} It is enough to consider $s=0$. Assume $u\in \dot
  Y^{0}([0,T])$ with $u(0)=0$ and let $\widetilde{u}\in \dot Y^{0}$ be
  an extension. There exists a decomposition
  $\widetilde{u}=\widetilde{u}_h+\widetilde{u}_r$ with
  \begin{equation} \label{eq:dec} \widetilde{u}_h = \sum_{M_0\leq
      N\leq M_1} P_N \widetilde{u},\quad \| \widetilde{u}_r
    \|_{\dot{Y}^0} < \eps.
  \end{equation}
  Due to the right-continuity of $\widetilde{u}_h$ there exists
  $0<T_0\leq T$ with
  $\|\widetilde{u}_h\|_{L^\infty([0,T_0];L^2)}<\eps$.  Moreover, there
  exists $\mathfrak{t}=\{t_k\}_{k=0}^K\in \mathcal{Z}$ such that $0\in
  \mathfrak{t}$ and
  \begin{equation*}
    \left(\sum_{k=1}^{K} \|e^{-t_{k}S}\widetilde{u}_h(t_{k})-
      e^{-t_{k-1}S}\widetilde{u}_h(t_{k-1})\|^2_{L^2}\right)^{\frac12}
    >\|\widetilde{u}_h\|_{V^2_S}-\eps.
  \end{equation*}
  We define $T':=\min\{t_k\mid t_k>0\}$ and the continuous extension
  \begin{equation} \label{eq:cont_ext} \widetilde{u}_{h,T'} :=
    \mathds{1}_{[0,T')} \widetilde{u}_h + \mathds{1}_{[T',\infty)}
    \widetilde{u}_h(T').
  \end{equation}
  Then, $\|\widetilde{u}_{h,T'}\|_{V^2_S}<\eps$.
  Finally,
  \begin{equation*}
    \|u_h\|_{\dot{Y}^0([0,T'])} \leq \|\widetilde{u}_{h,T'}\|_{\dot{Y}^0} \leq 
    \left( 
      \sum_{M_0/2\leq N \leq 2M_1} \|P_N \widetilde{u}_{h,T'}\|_{V^2_S}^2
    \right)^{\frac12} \ls \eps.
  \end{equation*}

  In order to prove Part~\ref{it:cont_no2} let us assume that $u\in
  \dot Z^{0}([0,T])$ with $u(0)=0$ and let $\widetilde{u}\in \dot
  Z^{0}$ be an extension. We perform a similar decomposition as in
  \eqref{eq:dec}.  Since $\widetilde{u}_h\in U^2_S$, we have an atomic
  decomposition
  \begin{equation*}
    \widetilde{u}_h=\sum_{k=1}^\infty \lambda_k e^{\cdot S} a_k \quad \text{s.th. }
    \sum_{k=k_0+1}^{\infty}|\lambda_k| <\eps.
  \end{equation*}
  There exists $0<T'\leq T$, such that all $a_k$ ($k=1,\ldots, k_0$)
  are constant on $[0,T']$.  Define $\lambda_0=\|\sum_{k=1}^{k_0}
  \lambda_k a_k(0)\|_{L^2}$ and $\phi=\lambda_0^{-1}\sum_{k=1}^{k_0}
  \lambda_k a_k(0)$ as well as the atom
  $a_0=\mathds{1}_{[0,\infty)}\phi$. Then,
  \begin{equation*}
    \lambda_0
    =\left\|u(0)-\sum_{k=k_0+1}^{\infty} \lambda_k a_k(0)\right\|_{L^2}
    \leq \sum_{k=k_0+1}^{\infty}|\lambda_k|< \eps.
  \end{equation*}
  For $f(t):=\lambda_0 e^{tS} a_0(t)+\sum_{k=k_0+1}^{\infty} \lambda_k
  e^{tS} a_k(t)$, we define the continuous function
  $f_{T'}=\mathds{1}_{[0,T')}f+\mathds{1}_{[T',\infty)}f(T'-)$.  It
  holds $u_h(t)=\widetilde{u}_h(t)=f_{T'}(t)$ for $t\in [0,T']$ and
  therefore $\|u_h\|_{\dot{Z}^0([0,T'])}\leq \|f_{T'}\|_{\dot{Z}^0}
  \ls \eps$.
\end{proof}
\section{Bilinear estimates}\label{sect:bil_est}
Let $T\in (0,\infty]$. In the following, we will give estimates on the
Duhamel term
\begin{equation}\label{eq:duhamel}
  I_T(u_1,u_2)(t):=\int_0^t \mathds{1}_{[0,T)} e^{(t-t')S} \partial_x(u_1 u_2)(t') dt',
\end{equation}
which is initially defined on $C(\R;H^{1,1}(\R^2))$, and the estimates
will eventually allow us to extend this bilinear operator to larger
function spaces.
\subsection{The homogeneous case}
We start with an estimate on dyadic pieces. For a dyadic number $N$
let $A_{N}:=\{(\tau,\xi,\eta) \mid \frac12 N \leq |\xi|\leq 2N \}$.
\begin{prop}\label{prop:bil_est_dyadic}
  There exists $C>0$, such that for all $T>0$ and
  $u_{N_1},v_{N_2},w_{N_3}\in V^2_{-,S}$ such that $\supp
  \widehat{u_{N_1}}\subset A_{N_1}$, $\supp \widehat{v_{N_2}}\subset
  A_{N_2}$, $\supp \widehat{w_{N_2}}\subset A_{N_3}$ for dyadic
  numbers $N_1, N_2, N_3$ the following holds true:

  If $N_2\sim N_3$, then
  \begin{equation}\label{eq:bil_est_dyadic_a}
    \begin{split}
      &\left|\sum_{N_1\ls N_2} \int_0^T \int_{\R^2} u_{N_1} v_{N_2} w_{N_3} dx dy dt \right|\\
      \leq &C\left(\sum_{N_1\ls
          N_2}N_1^{-1}\|u_{N_1}\|^2_{V_S^2}\right)^{\frac12}
      N_2^{-\frac12}\|v_{N_2}\|_{V_S^2}N_3^{-\frac12}\|w_{N_3}\|_{V^2_S},
    \end{split}
  \end{equation}
  and if $N_1\sim N_2$, then
  \begin{equation}\label{eq:bil_est_dyadic_b}
    \begin{split}
      &\left( \sum_{N_3\ls N_2} N_3 \sup_{\|w_{N_3}\|_{V^2_S}=1}
        \left|\int_0^T \int_{\R^2} u_{N_1} v_{N_2} w_{N_3} dx dy dt
        \right|^2\right)^{\frac12}\\
      \leq &CN_1^{-\frac12}\|u_{N_1}\|_{V_S^2}
      N_2^{-\frac12}\|v_{N_2}\|_{V_S^2}.
    \end{split}
  \end{equation}
\end{prop}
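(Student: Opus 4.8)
The plan is to estimate the trilinear form $J:=\int_0^T\int_{\R^2}u_{N_1}v_{N_2}w_{N_3}\,dx\,dy\,dt$ by decomposing each factor into modulations $Q^S_M$ and distinguishing cases according to which factor carries the largest modulation, the relevant lower bound coming from the KP-II resonance identity. Writing $\tau_i=\xi_i^3-\eta_i^2\xi_i^{-1}+\sigma_i$ on the respective space--time Fourier supports and using $\sum\tau_i=\sum\xi_i=\sum\eta_i=0$ on the support of the integrand, one has
\[
  \sigma_1+\sigma_2+\sigma_3
  =-\Bigl(3\xi_1\xi_2\xi_3+\frac{(\xi_1\eta_2-\xi_2\eta_1)^2}{\xi_1\xi_2\xi_3}\Bigr).
\]
For the KP-II symbol the two summands on the right have the same sign, so their moduli add and $\max_i|\sigma_i|\gtrsim|\xi_1\xi_2\xi_3|$, which is $\gtrsim N_1N_2^2$ in the situation of \eqref{eq:bil_est_dyadic_a} and $\gtrsim N_2^2N_3$ in the situation of \eqref{eq:bil_est_dyadic_b}. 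Accordingly I would split $J$ by the size and location of $M:=\max_iM_i$: in every case $M\gtrsim N_{\min}N_2^2$, where $N_{\min}$ denotes the smallest of $N_1,N_2,N_3$; one puts the factor localized to modulation $M$ into $L^2(\R^3)$ using the modulation estimate \eqref{eq:mod_est1} (which gains $M^{-1/2}$ at the cost of only the $V^2_S$-norm), absorbs the harmless indicator $\mathds{1}_{[0,T)}$ into this $L^2$-norm, and handles the product of the two remaining factors in $L^2(\R^3)$ by the bilinear Strichartz estimate in its $V^2_S$-form \eqref{eq:bil_strichartz2}. The summation over dyadic $M\gtrsim N_{\min}N_2^2$ costs only a factor $(N_{\min}N_2^2)^{-1/2}$, since $\sum_{M\gtrsim M_0}M^{-1/2}\lesssim M_0^{-1/2}$.

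When the largest modulation lies on one of the two high-frequency factors the argument closes cleanly: the surviving bilinear Strichartz then pairs the other high-frequency factor with the low-frequency one, hence carries the transversal gain $(N_{\min}/N_2)^{1/2}$ (together with a logarithm $(\ln(N_2/N_{\min})+1)^2$ from \eqref{eq:bil_strichartz2}). Collecting powers this yields, for each dyadic $N_{\min}$,
\[
  |J|\;\lesssim\;N_2^{-3/2}\bigl(\ln(N_2/N_{\min})+1\bigr)^{2}\,
  \|u_{N_1}\|_{V^2_S}\|v_{N_2}\|_{V^2_S}\|w_{N_3}\|_{V^2_S}.
\]
Since $\sum_{N_{\min}\lesssim N_2}N_{\min}\bigl(\ln(N_2/N_{\min})+1\bigr)^{4}\lesssim N_2$, a Cauchy--Schwarz in the dyadic summation variable, distributing the weight as $N_{\min}^{-1/2}\!\cdot\!N_{\min}^{1/2}$, produces exactly the $\ell^2$-structured right-hand side of \eqref{eq:bil_est_dyadic_a}; in case \eqref{eq:bil_est_dyadic_b} one first takes the supremum over $\|w_{N_3}\|_{V^2_S}=1$ and then sums the squares in $N_3$ against the weight $N_3$, which closes by the identical bookkeeping.

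The delicate case — which I expect to be the main obstacle — is when the largest modulation sits on the factor of smallest frequency ($u_{N_1}$ in \eqref{eq:bil_est_dyadic_a}, $w_{N_3}$ in \eqref{eq:bil_est_dyadic_b}): then the two remaining factors live at the comparable frequency $\sim N_2$, the bilinear Strichartz yields no gain, and the crude bound is only a constant multiple of $N_2^{-1}N_{\min}^{-1/2}$ per dyadic block, which is not square-summable against the weight $N_{\min}$. Here one must additionally exploit the transversal term $(\xi_1\eta_2-\xi_2\eta_1)^2/(\xi_1\xi_2\xi_3)$ of the resonance. I would further decompose the two high-frequency factors in modulation; the requirement that the output modulation of their product at spatial frequency $\sim N_{\min}$ be comparable to the large modulation $M$ of the low-frequency factor forces a confinement of the transverse frequencies (roughly $|\xi_1\eta_2-\xi_2\eta_1|\sim N_2(MN_{\min})^{1/2}$), and a refined bilinear $L^2$-estimate sensitive to this confinement — proved along the lines of the bilinear Strichartz estimate \eqref{eq:bil_strichartz1}, cf.\ \cite{Hadac07} — restores the missing power of $N_{\min}/N_2$. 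It is convenient to subdivide once more: for $M\gg N_{\min}N_2^2$ the modulation gain $M^{-1/2}$ already beats the loss, so only the regime $M\sim N_{\min}N_2^2$, where the transverse resonance is small and the transverse frequencies correspondingly confined, actually requires the refinement. Throughout I would pass freely between $U^2_S$ and $V^2_S$ via $V^2_{-,rc,S}\subset U^q_S$ (Corollary~\ref{cor:emb_v_in_u}) and the boundedness of $Q^S_{<M},Q^S_{\geq M}$ on these spaces (\eqref{eq:mod_est3}--\eqref{eq:mod_est4}), replacing $V^2_{-,S}$-functions by their right-continuous representatives as in the proof of Corollary~\ref{cor:disp_est}; the logarithmic losses collected this way are always dominated by the polynomial gains in $N_{\min}/N_2$.
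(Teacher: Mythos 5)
Your overall architecture is right and matches the paper's: localize each factor in modulation, kill the all-low-modulation piece via the resonance identity with the observation that both summands on the right have the same sign (so $\max_i|\sigma_i|\gtrsim|\xi_1\xi_2\xi_3|\gtrsim N_1N_2N_3$), then case-split according to which factor carries the high modulation. Your treatment of the cases where the high modulation falls on a factor of frequency $\sim N_2$ is also essentially what the paper does: $L^2$ on the high-modulation factor via \eqref{eq:mod_est1}/\eqref{eq:mod_est2}, bilinear Strichartz \eqref{eq:bil_strichartz2} on the other two, and a Cauchy--Schwarz in the dyadic parameter with the weight split $N_{\min}^{-1/2}\cdot N_{\min}^{1/2}$; the paper carries out the analogous computation using the unlogged exponent $(N_1/N_3)^{1/4}$ in place of your $(N_1/N_3)^{1/2}(\ln+1)^2$, but this is cosmetic.

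Where your proposal breaks is precisely where you flag the difficulty: the case of high modulation on the small-frequency factor. You propose a refined bilinear estimate that exploits confinement of the transverse frequency $\xi_1\eta_2-\xi_2\eta_1$. That route is not substantiated and, as sketched, does not obviously close: when $M\sim N_{\min}N_2^2$ the resonance $\sum\sigma_i = 3\xi_1\xi_2\xi_3+(\xi_1\eta_2-\xi_2\eta_1)^2/(\xi_1\xi_2\xi_3)$ is already saturated by the cubic term $3\xi_1\xi_2\xi_3\sim N_1N_2N_3$ alone, so the requirement $\max_i|\sigma_i|\geq M$ imposes \emph{no} nontrivial upper bound on the transverse term when the signs agree; the range $|\xi_1\eta_2-\xi_2\eta_1|\lesssim N_2(MN_{\min})^{1/2}$ you write down is just the generic size, not a genuine confinement yielding an extra factor of $(N_{\min}/N_2)^{1/2}$. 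You would need to actually prove a new bilinear estimate, and you have not done so.

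The paper avoids this entirely with an $L^2$-orthogonality argument that you miss, and which is the actual crux of Proposition~\ref{prop:bil_est_dyadic}. For \eqref{eq:bil_est_dyadic_a} with the high modulation on $u_{N_1}$: since the $A_{N_1}$ are nearly disjoint annuli, $\|\sum_{N_1}Q^S_{\geq M}\widetilde{u}_{N_1}\|_{L^2}^2\lesssim\sum_{N_1}\|Q^S_{\geq M}\widetilde{u}_{N_1}\|_{L^2}^2\lesssim\sum_{N_1}(N_1N_2N_3)^{-1}\|u_{N_1}\|_{V^2_S}^2$, and the remaining two factors are dispatched by $L^4\times L^4$ (which, since $N_2\sim N_3$, is exactly as good as bilinear Strichartz). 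The $\ell^2$ structure on the right-hand side of \eqref{eq:bil_est_dyadic_a} then appears for free, with no Cauchy--Schwarz loss over the infinite dyadic range. For \eqref{eq:bil_est_dyadic_b} with the high modulation on $w_{N_3}$: one inserts the projection $P_{A_{N_3}}$ on the product $Q^S_1\widetilde{u}_{N_1}Q^S_2\widetilde{v}_{N_2}$, uses $N_3\|Q^S_{\geq M}\widetilde{w}_{N_3}\|_{L^2}^2\lesssim N_3 M^{-1}\|w_{N_3}\|_{V^2_S}^2 = 8(N_1N_2)^{-1}\|w_{N_3}\|_{V^2_S}^2$ (note the $N_3$'s cancel, which is essential), and then $\sum_{N_3}\|P_{A_{N_3}}(Q^S_1\widetilde{u}_{N_1}Q^S_2\widetilde{v}_{N_2})\|_{L^2}^2\lesssim\|Q^S_1\widetilde{u}_{N_1}Q^S_2\widetilde{v}_{N_2}\|_{L^2}^2$ by almost orthogonality of the $P_{A_{N_3}}$, followed by $L^4\times L^4$. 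So the logarithmic divergence you correctly worried about is removed not by a sharper bilinear estimate but by recognizing that the dyadic sum is an $\ell^2$ sum of orthogonal projections. This orthogonality step is the missing idea in your proposal.
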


\begin{proof}
  We define
  $\widetilde{u}_{N_1}=\mathds{1}_{[0,T)}u_{N_1},\widetilde{v}_{N_2}=
  \mathds{1}_{[0,T)}v_{N_2}, \widetilde{w}_{N_3}=
  \mathds{1}_{[0,T)}w_{N_3}$.  Then, we decompose
  $Id=Q_{<M} +Q_{\geq M}$, where $M$ will be chosen later, and
  we divide the integrals on the left hand side of
  \eqref{eq:bil_est_dyadic_a} into eight pieces of the form
  \begin{equation*}
    \int_{\R^3} Q^S_1\widetilde{u}_{N_1} Q^S_2\widetilde{v}_{N_2} 
    Q^S_3\widetilde{w}_{N_3} dx dy dt
  \end{equation*}
  with $Q^S_i\in\{Q^S_{\geq M},Q^S_{< M}\}$, $i=1,2,3$.  These are
  well-defined because of the $L^4$ Strichartz estimate
  \eqref{eq:strichartz_v} and \eqref{eq:mod_est3}.
  
  Let us first consider the case $Q^S_i=Q^S_{< M}$ for $1\leq i\leq
  3$. By using Plancherel's Theorem we see
  \begin{align} \label{eq:low_mod} \int_{\R^3} &
    Q^S_{<M}\widetilde{u}_{N_1} Q^S_{<M}\widetilde{v}_{N_2}
    Q^S_{<M}\widetilde{w}_{N_3} dx dy dt\\ \notag & =
    c(\widehat{Q^S_{<M}\widetilde{u}_{N_1}}*\widehat{Q^S_{<M}\widetilde{v}_{N_2}}
    *\widehat{Q^S_{<M}\widetilde{w}_{N_3}})(0)
  \end{align}
  Now, if we let $\mu_i=(\tau_i,\xi_i,\eta_i)$, $i=1,2,3$, be the
  Fourier variables corresponding to
  $\widehat{Q^S_{<M}\widetilde{u}_{N_1}}$,
  $\widehat{Q^S_{<M}\widetilde{v}_{N_2}}$, and
  $\widehat{Q^S_{<M}\widetilde{w}_{N_3}}$ respectively, then the only
  frequencies which contribute to \eqref{eq:low_mod} are those for
  which we have $\mu_1+\mu_2+\mu_3=0$.  For
  $\lambda_i=\tau_i-\xi_i^3+\frac{\eta_i^2}{\xi_i}$, $i=1,2,3$, we
  have that $|\lambda_i|<M$ within the domain of integration because of the cut off operator
  $Q^S_{<M}$.  We also have $|\xi_i|\geq N_i/2$ due to the cut off
  operators $P_{N_i}$.  By the well-known resonance identity
  \begin{equation}\label{eq:res}
    \lambda_1+\lambda_2+\lambda_3
    =3\xi_1\xi_2\xi_3+\frac{(\xi_2\eta_1-\eta_2\xi_1)^2}{\xi_1\xi_2\xi_3},
  \end{equation}
  we get
  \begin{equation}\label{eq:res1}
    \frac18 N_1 N_2 N_3 \leq |\xi_1||\xi_2||\xi_3| \leq
    \max(|\lambda_1|,|\lambda_2|,|\lambda_3|) < M 
  \end{equation}
  within the domain of integration.
  Therefore, if we set $M=8^{-1}N_1N_2N_3$ (our notation suppresses the dependence
  on $N_1,N_2,N_3$),
  it follows that
  \begin{equation*}
    \int_{\R^3} Q^S_{<M}\widetilde{u}_{N_1} Q^S_{<M}\widetilde{v}_{N_2} 
    Q^S_{<M}\widetilde{w}_{N_3} dx dy dt = 0.
  \end{equation*}

  So, let us now consider the case that $Q^S_i=Q^S_{\geq M}$ for some
  $1\leq i\leq 3$ and start with the case $i=1$.  Using the $L^4$
  Strichartz estimate \eqref{eq:strichartz_v} we obtain for
  $Q^S_2,Q^S_3\in\{Q^S_{\geq M},Q^S_{< M}\}$
  \begin{align}
    &\left|\sum_{N_1\ls N_2} \int_{\R^3} Q^S_{\geq
        M}\widetilde{u}_{N_1} Q^S_2\widetilde{v}_{N_2}
      Q^S_3\widetilde{w}_{N_3}
      dx dy dt \right|\nonumber\\
    \leq & \left\|\sum_{N_1\ls N_2} Q^S_{\geq M}\widetilde{u}_{N_1}
    \right\|_{L^2(\R^3)}\| Q^S_2\widetilde{v}_{N_2}\|_{L^4(\R^3)}
    \|  Q^S_3\widetilde{w}_{N_3}\|_{L^4(\R^3)}\label{eq:cauchy_a}\\
    \leq &C \left(\sum_{N_1\ls N_2}
      \frac{1}{N_1N_2N_3}\|\widetilde{u}_{N_1}\|^2_{V_S^2}\right)^{\frac12}
    \| Q^S_2\widetilde{v}_{N_2}\|_{V^2_{S}}\|
    Q^S_3\widetilde{w}_{N_3}\|_{V^2_{S}},\nonumber
  \end{align}
  where we used the $L^2$-orthogonality and \eqref{eq:mod_est2} on the
  first factor.  Now, we exploit \eqref{eq:mod_est3} and
  \begin{equation*}
    \|\mathds{1}_{[0,T)} f\|_{V^2_S}\leq 2 \|f\|_{V^2_S} \; ,\quad f\in V^2_{S}
  \end{equation*}
  and the claim is proved.

  We turn to the case $i=2$.  Using the interpolated bilinear
  Strichartz estimate \eqref{eq:bil_strichartz2} and
  Corollary~\ref{cor:mod_est}, we find for $Q^S_1,Q^S_3\in\{Q^S_{\geq
    M},Q^S_{< M}\}$
  \begin{align*}
    &\left|\int_{\R^3} Q^S_{1}\widetilde{u}_{N_1} Q^S_{\geq M}
      \widetilde{v}_{N_2} Q^S_3\widetilde{w}_{N_3}
      dx dy dt \right|\\
    \leq & \|Q^S_{\geq M}\widetilde{v}_{N_2} \|_{L^2(\R^3)}
    \left(\frac{N_1}{N_3}\right)^{\frac14}\|
    Q^S_1\widetilde{u}_{N_1}\|_{V^2_{S}}
    \|  Q^S_3\widetilde{w}_{N_3}\|_{V^2_{S}}\\
    \leq &\frac{C}{(N_1N_2N_3)^{\frac12}}\|v_{N_2} \|_{V^2_{S}}
    \left(\frac{N_1}{N_3}\right)^{\frac14}\|u_{N_1}\|_{V^2_{S}}
    \|w_{N_3}\|_{V^2_{S}}
  \end{align*}
  which is easily summed up with respect to $N_1\ls N_2$, because
  $N_2\sim N_3$.
  
  Finally, the case $i=3$ is proved in exactly the same way as $i=2$
  and the proof of \eqref{eq:bil_est_dyadic_a} is complete.

  In order to prove \eqref{eq:bil_est_dyadic_b}, we use the same
  decomposition as above. The case $i=1,2$, i.e.\ if the modulation on
  the first or second factor is high, we use the bilinear Strichartz
  estimate \eqref{eq:bil_strichartz2} and the claim follows similar to
  the case $i=2,3$ above.  It remains to consider the case $i=3$,
  where the modulation on the third factor is high.  Let $P_{N_3}$ be
  the projection operator onto the set $A_{N_3}$, which is symmetric.
  Therefore, using $L^2$-orthogonality and \eqref{eq:mod_est3} we
  obtain
  \begin{align}
    &\left( \sum_{N_3\ls N_2} N_3 \sup_{\|w_{N_3}\|_{V^2_S}=1}
      \left|\int_{\R^3} Q^S_{1}\widetilde{u}_{N_1} Q^S_{2}
        \widetilde{v}_{N_2} Q^S_{\geq M}\widetilde{w}_{N_3}
        dx dy dt \right|^2\right)^{\frac12}\nonumber\\
    \leq &\left( \sum_{N_3\ls N_2} \left\|P_{A_{N_3}}(
        Q^S_{1}\widetilde{u}_{N_1} Q^S_{2}
        \widetilde{v}_{N_2})\right\|^2_{L^2}
      \sup_{\|w_{N_3}\|_{V^2_S}=1} N_3 \|Q^S_{\geq
        M}\widetilde{w}_{N_3}\|^2_{L^2}
    \right)^{\frac12}\nonumber\\
    \ls & (N_1N_2)^{-\frac12} \left\|Q^S_{1}\widetilde{u}_{N_1}
      Q^S_{2} \widetilde{v}_{N_2}\right\|_{L^2}.
    \label{eq:cauchy_b}
  \end{align}
  The claim now follows from the standard $L^4$ Strichartz estimate
  \eqref{eq:strichartz} and Corollary~\ref{cor:mod_est}.
\end{proof}

\begin{theorem}\label{thm:bil_est_hom1}
  There exists $C>0$, such that for all $0<T<\infty$ and for all
  $u_1,u_2\in \dot Z^{-\frac{1}{2}}\cap C(\R;H^{1,1}(\R^2))$ it holds
  \begin{equation}\label{eq:bil_est_hom1}
    \left\|I_T(u_1,u_2)\right\|_{\dot Z^{-\frac{1}{2}}}
    \leq C \prod_{j=1}^2\|u_j\|_{\dot Y^{-\frac{1}{2}}} \;,
  \end{equation}
  and $I_T$ continuously extends to a bilinear operator
  \begin{equation*}
    I_T:\dot Y^{-\frac{1}{2}}\times \dot Y^{-\frac{1}{2}}\to \dot Z^{-\frac{1}{2}}.
  \end{equation*}
\end{theorem}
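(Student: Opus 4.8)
The plan is to dualize and reduce \eqref{eq:bil_est_hom1} to a trilinear estimate. Fix $u_1,u_2$ as in the statement, i.e.\ in $\dot Z^{-\frac12}\cap C(\R;H^{1,1}(\R^2))$. For each dyadic $N$ the function $t\mapsto e^{-tS}P_N I_T(u_1,u_2)(t)=\int_0^t e^{-sS}\mathds{1}_{[0,T)}\partial_x P_N(u_1u_2)(s)\,ds$ lies in $V^1_-$, is absolutely continuous on compact intervals, and has derivative $e^{-tS}\mathds{1}_{[0,T)}\partial_x P_N(u_1u_2)$. Hence, by Theorem~\ref{thm:duality} and Remark~\ref{rem:sub} together with Proposition~\ref{prop:c1} (and Proposition~\ref{prop:b0} to pass to a right-continuous, $(-\infty)$-vanishing representative of the test function, so that it lands in $V^2_{-,rc,S}$ rather than merely $V^2_S$),
\[
\|P_N I_T(u_1,u_2)\|_{U^2_S}=\sup_{\|w\|_{V^2_S}\le1}\Bigl|\int_0^T\!\!\int_{\R^2}\partial_x P_N(u_1u_2)\,\overline{w}\;dx\,dy\,dt\Bigr|.
\]
Moving $\partial_x P_N$ onto the test function (its symbol $i\xi\psi_N(\xi)$ is a rescaled Schwartz convolution kernel in $x$ of $L^1$-mass $O(N)$, hence bounded on $V^2_S$ as in the proof of Corollary~\ref{cor:mod_est}), taking the $\ell^2$-sum over $N$ weighted by $N^{-1}$, and collecting the resulting dyadic test functions into a single $v=\sum_N v_N$ with $\sum_N N\|v_N\|_{V^2_S}^2\ls1$, we arrive at
\[
\|I_T(u_1,u_2)\|_{\dot Z^{-\frac12}}\ls\sup\Bigl\{\Bigl|\int_0^T\!\!\int_{\R^2}u_1u_2\,\partial_x v\;dx\,dy\,dt\Bigr|\;:\;\|v\|_{\dot Y^{\frac12}}\le1\Bigr\},
\]
where $\|v\|_{\dot Y^{\frac12}}:=\bigl(\sum_N N\|P_N v\|_{V^2_S}^2\bigr)^{1/2}$.

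It then remains to prove $\bigl|\int_0^T\int u_1u_2\,\partial_x v\bigr|\ls\|u_1\|_{\dot Y^{-\frac12}}\|u_2\|_{\dot Y^{-\frac12}}\|v\|_{\dot Y^{\frac12}}$. Since $\|\partial_x P_N v\|_{V^2_S}\ls N\|P_N v\|_{V^2_S}$, it suffices to establish the symmetric trilinear bound $\bigl|\int_0^T\int u_1u_2u_3\bigr|\ls\prod_{j=1}^3\|u_j\|_{\dot Y^{-\frac12}}$ and then set $u_3=\partial_x v$, using $\|\partial_x v\|_{\dot Y^{-\frac12}}\ls\|v\|_{\dot Y^{\frac12}}$. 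Decompose $u_j=\sum_{N_j}P_{N_j}u_j$ with $P_{N_j}u_j\in U^2_S\subset V^2_{-,rc,S}\subset V^2_{-,S}$; the spatial integral vanishes unless $\xi_1+\xi_2+\xi_3=0$ on the supports, so the two largest among $N_1,N_2,N_3$ are comparable, and we split according to the smallest one. If $N_1$ is smallest (then $N_2\sim N_3$), apply \eqref{eq:bil_est_dyadic_a} for each dyadic $N_2$, bound the $N_1$-sum on its right-hand side by $\|u_1\|_{\dot Y^{-\frac12}}$, and sum in $N_2$ (with $N_3\sim N_2$, finitely many terms) by Cauchy--Schwarz to produce $\|u_2\|_{\dot Y^{-\frac12}}\|u_3\|_{\dot Y^{-\frac12}}$; the case where $N_2$ is smallest is identical after relabeling. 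If $N_3$ is smallest (then necessarily $N_1\sim N_2$), write each term as $\|u_{3,N_3}\|_{V^2_S}\cdot\int u_{1,N_1}u_{2,N_2}\bigl(u_{3,N_3}/\|u_{3,N_3}\|_{V^2_S}\bigr)$, apply Cauchy--Schwarz in $N_3$ followed by \eqref{eq:bil_est_dyadic_b}, and finally sum in $N_1\sim N_2$ by Cauchy--Schwarz. This yields the trilinear bound, and hence \eqref{eq:bil_est_hom1}.

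Finally, the same argument in fact bounds $\|I_T(u_1,u_2)\|_{\dot Z^{-\frac12}}$ by $\prod_j\|u_j\|_{\dot Y^{-\frac12}}$ for the functions $u_1,u_2$ in the class defining $\dot Y^{-\frac12}$; since $\dot Z^{-\frac12}\subset\dot Y^{-\frac12}$ continuously (from $U^2\subset V^2$), bilinearity and density then give a unique bounded bilinear extension $I_T:\dot Y^{-\frac12}\times\dot Y^{-\frac12}\to\dot Z^{-\frac12}$.

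The genuinely dispersive content --- the resonance identity \eqref{eq:res}, the $L^4$ Strichartz estimate \eqref{eq:strichartz} and the bilinear Strichartz estimate \eqref{eq:bil_strichartz2}, and the high/low modulation decomposition --- is already packaged inside Proposition~\ref{prop:bil_est_dyadic}, so the work left for this theorem is soft. The step I expect to require the most care is the duality/energy reduction of the first paragraph: one must track the time truncation $\mathds{1}_{[0,T)}$, commute the frequency projections past $e^{\cdot S}$, and --- crucially --- arrange, via Propositions~\ref{prop:c1} and~\ref{prop:b0}, that the test function extracted from $(U^2_S)^\ast=V^2_S$ actually lies in $V^2_{-,rc,S}$, since Proposition~\ref{prop:bil_est_dyadic} is formulated only for $V^2_{-,S}$. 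Once that is set up correctly, the dyadic case analysis and the Cauchy--Schwarz summations of the second paragraph are routine bookkeeping.
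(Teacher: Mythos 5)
Your proposal is correct and takes essentially the same route as the paper: dualize the $U^2_S$ norm via Theorem~\ref{thm:duality} and Proposition~\ref{prop:c1}, reduce to a trilinear form on $[0,T]$, split the frequency triple $(N_1,N_2,N_3)$ by the smallest index using the convolution constraint, and apply \eqref{eq:bil_est_dyadic_a} and \eqref{eq:bil_est_dyadic_b} followed by Cauchy--Schwarz. The only genuine organizational difference is where the $\ell^2$-sum over the output frequency $N$ is handled. The paper estimates each block $P_N I_T(u_{1,N_1},u_{2,N_2})$ separately, invoking duality frequency by frequency, and performs the $\ell^2$-summation over $N$ at the very end; this avoids the ``collect all dyadic test functions into a single $v$'' step. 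You instead pass to the symmetric trilinear bound $\bigl|\int_0^T\!\int u_1u_2u_3\bigr|\ls\prod\|u_j\|_{\dot Y^{-1/2}}$ with $u_3=\partial_x v$ and $\|v\|_{\dot Y^{1/2}}\le 1$; this is equivalent, but it requires the intermediate reduction you sketch somewhat briefly (pick, for each $N$, a near-optimal $V^2_S$ test function $v_N$ and an $\ell^2$-sequence, absorb phases, set $v=\sum_N P_N\tilde v_N$, and check $\|v\|_{\dot Y^{1/2}}\ls1$ using the finite overlap of the annuli). This is a standard and correct manoeuvre, but the paper's per-frequency formulation is a notch cleaner. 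On the $V^2_{-,S}$ versus $V^2_S$ point you raise: in fact the truncation $\mathds{1}_{[0,T)}$ performed inside the proof of Proposition~\ref{prop:bil_est_dyadic} already forces the limit at $-\infty$ to vanish, so the hypothesis there is not actually restrictive once one uses $\|\mathds{1}_{[0,T)}f\|_{V^2_S}\le 2\|f\|_{V^2_S}$; your suggested route via Propositions~\ref{prop:c1} and~\ref{prop:b0} also works but is slightly more than is needed.
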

\begin{proof}
  Let $u_{1,N_1}:=P_{N_1} u_1$, $u_{2,N_2}:=P_{N_2} u_2$.  By
  symmetry, it is enough to consider the two terms
  \begin{align*}
    J_1 :=& \left\|\sum_{N_2} \sum_{N_1\ll N_2}
      I_T(u_{1,N_1},u_{2,N_2})\right\|_{\dot Z^{-\frac{1}{2}}}\\
    J_2:=&\left\|\sum_{N_2} \sum_{N_1\sim N_2}
      I_T(u_{1,N_1},u_{2,N_2})\right\|_{\dot Z^{-\frac{1}{2}}}\\
  \end{align*}
  We start with $J_1$ and fix $N$.  We may assume $N\sim N_2$ and by
  Theorem~\ref{thm:duality} and Proposition~\ref{prop:c1}
  \begin{align*}
    &
    \left\|e^{-\cdot S}P_N\sum_{N_1\ll N_2} I_T(u_{1,N_1},u_{2,N_2}) \right\|_{U^2} \\
    =& \sup_{\|v\|_{V^2}=1} \left| \sum_{N_1\ll N_2} B\left(e^{-\cdot
          S}P_N I_T(u_{1,N_1},u_{2,N_2}),v\right)
    \right|\\
    = & \sup_{\|v\|_{V^2_S}=1} \left|\sum_{N_1\ll
        N_2}\int_0^T\int_{\R^2} u_{1,N_1} u_{2,N_2} \partial_x P_N v
      dxdy dt \right|.
  \end{align*}
  We apply \eqref{eq:bil_est_dyadic_a} and obtain
  \begin{equation*}
    N^{-\frac{1}{2}}\left\|\sum_{N_1\ll N_2} P_N I_T( u_{1,N_1},u_{2,N_2})
    \right\|_{U^2_S}
    \ls
    \left(\sum_{N_1} N_1^{-1}\|u_{1,N_1}\|^2_{V_S^2}\right)^{\frac12}
    N_2^{-\frac{1}{2}}\|u_{2,N_2}\|_{V_S^2}.
  \end{equation*}
  We easily sum up the squares with respect to $N_2\sim N$.

  Next, we turn to $J_2$ and fix $N_2$.  We may assume $N\ls N_2$ and
  by Theorem~\ref{thm:duality} and Proposition~\ref{prop:c1}
  \begin{align*}
    &\sum_{N\ls N_2} N^{-1}
    \left\|e^{-\cdot S}P_N I_T(u_{1,N_1},u_{2,N_2}) \right\|^2_{U^2} \\
    =& \sum_{N\ls N_2} N^{-1}\sup_{\|v\|_{V^2}=1} \left|
      B\left(e^{-\cdot S}P_N I_T(u_{1,N_1},u_{2,N_2}),v\right)
    \right|^2\\
    = & \sum_{N\ls N_2} N^{-1} \sup_{\|v\|_{V^2_S}=1}
    \left|\int_0^T\int_{\R^2} u_{1,N_1} u_{2,N_2} P_N \partial_x v
      dxdy dt \right|^2.
  \end{align*}
  We apply \eqref{eq:bil_est_dyadic_b} and obtain
  \begin{align*}
    &\left\| \sum_{N_2}\sum_{N_1\sim N_2} I_T(u_{1,N_1},u_{2,N_2})
    \right\|_{\dot Z^{-\frac12}}
    \ls \sum_{N_2}\sum_{N_1\sim N_2} \left\| I_T(u_{1,N_1},u_{2,N_2})
    \right\|_{\dot Z^{-\frac12}}\\
    \ls &\sum_{N_2}\sum_{N_1\sim N_2} (N_1N_2)^{-\frac12} \|u_{1,N_1}\|_{V^2_S}\|u_{2,N_2}\|_{V^2_S}
  \end{align*}
   and the proof is complete.
\end{proof}

\begin{corollary}\label{cor:bil_est_hom} 
  There exists $C>0$, such that for all $0<T<\infty$ and for all
  $u_1,u_2\in \dot Z^{-\frac{1}{2}}\cap C(\R;H^{1,1}(\R^2))$ it holds
  \begin{equation}\label{eq:bil_est_hom}
    \left\|I_T(u_1,u_2)\right\|_{\dot Z^{-\frac{1}{2}}}
    \leq C \prod_{j=1}^2\|u_j\|_{\dot Z^{-\frac{1}{2}}} \;,
  \end{equation}
  and $I_T$ continuously extends to a bilinear operator
  \begin{equation*}
    I_T:\dot Z^{-\frac{1}{2}}\times \dot Z^{-\frac{1}{2}}\to \dot Z^{-\frac{1}{2}}.
  \end{equation*}
  A similar statement holds true with $\dot Z^s$ replaced by $\dot
  Y^s$.
\end{corollary}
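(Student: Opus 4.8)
The plan is to obtain Corollary~\ref{cor:bil_est_hom} as a direct consequence of Theorem~\ref{thm:bil_est_hom1} together with the continuous embedding $\dot Z^s\subset\dot Y^s$ for $s\leq 0$. First I would record that embedding. By Proposition~\ref{prop:v}, Part~\ref{it:v_emb1}, the inclusion $U^2\subset V^2_{-,rc}\subset V^2$ is continuous; conjugating with the unitary group $e^{\cdot S}$ and using Definition~\ref{def:S_spaces} gives $\|w\|_{V^2_S}\ls\|w\|_{U^2_S}$ for all $w\in U^2_S$. Applying this to each dyadic block $P_N u$ and summing the pointwise-in-$N$ inequality $N^{2s}\|P_N u\|_{V^2_S}^2\ls N^{2s}\|P_N u\|_{U^2_S}^2$ over dyadic $N$ yields $\|u\|_{\dot Y^s}\ls\|u\|_{\dot Z^s}$. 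Combined with Proposition~\ref{prop:u}, Part~\ref{it:u_right_cont} (so that an element of $U^2_S$ lies in $V^2_{-,rc,S}$), this shows that the defining subclass of $\dot Z^s$ in Definition~\ref{def:res_space} is contained, with dominated norm, in that of $\dot Y^s$, hence $\dot Z^s\subset\dot Y^s$ continuously.

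Next, for $u_1,u_2\in\dot Z^{-\frac12}\cap C(\R;H^{1,1}(\R^2))$ I would simply chain Theorem~\ref{thm:bil_est_hom1} with this embedding:
\[
\left\|I_T(u_1,u_2)\right\|_{\dot Z^{-\frac12}}\leq C\prod_{j=1}^2\|u_j\|_{\dot Y^{-\frac12}}\ls \prod_{j=1}^2\|u_j\|_{\dot Z^{-\frac12}},
\]
which is \eqref{eq:bil_est_hom}. Since $\dot Z^{-\frac12}$ is by definition the completion of $\dot Z^{-\frac12}\cap C(\R;H^{1,1}(\R^2))$ (a linear subspace) with respect to $\|\cdot\|_{\dot Z^{-\frac12}}$, and $I_T$ is bilinear and, by the displayed bound, bounded on this dense subclass, it extends uniquely and continuously to a bounded bilinear operator $I_T:\dot Z^{-\frac12}\times\dot Z^{-\frac12}\to\dot Z^{-\frac12}$ with the same estimate; bilinearity is preserved by the usual telescoping/limit argument.

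For the variant with $\dot Z^s$ replaced by $\dot Y^s$, I would note that Theorem~\ref{thm:bil_est_hom1} already produces an extension $I_T:\dot Y^{-\frac12}\times\dot Y^{-\frac12}\to\dot Z^{-\frac12}$; composing with the embedding $\dot Z^{-\frac12}\subset\dot Y^{-\frac12}$ gives $I_T:\dot Y^{-\frac12}\times\dot Y^{-\frac12}\to\dot Y^{-\frac12}$ together with
\[
\left\|I_T(u_1,u_2)\right\|_{\dot Y^{-\frac12}}\ls\left\|I_T(u_1,u_2)\right\|_{\dot Z^{-\frac12}}\leq C\prod_{j=1}^2\|u_j\|_{\dot Y^{-\frac12}}.
\]

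Since every ingredient is already available, there is essentially no real obstacle here; the corollary is a formal consequence of the (hard) Theorem~\ref{thm:bil_est_hom1}. The only points that need a moment of care are that the $e^{\cdot S}$-conjugation in Definition~\ref{def:S_spaces} transports the embedding $U^2\subset V^2$ verbatim to $U^2_S\subset V^2_S$, and that the completion procedures defining $\dot Z^{-\frac12}$ and $\dot Y^{-\frac12}$ are compatible, i.e.\ that a $\|\cdot\|_{\dot Z^{-\frac12}}$-Cauchy sequence from the $\dot Z$-defining class is also $\|\cdot\|_{\dot Y^{-\frac12}}$-Cauchy and converges to the same limit in $C(\R;\dot H^{-\frac12,0}(\R^2))$ — both of which follow from the norm domination established in the first paragraph.
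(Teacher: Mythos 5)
Your proposal is correct and follows essentially the same route as the paper, which likewise deduces the corollary from Theorem~\ref{thm:bil_est_hom1} together with the continuous embedding $\dot Z^s\subset\dot Y^s$ (itself a consequence of $U^2_S\subset V^2_{-,rc,S}$). You merely spell out the embedding, the density/completion step, and the $\dot Y$ variant in more detail than the paper's one-line proof.
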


\begin{proof}
  This is due to the continuous embedding $\dot Z^s\subset \dot Y^s$
  and Theorem~\ref{thm:bil_est_hom1}.
\end{proof}

\begin{corollary}\label{cor:scatter_limit}
  Assume that $u_1,u_2 \in \dot Y^{-\frac{1}{2}}$. Then,
  $I_\infty(u_1,u_2) \in \dot Z^{-\frac{1}{2}}$ and
  \begin{equation*}
    \|I_T(u_1,u_2)-I_\infty(u_1,u_2)\|_{\dot Z^{-\frac{1}{2}}} \to 0 \quad (T\to \infty)
  \end{equation*}
  In particular, for any $u \in \dot Y^{-\frac{1}{2}}$ it exists
  \begin{equation}\label{eq:scatter_limit}
    \lim_{t\to \infty} e^{-tS} I_\infty(u,u)(t) \in \dot H^{-\frac12,0}(\R^2).
  \end{equation}
\end{corollary}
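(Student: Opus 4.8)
The plan is to establish membership $I_\infty(u_1,u_2)\in\dot Z^{-\frac12}$ first, and then upgrade the uniform bilinear bound to genuine convergence by exhibiting an explicit formula for the tail $I_\infty-I_T$. For the membership, note that the Duhamel operator \eqref{eq:duhamel} is defined for $T\in(0,\infty]$ and that the estimates of Proposition~\ref{prop:bil_est_dyadic}, hence of Theorem~\ref{thm:bil_est_hom1}, are valid without change at $T=\infty$ (the parameter $T$ enters only through the trivial bound $\|\mathds{1}_{[0,T)}f\|_{V^2_S}\le 2\|f\|_{V^2_S}$, which also holds for $\mathds{1}_{[0,\infty)}$ since $V^2$ functions vanish at $+\infty$). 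This gives $I_\infty(u_1,u_2)\in\dot Z^{-\frac12}$ with $\|I_\infty(u_1,u_2)\|_{\dot Z^{-\frac12}}\ls\|u_1\|_{\dot Y^{-\frac12}}\|u_2\|_{\dot Y^{-\frac12}}$, and the same estimate (applied to $I_\infty$ and $I_T$) bounds $I_\infty-I_T$ uniformly in $T$.

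Next I would reduce, by bilinearity, the density built into the definition of $\dot Y^{-\frac12}$, and the uniform bound just mentioned, to the case $u_1,u_2\in C(\R;H^{1,1}(\R^2))\cap\dot Y^{-\frac12}$, where \eqref{eq:duhamel} is a genuine integral. A direct computation then shows $I_T(u_1,u_2)(t)=e^{(t-T)S}I_\infty(u_1,u_2)(T)$ for $t\ge T$ while $I_T(u_1,u_2)(t)=I_\infty(u_1,u_2)(t)$ for $t<T$; hence, writing $h:=e^{-\cdot S}I_\infty(u_1,u_2)$ and using that $P_N$ commutes with $e^{-\cdot S}$ and with multiplication by $\mathds{1}_{[T,\infty)}$,
\begin{equation*}
  \bigl\|I_\infty(u_1,u_2)-I_T(u_1,u_2)\bigr\|_{\dot Z^{-\frac12}}^2
  =\sum_{N}N^{-1}\bigl\|\mathds{1}_{[T,\infty)}\bigl(P_N h-P_N h(T)\bigr)\bigr\|_{U^2}^2 .
\end{equation*}

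The heart of the matter is then the elementary claim that for each $a\in U^2$ one has $\|\mathds{1}_{[T,\infty)}(a-a(T))\|_{U^2}\le 2\|a\|_{U^2}$ for all $T$, and $\|\mathds{1}_{[T,\infty)}(a-a(T))\|_{U^2}\to 0$ as $T\to\infty$. I would prove this by observing that left-truncation of a $U^2$-atom by $\mathds{1}_{[T,\infty)}$ is again, up to a factor in $[0,1]$, a $U^2$-atom, whence $\|\mathds{1}_{[T,\infty)}b\|_{U^2}\le\|b\|_{U^2}$ for every $b\in U^2$; combined with $\|\mathds{1}_{[T,\infty)}c\|_{U^2}=\|c\|_{L^2}$ for constant $c$ and the embedding $U^2\subset L^\infty(\R;L^2)$ of Proposition~\ref{prop:u}, this yields the uniform bound, and writing $a=b+r$ with $b$ a finite sum of atoms (hence constant for large $t$) and $\|r\|_{U^2}<\eps$ yields the limit. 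Applying this to $a=P_N h\in U^2$ (legitimate since $P_N I_\infty(u_1,u_2)\in U^2_S$), each summand in the display above tends to $0$ as $T\to\infty$ and is dominated by $4N^{-1}\|P_N h\|_{U^2}^2$, which is summable with sum $4\|I_\infty(u_1,u_2)\|_{\dot Z^{-\frac12}}^2$; dominated convergence for series then gives $\|I_\infty(u_1,u_2)-I_T(u_1,u_2)\|_{\dot Z^{-\frac12}}\to 0$, and the general case of $u_1,u_2\in\dot Y^{-\frac12}$ follows by density.

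For the concluding ``in particular'' statement I would use $I_\infty(u,u)\in\dot Z^{-\frac12}\subset C(\R;\dot H^{-\frac12,0}(\R^2))$: with $g:=e^{-\cdot S}I_\infty(u,u)$, each $P_N g\in U^2$ possesses a limit $P_N g(\infty)\in L^2$ as $t\to\infty$ by Proposition~\ref{prop:u}, and $\sum_N N^{-1}\|P_N g(\infty)\|_{L^2}^2\le\sum_N N^{-1}\|P_N g\|_{U^2}^2<\infty$, so these pieces define an element $w\in\dot H^{-\frac12,0}(\R^2)$; one more application of dominated convergence to $\sum_N N^{-1}\|P_N g(t)-P_N g(\infty)\|_{L^2}^2$ shows $e^{-tS}I_\infty(u,u)(t)\to w$ in $\dot H^{-\frac12,0}(\R^2)$. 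The main obstacle I anticipate is precisely the convergence $I_T\to I_\infty$: the uniform bilinear estimate alone gives only boundedness of the tail, and the whole argument hinges on spotting the identity $e^{-\cdot S}(I_\infty-I_T)(u_1,u_2)=\mathds{1}_{[T,\infty)}(h-h(T))$, which converts the problem into the fact that $U^2$ functions are eventually nearly constant.
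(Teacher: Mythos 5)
Your plan hinges on first establishing $I_\infty(u_1,u_2)\in\dot Z^{-\frac12}$ by asserting that Theorem~\ref{thm:bil_est_hom1} ``is valid without change at $T=\infty$''. This step has a genuine gap. The proof of Theorem~\ref{thm:bil_est_hom1} writes
\begin{equation*}
  \bigl\|e^{-\cdot S}P_N I_T(u_{1,N_1},u_{2,N_2})\bigr\|_{U^2}
  =\sup_{\|v\|_{V^2}=1}\bigl|B\bigl(e^{-\cdot S}P_N I_T(u_{1,N_1},u_{2,N_2}),v\bigr)\bigr|
\end{equation*}
via Theorem~\ref{thm:duality} and Proposition~\ref{prop:c1}, and this identity presupposes that $e^{-\cdot S}P_N I_T(u_{1,N_1},u_{2,N_2})$ is already a $U^2$ function: the bilinear form $B$ is defined only on $U^p\times V^{p'}$, and $V^{p'}=(U^p)^\ast$ is a \emph{dual}, not a predual, so an a priori bound on a pairing against all $v\in V^2$ does not by itself place the first argument in the non-reflexive space $U^2$. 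For finite $T$ and $u_1,u_2\in C(\R;H^{1,1})$ the hypothesis is satisfied because $e^{-\cdot S}P_N I_T$ is absolutely continuous with derivative $\mathds{1}_{[0,T)}e^{-\cdot S}P_N\partial_x(u_1u_2)\in L^1(\R;L^2)$, hence in $V^1_-\subset U^2$. For $T=\infty$ the derivative is merely locally integrable; membership in $U^2$ is precisely what is at stake and cannot be assumed. (The parenthetical remark about $\|\mathds{1}_{[0,\infty)}f\|_{V^2_S}\leq 2\|f\|_{V^2_S}$ is correct, but that is not where the dependence on $T<\infty$ enters.) Consequently the later step ``$a=P_Nh\in U^2$ (legitimate since $P_NI_\infty(u_1,u_2)\in U^2_S$)'' is unsupported, and the argument as a whole is circular.

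The paper handles this differently: it first bounds $\sum_N N^{-1}\|e^{-\cdot S}P_N I_\infty\|_{V^2_0}^2$ using only finite partitions (which agree with those of $I_T$ for large $T$), invokes Proposition~\ref{prop:v}, Part~\ref{it:v_limits} to obtain the limit at $+\infty$, and thereby gets $I_\infty\in\dot Y^{-\frac12}$ --- a $V^2$-type, not a $U^2$-type, conclusion, which already yields \eqref{eq:scatter_limit}. It then proves that $(I_T)_T$ is \emph{Cauchy} in $\dot Z^{-\frac12}$ by splitting $u_i=\alpha_0 e^{\cdot S}\phi_i+\widetilde u_i$ with $\phi_i=\lim_{t\to\infty}e^{-tS}u_i(t)$ and exploiting the smallness of $\|\alpha_T\widetilde u_i\|_{\dot Y^{-\frac12}}$ for large $T$; completeness of $\dot Z^{-\frac12}$ then delivers both $I_\infty\in\dot Z^{-\frac12}$ and the convergence simultaneously. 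Your key identity $e^{-\cdot S}\bigl(I_\infty-I_T\bigr)(u_1,u_2)=\mathds{1}_{[T,\infty)}\bigl(h-h(T)\bigr)$ and the truncation lemma $\|\mathds{1}_{[T,\infty)}(a-a(T))\|_{U^2}\to0$ for $a\in U^2$ are both correct and would give a cleaner proof of the convergence \emph{once} $I_\infty\in\dot Z^{-\frac12}$ is known, but they cannot be used to establish that membership; as it stands the argument would need to be reorganized along the lines of the paper, deriving the Cauchy property first and identifying the limit afterwards.
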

\begin{proof}
  Without loss of generality we may assume $u_1,u_2 \in
  C(\R;H^{1,1}(\R^2))$ such that $\|u_1\|_{\dot
    Y^{-\frac12}}=\|u_2\|_{\dot Y^{-\frac12}}=1$.  Estimate
  \eqref{eq:bil_est_hom1} implies
  \begin{equation*}
    \sum_N N^{-1}\|e^{-\cdot S} P_N I_\infty(u_1,u_2)\|_{V^2_0}^2 \leq C,
  \end{equation*}
  and due to Proposition~\ref{prop:v}, Part~\ref{it:v_limits}, for all
  the dyadic pieces the limits at $\infty$ exist and we have $P_N
  I_\infty(u_1,u_2) \in V^2_{-,rc,S}$ along with
  \begin{equation*}
    \sum_N N^{-1} \|P_N I_\infty(u_1,u_2)\|_{V^2_S}^2 \leq C,
  \end{equation*}
  which yields $I_\infty(u_1,u_2)\in \dot Y^{-\frac12}$ and in
  particular the convergence \eqref{eq:scatter_limit}.

  The limits $e^{-tS}u_i(t)\to \phi_i \in \dot H^{-\frac12,0}(\R^2)$
  for $t \to \infty$ exist.  Let $\alpha_T:\R\to \R$ be
  \begin{equation} \label{eq:alpha} \alpha_T(t) = \left\{
      \begin{array}{ll}
        0 & (t<T-1)\\
        t+1-T & (T-1\leq t<T)\\
        1 & (t\geq T)
      \end{array}
    \right. 
  \end{equation}
  We define $\widetilde{u}_i=u_i-\alpha_0 e^{\cdot S}\phi_i$, $i=1,2$.
  Let $\eps>0$.  There exists $T>0$, such that $\|\alpha_T
  \widetilde{u}_i\|_{\dot Y^{-\frac12}}<\eps$, which follows by a
  similar argument as in the proof of Proposition~\ref{prop:cont_no},
  Part~\ref{it:cont_no1}. Let $T_2>T_1>T$. Then,
  \begin{equation*}
    I_{T_1}(\widetilde{u}_1,u_2)-I_{T_2}(\widetilde{u}_1,u_2)=
    I_{T_1}(\alpha_{T_1}\widetilde{u}_1,u_2)-I_{T_2}(\alpha_{T_1}\widetilde{u}_1,u_2)
  \end{equation*}
  and for $i=1,2$
  \begin{equation*}
    \|I_{T_i}(\alpha_{T_1}\widetilde{u}_1,u_2)\|_{\dot{Z}^{-\frac12}}\ls \eps.
  \end{equation*}
  By a similar argument,
  \begin{equation*}
    \|I_{T_1}(\alpha_0 e^{\cdot S}\phi_1,\tilde{u}_2)
    - I_{T_2}(\alpha_0 e^{\cdot S}\phi_1,\tilde{u}_2)\|_{\dot Z^{-\frac12}} \ls \eps.
  \end{equation*}
  On the other hand, by the $L^4$ Strichartz estimate
  \eqref{eq:strichartz_v} there exists $T'>0$ such that $\|\alpha_{T'}
  e^{\cdot S}P_N \phi\|_{L^4(\R^3)}<\eps\|P_N \phi\|_{L^2}$. For
  $T_2>T_1>T'$
  \begin{align*}
    & \|I_{T_1}( \alpha_0 e^{\cdot S}\phi_1, \alpha_0 e^{\cdot
      S}\phi_2) - I_{T_2}(\alpha_0 e^{\cdot S}\phi_1,
    \alpha_0 e^{\cdot S}\phi_2)\|_{\dot Z^{-\frac12}}\\
    = &\|I_{T_1}( \alpha_{T_1} e^{\cdot S}\phi_1, \alpha_{T_1}
    e^{\cdot S}\phi_2) - I_{T_2}(\alpha_{T_1} e^{\cdot S}\phi_1,
    \alpha_{T_1} e^{\cdot S}\phi_2)\|_{\dot Z^{-\frac12}} \ls \eps
  \end{align*}
  by the same proof as of Theorem~\ref{thm:bil_est_hom1}, using again
  Proposition~\ref{prop:bil_est_dyadic} where now the factor $\eps$
  comes from \eqref{eq:cauchy_a} and \eqref{eq:cauchy_b}. Hence, the
  family $(I_{T}(u_1,u_2))_T$ satisfies a Cauchy condition in $\dot
  Z^{-\frac12}$, which is a complete space. Therefore, it converges in
  $\dot Z^{-\frac12}$ to $I_\infty(u_1,u_2)$.
\end{proof}
\subsection{The inhomogeneous case}
\begin{prop}\label{prop:bil_est_dyadic2} Let $\eps>0$. There exists
  $C>0$ such that for all $0<T\leq 1$ and $u_{N_1}\in X$, $v_{N_2}\in
  U^2_S$, $w_{N_3}\in V^2_{-,S}$ with $\supp \widehat{u_{N_1}}\subset
  A_{N_1}$, $\supp \widehat{v_{N_2}}\subset A_{N_2}$, $\supp
  \widehat{w_{N_2}}\subset A_{N_3}$ for dyadic numbers $N_1,N_2,N_3$
  where $N_1\leq 1\leq N_2$ it holds
  \begin{equation}\label{eq:bil_est_dyadic2}
    \left|\int_0^T \int_{\R^2} u_{N_1} v_{N_2} w_{N_3} dxdy dt \right|
    \leq \frac{C(TN_1)^{\frac14-\eps}}{(N_2 N_3)^{\frac{1}{2}}}
    \|u_{N_1}\|_{X}\|v_{N_2}\|_{U_S^2}\|w_{N_3}\|_{V^2_S}\,.
  \end{equation}
\end{prop}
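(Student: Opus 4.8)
The plan is to run the argument of Proposition~\ref{prop:bil_est_dyadic} in the low-frequency regime $N_1\le 1$, replacing the scale-invariant weights by the $X$-norm and extracting the factor $(TN_1)^{\frac14-\eps}$ from the shortness of the time interval. Two preliminary reductions: by the Fourier support condition the integral vanishes unless $N_2\sim N_3$ (hence $N_2\sim N_3\gtrsim1$), and writing $\int_0^T=\int_\R\mathds1_{[0,T)}$ and moving the cutoff onto whichever factor is not being split in modulation, multiplication by $\mathds1_{[0,T)}$ costs only a constant in the $U^2_S$- and $V^2_S$-norms; in particular $u_{N_1}$ is kept untruncated so its full $X$-norm (both the $\dot Z^0$- and the $\dot X^{0,1,1}$-part) stays available.

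\textbf{Modulation split.} Set $M:=\tfrac18N_1N_2N_3$ and decompose each factor by $\Id=Q^S_{<M}+Q^S_{\ge M}$. Exactly as in the derivation of \eqref{eq:res1}, the resonance identity \eqref{eq:res} gives $\max_i|\lambda_i|\ge3|\xi_1\xi_2\xi_3|\ge M$ on the support, so the term with all three factors of modulation $<M$ vanishes, and it remains to bound the contributions in which one factor has modulation $\ge M$. For such a factor I would use the following time-localized estimate: since $e^{-\cdot S}Q^S_{M'}f$ has $\tau$-frequencies in a band of width $\sim M'$, Bernstein's inequality in $t$ and Hölder's inequality give
\begin{equation*}
  \|Q^S_{M'}f\|_{L^2([0,T]\times\R^2)}\le T^{\frac14}\|Q^S_{M'}f\|_{L^4_t(\R;L^2_{x,y})}\ls T^{\frac14}(M')^{\frac14}\|Q^S_{M'}f\|_{L^2(\R^3)},
\end{equation*}
after which \eqref{eq:mod_est1} (for $f=v_{N_2}$ or $f=w_{N_3}$) or the $\dot X^{0,1,1}$-component of $\|u_{N_1}\|_X$ (for $f=u_{N_1}$) controls $\|Q^S_{M'}f\|_{L^2(\R^3)}$; summing over $M'\ge M$ yields an $L^2([0,T]\times\R^2)$-bound with the gain $T^{\frac14}M^{-\frac14}$ (respectively $T^{\frac14}M^{-\frac34}$ from $\dot X^{0,1,1}$).

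\textbf{The main case and the obstacle.} When $w_{N_3}$ (or, symmetrically, $u_{N_1}$) carries the high modulation, the other two factors are both in $U^2_S$, so \eqref{eq:bil_strichartz1} gives $\|u_{N_1}v_{N_2}\|_{L^2(\R^3)}\ls(N_1/N_2)^{\frac12}\|u_{N_1}\|_X\|v_{N_2}\|_{U^2_S}$, and Cauchy--Schwarz against $\|Q^S_{\ge M}w_{N_3}\|_{L^2([0,T]\times\R^2)}\ls T^{\frac14}M^{-\frac14}\|w_{N_3}\|_{V^2_S}$ produces exactly the factor
\begin{equation*}
  \Big(\frac{N_1}{N_2}\Big)^{\frac12}M^{-\frac14}\sim\frac{N_1^{\frac14}}{(N_2N_3)^{\frac12}},
\end{equation*}
i.e.\ the claimed bound (with $\eps=0$) after the $T^{\frac14}$; the $u_{N_1}$-high-modulation case is handled the same way using the $\dot X^{0,1,1}$-gain $T^{\frac14}M^{-\frac34}$, which gives even more room in $N_2,N_3$. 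The delicate case is when $v_{N_2}$ — the factor that only comes with a $U^2_S$-norm — carries the high modulation: then $u_{N_1}$ must be paired with $w_{N_3}\in V^2_{-,S}$, forcing the use of \eqref{eq:bil_strichartz2} with its logarithmic loss $(\ln(N_3/N_1)+1)^2$. Splitting this as $(\ln N_3+1)^2(\ln(1/N_1)+1)^2$, the $N_1$-part is absorbed into the $(TN_1)^{-\eps}$ slack, while the $N_3$-part must be killed; I expect reconciling this $\log$ with the sharp power $(N_2N_3)^{-\frac12}$ — by a finer modulation decomposition exploiting that $v$'s modulation is typically strictly larger than the resonance threshold $M$, where it is summed against a genuinely convergent factor — to be the main technical point. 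All remaining cases reduce, after the modulation split, to combinations of \eqref{eq:strichartz}/\eqref{eq:strichartz_v}, \eqref{eq:bil_strichartz1} and the Bernstein-in-time bound above.
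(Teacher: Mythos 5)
Your overall strategy -- modulation split at $M\sim N_1N_2N_3$, killing the resonant term with \eqref{eq:res}, and converting the Strichartz and bilinear Strichartz estimates into dyadic gains -- is the right one, but two of the three high-modulation sub-cases do not close as you have set them up, and the third is left unresolved. The root of the trouble is that you are trying to replace the paper's case distinction $N_1N_3^2\lessgtr T^{-1}$ by a single Bernstein-in-time interpolation bound $T^{1/4}M^{-1/4}$ (respectively $T^{1/4}M^{-3/4}$ from the $\hx{0}{1}{1}$-part). That geometric-mean bound is uniformly weaker than the paper's dichotomy, which uses the flat $T^{1/2}$ gain (with no modulation decomposition at all) when $N_1N_3^2\le T^{-1}$, and the full modulation gain $M^{-1/2}\sim(N_1N_2N_3)^{-1/2}$ when $N_1N_3^2\ge T^{-1}$; it is precisely this latter constraint that is used repeatedly at the end of each sub-case to produce the factor $(TN_1)^{1/4-\eps}$.

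Concretely, two of your cases fail. First, when $u_{N_1}$ carries the high modulation you propose to put $Q^S_{\ge M}u_{N_1}$ into $L^2([0,T]\times\R^2)$ and pair $v_{N_2}$ with $w_{N_3}$. But $v_{N_2}$ and $w_{N_3}$ have comparable $x$-frequencies, so there is no bilinear gain at all, and with $M\sim N_1N_2N_3$ your proposed bound $T^{1/4}M^{-3/4}$ yields $T^{1/4}(N_1N_2N_3)^{-3/4}$, which requires $N_1(N_2N_3)^{1/4}\gtrsim1$ to beat the target $(TN_1)^{1/4-\eps}(N_2N_3)^{-1/2}$ -- false for small $N_1$ and $N_2\sim N_3\sim 1$. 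Your remark that this case ``gives even more room'' is thus incorrect. The paper instead keeps the bilinear Strichartz estimate \eqref{eq:bil_strichartz1} applied to the pair $(u_{N_1},v_{N_2})$, which retains the transversality gain $(N_1/N_2)^{1/2}$, and uses $\|Q^S_{\ge M}u_{N_1}\|_{U^2_S}\ls M^{-1/2}\|u_{N_1}\|_{\hx{0}{1}{1}}$ (Proposition~\ref{prop:besov_emb}) together with $\|\widetilde w_{N_3}\|_{L^2}\le T^{1/2}\|w_{N_3}\|_{V^2_S}$; the constraint $N_1N_3^2\ge T^{-1}$ then closes the estimate. Second, you leave the case of high modulation on $v_{N_2}$ open, flagging the logarithmic loss of \eqref{eq:bil_strichartz2} as an unresolved obstacle. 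In fact the paper's absorption $(\ln(N_3/N_1)+1)^2\ls_\eps(N_3/N_1)^{\eps/2}$ already suffices: it degrades $(N_1/N_3)^{1/2}$ to $(N_1/N_3)^{(1-\eps)/2}$, and together with $\|Q^S_{\geq M}\widetilde v_{N_2}\|_{L^2}\ls M^{-1/2}\|v_{N_2}\|_{V^2_S}$ and again the constraint $N_3\ge(TN_1)^{-1/2}$ this gives exactly $(TN_1)^{1/4-\eps}(N_2N_3)^{-1/2}$ -- no finer modulation decomposition is needed. Finally, a small point: you cannot in general reduce to $N_2\sim N_3$; when $N_1\sim N_2\sim1$ one may have $N_3\ll1$. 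The paper only uses the correct weaker assumption $N_3\ls N_2$, which is all that the convolution constraint gives.

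To repair the proposal, replace the uniform Bernstein trick by the dichotomy $N_1N_3^2\lessgtr T^{-1}$ (the low case follows directly from \eqref{eq:bil_strichartz1} and $\|\widetilde w_{N_3}\|_{L^2}\le T^{1/2}\|w_{N_3}\|_{V^2_S}$), and in the high case always apply the bilinear Strichartz estimate to a pair containing the low-frequency factor $u_{N_1}$, not to $(v_{N_2},w_{N_3})$.
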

\begin{proof} We use the same notation as in the proof of
  Proposition~\ref{prop:bil_est_dyadic} and again the left hand side
  is well-defined. In particular we denote the time truncation of a
  function $u$ by $\widetilde{u}$.  Note that obviously
  \begin{equation*}
    \|\mathds{1}_{[0,T)} u\|_{U^2_S}\leq \|u\|_{U^2_S} \; ,\quad u\in U^2_{S}.
  \end{equation*}
  In any case we may assume that $N_3\ls N_2$, because otherwise the
  left hand side vanishes. In the first case we assume $N_1N_3^2\leq
  T^{-1}$.  Using the bilinear Strichartz estimate
  \eqref{eq:bil_strichartz1}, we obtain
  \begin{align*}
    &\left|\int_{\R^3} u_{N_1} \widetilde{v}_{N_2} \widetilde{w}_{N_3}
      dxdy dt \right|\\
    \ls & \|u_{N_1} \widetilde{v}_{N_2}\|_{L^2(\R^3)}
    \|\widetilde{w}_{N_3}\|_{L^2(\R^3)}\\
    \ls &
    T^{\frac12}\left(\frac{N_1}{N_2}\right)^{\frac{1}{2}}\|u_{N_1}\|_{U_S^2}
    \|v_{N_2}\|_{U_S^2} \|w_{N_3}\|_{V^2_S}.
  \end{align*}
  and the claim follows from $\|u_{N_1}\|_{U_S^2}\leq \|u_{N_1}\|_X$
  and $N_1^{\frac14} \leq T^{-\frac14} N_3^{-\frac12}$.

  Now, assume that $N_1N_3^2\geq T^{-1}$ and we fix $M=8^{-1}
  N_1N_2N_3$.  Recall from the proof of
  Proposition~\ref{prop:bil_est_dyadic} that we have
  \begin{equation*}
    \int_{\R^3} Q^S_{<M}\widetilde{u}_{N_1} Q^S_{<M}\widetilde{v}_{N_2} 
    Q^S_{<M}\widetilde{w}_{N_3} dx dy dt = 0.
  \end{equation*}
  Therefore we can always assume to have high modulation on one of the
  three factors.

  If $Q^S_2,Q^S_3\in\{Q^S_{\geq M},Q^S_{< M}\}$ and the modulation on
  the first factor is high, we apply the bilinear estimate
  \eqref{eq:bil_strichartz1} and Corollary~\ref{cor:mod_est} and
  obtain
  \begin{align*}
    &\left|\int_{\R^3} Q^S_{\geq M} u_{N_1} Q^S_2\widetilde{v}_{N_2}
      Q^S_3\widetilde{w}_{N_3}
      dxdy dt \right|\\
    \ls & \left(\frac{N_1}{N_2}\right)^{\frac{1}{2}}\|Q^S_{\geq M}
    u_{N_1}\|_{U_S^2} \|v_{N_2}\|_{U_S^2}\|Q^S_3
    \widetilde{w}_{N_3}\|_{L^2}
  \end{align*}
  Now, we combine this with $\|Q^S_3 \widetilde{w}_{N_3}\|_{L^2}\leq
  T^{\frac12}\|w_{N_3}\|_{V^2_S}$ and
  \begin{equation*}
    \|Q^S_{\geq M} u_{N_1}\|_{U_S^2}\ls \|Q^S_{\geq M} u_{N_1}\|_{\hx 0 {\frac12} 1}
    \ls (N_1N_2N_3)^{-\frac12}\|u_{N_1}\|_{\hx 0 1 1}
  \end{equation*}
  and \eqref{eq:bil_est_dyadic2} follows, because $N_2^{\frac12}\gs
  N_3^{\frac12}\geq T^{-\frac14}N_1^{-\frac14}$.

  If $Q^S_1,Q^S_3\in\{Q^S_{\geq M},Q^S_{< M}\}$ and the modulation on
  the second factor is high, an application of the interpolated
  estimate \eqref{eq:bil_strichartz2} yields
  \begin{align*}
    &\left|\int_{\R^3} Q^S_1 u_{N_1} Q^S_{\geq M}\widetilde{v}_{N_2}
      Q^S_3\widetilde{w}_{N_3}
      dxdy dt \right|\\
    \ls &
    \left(\frac{N_1}{N_3}\right)^{\frac{1-\eps}{2}}\|u_{N_1}\|_{V_S^2}
    \|w_{N_3}\|_{V^2_S}\|Q^S_{\geq M}\widetilde{v}_{N_2}\|_{L^2(\R^3)}\\
    \ls
    &\frac{1}{N_1^{\frac{\eps}{2}}N_2^{\frac12}N_3^{1-\frac{\eps}{2}}}\|u_{N_1}\|_{V_S^2}
    \|w_{N_3}\|_{V^2_S}\|v_{N_2}\|_{V^2_S}
  \end{align*}
  which shows the claim in this case, because
  $N_3^{\frac{1-\eps}{2}}\geq (TN_1)^{-\frac14+\frac{\eps}{2}}$.

  Finally, if $Q^S_1,Q^S_2\in\{Q^S_{\geq M},Q^S_{< M}\}$ and the
  modulation on the third factor is high, we invoke estimate
  \eqref{eq:bil_strichartz1} and obtain
  \begin{align*}
    &\left|\int_{\R^3} Q^S_1 u_{N_1} Q^S_2\widetilde{v}_{N_2}
      Q^S_{\geq M} \widetilde{w}_{N_3}
      dxdy dt \right|\\
    \ls & \left(\frac{N_1}{N_2}\right)^{\frac{1}{2}}\|Q^S_1
    u_{N_1}\|_{U_S^2} \|Q^S_1\widetilde{v}_{N_2}\|_{U^2_S} \|Q^S_{\geq
      M}
    \widetilde{w}_{N_3}\|_{L^2(\R^3)}\\
    \ls & \frac{1}{N_2N_3^{\frac{1}{2}}}\|u_{N_1}\|_{U_S^2}
    \|v_{N_2}\|_{U^2_S} \|w_{N_3}\|_{V^2_S}
  \end{align*}
  which completes the proof, because $N_2^{\frac12}\gs
  N_3^{\frac12}\geq (TN_1)^{-\frac14}$.
\end{proof}

\begin{theorem}\label{thm:bil_est_inhom} There exists $C>0$, such that
  for all $u_1,u_2\in Z^{-\frac{1}{2}}\cap C(\R;H^{1,1}(\R^2))$ it
  holds
  \begin{equation}\label{eq:bil_est_inhom}
    \left\|I_1(u_1,u_2)\right\|_{\dot Z^{-\frac{1}{2}}}
    \leq C \prod_{j=1}^2\|u_j\|_{Z^{-\frac{1}{2}}}\; ,
  \end{equation}
  and $I_1$ continuously extends to a bilinear operator
  \begin{equation*}
    I_1: Z^{-\frac{1}{2}}\times Z^{-\frac{1}{2}}\to \dot Z^{-\frac{1}{2}}
    \subset Z^{-\frac{1}{2}}.
  \end{equation*}
\end{theorem}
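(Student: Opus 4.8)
The plan is to exploit the defining splitting $Z^{-\frac12}=\dot Z^{-\frac12}+X$ in order to reduce the claimed bound to the homogeneous bilinear estimate of Corollary~\ref{cor:bil_est_hom} together with the dyadic estimates of Proposition~\ref{prop:bil_est_dyadic} and, decisively, Proposition~\ref{prop:bil_est_dyadic2}. First I would fix, for $j=1,2$, a decomposition $u_j=v_j+w_j$ with $v_j\in\dot Z^{-\frac12}$, $w_j\in X$ and $\|v_j\|_{\dot Z^{-\frac12}}+\|w_j\|_X\leq 2\|u_j\|_{Z^{-\frac12}}$; after a routine smoothing and frequency-truncation argument one may assume in addition $v_j,w_j\in C(\R;H^{1,1}(\R^2))$, so that all the Duhamel integrals below are classically defined. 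Bilinearity then gives
\begin{equation*}
  I_1(u_1,u_2)=I_1(v_1,v_2)+I_1(v_1,w_2)+I_1(w_1,v_2)+I_1(w_1,w_2),
\end{equation*}
the first term being bounded by $C\|v_1\|_{\dot Z^{-\frac12}}\|v_2\|_{\dot Z^{-\frac12}}$ via Corollary~\ref{cor:bil_est_hom} with $T=1$. Using the symmetry $I_1(f,g)=I_1(g,f)$, it then remains to establish, for $f\in X\cap C(\R;H^{1,1}(\R^2))$, the two mixed bilinear bounds
\begin{equation*}
  \|I_1(f,g)\|_{\dot Z^{-\frac12}}\lesssim\|f\|_X\|g\|_{\dot Z^{-\frac12}}\quad(g\in\dot Z^{-\frac12}),\qquad
  \|I_1(f,g)\|_{\dot Z^{-\frac12}}\lesssim\|f\|_X\|g\|_X\quad(g\in X),
\end{equation*}
which cover $I_1(w_j,v_k)$ and $I_1(w_1,w_2)$ respectively.

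For these mixed bounds I would run a dyadic decomposition in exact analogy with the proof of Theorem~\ref{thm:bil_est_hom1}: write $f=\sum_{N_1}P_{N_1}f$, $g=\sum_{N_2}P_{N_2}g$, express $\|e^{-\cdot S}P_NI_1(f,g)\|_{U^2}$ through the duality of Theorem~\ref{thm:duality} and Proposition~\ref{prop:c1}, and reduce to the trilinear space--time integrals $\int_0^1\!\!\int_{\R^2}P_{N_1}f\,P_{N_2}g\,\partial_xP_Nv$ with $\|v\|_{V^2_S}=1$, which vanish unless two of $N_1,N_2,N$ are comparable and dominate the third. In every interaction geometry in which the distinguished $X$-factor $f$ sits at frequency $N_1\geq 1$, one has $\|P_{N_1}f\|_{U^2_S}\leq\|f\|_{\dot Z^0}\leq\|f\|_X$ and, crucially, $\sum_{N_1\geq1}N_1^{-1}\|P_{N_1}f\|_{V^2_S}^2\lesssim\|f\|_{\dot Z^0}^2\leq\|f\|_X^2$; hence the homogeneous dyadic estimates \eqref{eq:bil_est_dyadic_a} and \eqref{eq:bil_est_dyadic_b} of Proposition~\ref{prop:bil_est_dyadic} apply verbatim, with the $\dot Y^{-\frac12}$-type sum over the low factor there replaced by the finite sum just written, and are summed in $N$ by Cauchy--Schwarz. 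The genuinely new regime is that in which $f$ sits at frequency $N_1\leq 1$: then the weight $N_1^{-1}$ carried by $\dot Z^{-\frac12}$ is no longer affordable, and instead one invokes Proposition~\ref{prop:bil_est_dyadic2} with $T=1$, whose gain $(1\cdot N_1)^{\frac14-\eps}=N_1^{\frac14-\eps}$ is, for $0<\eps<\frac14$, summable over dyadic $N_1\leq 1$; after Cauchy--Schwarz in $N_1$ the remaining factors reassemble precisely into $\|f\|_X\|g\|_{\dot Z^{-\frac12}}$, respectively into $\|f\|_X\|g\|_X$ when $g\in X$ by using $\|P_{N_2}g\|_{U^2_S}\leq\|g\|_{\dot Z^0}\leq\|g\|_X$. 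The residual configurations in which all three frequencies are $\leq 1$ fall outside the stated hypotheses of Proposition~\ref{prop:bil_est_dyadic2} but are controlled directly by the bilinear Strichartz estimate \eqref{eq:bil_strichartz1} together with the $\mathds{1}_{[0,1)}$ time truncation --- which furnishes $\|\mathds{1}_{[0,1)}\partial_xP_Nv\|_{L^2(\R^3)}\lesssim N$ --- the positive powers of the small frequencies produced that way again being dyadically summable. Once \eqref{eq:bil_est_inhom} holds on $C(\R;H^{1,1}(\R^2))$, the operator $I_1$ extends continuously to $Z^{-\frac12}\times Z^{-\frac12}$ by density and the embedding $\dot Z^{-\frac12}\subset Z^{-\frac12}$.

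The main obstacle is exactly the low-frequency interactions involving the $X$-factor at frequency $\leq 1$: there the $|\xi|^{-\frac12}$ weight built into $\dot Z^{-\frac12}$ provides no gain, and one must instead extract a strictly positive power of the small frequency $N_1$. This is precisely what Proposition~\ref{prop:bil_est_dyadic2} supplies, either through the bilinear Strichartz estimate when the resonance function \eqref{eq:res} is of moderate size, or through the extra modulation regularity encoded in the $\hx{0}{1}{1}$-part of the $X$-norm when it is large; checking that the resulting exponent $\frac14-\eps$ is genuinely positive, so that $\sum_{N_1\leq1}N_1^{\frac14-\eps}<\infty$, is what makes the whole scheme close. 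A secondary, purely bookkeeping difficulty is keeping track of which of the three slots --- $u_1$, $u_2$, or the dual function $\partial_xP_Nv$ --- carries which dyadic frequency range in each of the interaction geometries.
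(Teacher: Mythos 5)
Your proposal is correct and follows essentially the same route as the paper: decompose $u_j=v_j+w_j$ into the $\dot Z^{-\frac12}$ and $X$ parts, dispose of the $v_1v_2$ interaction and of the high-frequency parts of the $w_j$ by the homogeneous estimate using $\|P_{\geq 1}w\|_{\dot Z^{-\frac12}}\lesssim\|w\|_X$, invoke Proposition~\ref{prop:bil_est_dyadic2} precisely for the $N_1\leq1\leq N_2$ interactions, and close the all-low-frequency regime by a direct Strichartz estimate which produces a summable positive power of the output frequency. The only cosmetic difference is that the paper uses the linear $L^4$ Strichartz estimate \eqref{eq:strichartz} in the all-low case, whereas you invoke the bilinear one \eqref{eq:bil_strichartz1}; both yield the same dyadic gain.
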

\begin{proof} We decompose $u_j=v_j+w_j$, $v_j\in \dot Z^{-\frac12}$
  and $w_j\in X$, $j=1,2$.  Due to $\|P_{\geq 1} u\|_{\dot
    Z^{-\frac12}}\ls \|P_{\geq 1} u\|_{X}$ and
  Corollary~\ref{cor:bil_est_hom}, it remains to prove
  \begin{align}
    \left\|I_1(P_{< 1}w_1,v_2)\right\|_{\dot Z^{-\frac{1}{2}}}
    \leq & C \|w_1\|_{X}\|v_2\|_{\dot Z^{-\frac12}}\, ,\label{eq:inhom1}\\
    \left\|I_1(P_{< 1}w_1,P_{< 1}w_2)\right\|_{\dot Z^{-\frac{1}{2}}}
    \leq & C \|w_1\|_{X}\|w_2\|_{X}.\label{eq:inhom2}
  \end{align}
  We start with a proof of \eqref{eq:inhom2}. By
  Theorem~\ref{thm:duality} and Proposition~\ref{prop:c1},
  \begin{align}
    &N^{-\frac12}\left\|P_NI_1(P_{< 1}w_1,P_{< 1}w_2)\right\|_{U^2_S}\nonumber\\
    \ls & N^{\frac12}\|P_{< 1}w_1P_{< 1}
    w_2\|_{L^1([0,1];L^2)}\nonumber\\
    \ls& N^{\frac12} \|P_{< 1}w_1\|_{\dot Z^0}\|P_{< 1}w_2\|_{\dot
      Z^0}\label{eq:small_freq}
  \end{align}
  due to the $L^4$ estimate \eqref{eq:strichartz}. We may sum up all
  dyadic pieces for $N\ls 1$.

  Let us turn to the proof of \eqref{eq:inhom1}. The estimate for
  $I_1(P_{<1}w_1,P_{<1}v_2)$ is already covered by
  \eqref{eq:small_freq}.  Assume $N_1\leq 1\leq N_2$. By
  Theorem~\ref{thm:duality} and Proposition~\ref{prop:c1}, we obtain
  \begin{align*}&N^{-\frac12}\left\|P_NI_1(P_{N_1}w_1,P_{N_2}v_2)\right\|_{U^2_S}\\
    =&N^{-\frac12}\sup_{\|f\|_{V^2_S=1}} \left|\int_0^1 \int_{\R^2}
      P_{N_1}w_{1} P_{N_2}v_{2} \partial_x P_N f
      dxdy dt \right|\\
    \ls & N_1^{\frac14-\eps}\|P_{N_1}w_1\|_X
    N_2^{-\frac12}\|P_{N_2}v_2\|_{U^2_S}
  \end{align*}
  where we applied \eqref{eq:bil_est_dyadic2} in the last step.  Now,
  we sum up with respect to $N_1\leq 1$. Finally, we perform the
  summation of the squared dyadic pieces with respect to $N\sim N_2$.
\end{proof}

\section{Proof of the main results}\label{sect:proof_main}
\noindent
In this section we present the proofs of the main results stated in
Section~\ref{sect:intro_main}.  We follow the general approach via the
contraction mapping principle, which is well-known.

For regular functions, the Cauchy problem \eqref{eq:kpII} on the time
interval $(0,T)$ for $0<T \leq \infty$ is
equivalent to
\begin{equation}\label{eq:int_eq}
  u(t)=e^{tS}u_0-\frac12 I_{T}(u,u)(t) \ , \ t\in (0,T)
\end{equation}
This allows for a generalization to rough functions: Whenever we refer
to a solution of \eqref{eq:kpII} on $(0,T)$, the operator
equation \eqref{eq:int_eq} is assumed to be satisfied.

\begin{proof}[Proof of Theorem~\ref{thm:gwp_hom}]
  Let $\alpha_0$ be as in \eqref{eq:alpha}.  We then have $\alpha_0
  e^{\cdot S}u_0 \in \dot Z^{-\frac12}$, which implies that $e^{\cdot
    S} u_0 \in \dot Z^{-\frac12}([0,\infty))$ for $u_0\in \dot
  H^{-\frac12,0}(\R^2)$ and
  \begin{equation*}
    \|e^{\cdot S}u_0\|_{\dot Z^{-\frac12}([0,\infty))}\leq \|u_0\|_{\dot H^{-\frac12,0}}.
  \end{equation*}
  Let
  \begin{equation*}
    \dot B_\delta :=\{u_0 \in \dot H^{-\frac12,0}(\R^2) \mid \|u_0\|_{\dot
      H^{-\frac12,0}}<\delta\}
  \end{equation*}
  for $\delta=(4C+4)^{-2}$, with the constant $C>0$ from
  \eqref{eq:bil_est_hom}.  Define
  \begin{equation*}
    D_r:=\{u \in \dot Z^{-\frac12}([0,\infty)) \mid \|u\|_{\dot Z^{-\frac12}([0,\infty))}\leq r\},
  \end{equation*}
  with $r=(4C+4)^{-1}$. Then, for $u_0\in \dot B_\delta$ and $u \in
  D_r$,
  \begin{equation*}
    \|e^{\cdot S}u_0-\frac12 I_\infty(u,u)\|_{\dot Z^{-\frac12}([0,\infty))}\leq
    \delta+Cr^2\leq r,
  \end{equation*}
  due to \eqref{eq:bil_est_hom} and Corollary~\ref{cor:scatter_limit}.
  Similarly,
  \begin{align*}
    & \left \|\frac12 I_\infty(u_1,u_1)- \frac12 I_\infty(u_2,u_2)
    \right\|_{\dot Z^{-\frac12}([0,\infty))} \\
    & \leq C(\|u_1\|_{\dot Z^{-\frac12}([0,\infty))}+\|u_2\|_{\dot
      Z^{-\frac12}([0,\infty))})
    \|u_1-u_2\|_{\dot Z^{-\frac12}([0,\infty))}\\
    & \leq \frac12 \|u_1-u_2\|_{\dot Z^{-\frac12}([0,\infty))},
  \end{align*}
  hence $\Phi : D_r\to D_r, u \mapsto e^{\cdot S}u_0-\frac12
  I_\infty(u,u)$ is a strict contraction. It therefore has a unique
  fixed point in $D_r$, which solves \eqref{eq:int_eq}.  By the
  implicit function theorem the map $F_+: \dot B_\delta\to D_r$,
  $u_0\mapsto u$ is analytic because the map $(u_0,u) \mapsto e^{\cdot
    S}u_0-\frac12 I_\infty(u,u)$ is analytic.  Due to the embedding
  $\dot Z^{-\frac12}([0,\infty))\subset C([0,\infty),\dot
  H^{-\frac12}(\R^2))$ the regularity of the initial data persists
  under the time evolution. Concerning the results
  with respect to the negative time axis, we reverse the time
  $t\mapsto -t$ and apply the same arguments.
\end{proof}

\begin{remark}\label{rem:uniqueness}
  Up to now, we only know that the solution $u$ is unique in the
  subset $D_r\subset \dot Z^{-\frac12}([0,\infty))$.  The proof of the
  uniqueness assertion in the larger space
  $Z^{-\frac12}([0,T])$ will follow from the results in the
  subsequent subsection.
\end{remark}

\begin{proof}[Proof of Corollary~\ref{cor:scatter}]
  For initial data $u_0\in \dot H^{-\frac12,0}(\R^2)$, $\|u_0\|_{\dot
    H^{-\frac12,0}}<\delta$, the solution $u$ which was constructed
  above satisfies
  \begin{equation*}
    u(t)=e^{tS}(u_0-e^{-\cdot S}\frac12 I_{\infty}(u,u))(t) \ , \ t\in (0,\infty)
  \end{equation*}
  The existence of the limit $u_0-e^{-tS}\frac12 I_{\infty}(u,u)(t)\to
  u_+$ in $\dot H^{-\frac12,0}(\R^2)$ as $t\to \infty$ follows from
  Corollary~\ref{cor:scatter_limit}. The analyticity of the map
  $V_+:u_0\mapsto u_+$ follows from the analyticity of $F_+$ shown
  above.

  An obvious modification of the above proof also yields
  persistence of higher initial regularity, in particular if $u_0\in
  L^2(\R^2)$, then $u(t)\in L^2(\R^2)$ for all $t$.
  It remains to show $\|V_+(u_0)\|_{L^2}=\|u_0\|_{L^2}$. By approximation
  and a direct calculation for smooth solutions, we easily see that
  the $L^2$-norm is conserved. Due to the strong convergence in $\dot
  H^{-\frac12,0}(\R^2)$ we have weak convergence
  $e^{-tS}u(t)\rightharpoonup u_+$ in $L^2(\R^2)$ for $t\to \infty$,
  hence $\|u_+\|_{L^2}\leq \|u_0\|_{L^2}$.
  Let $F_-$ be the flow map with respect to $(-\infty,0)$
  according to Theorem~\ref{thm:gwp_hom},
  which is Lipschitz continuous. Because $\lim_{t\to \infty}e^{tS}u_+-u(t)=0$
  in $\dot H^{-\frac12,0}(\R^2)$, it follows
  $\lim_{t\to \infty}F_-(-t,e^{tS}u_+)=u_0$ in $\dot H^{-\frac12,0}(\R^2)$.
  Moreover, due to the $L^2$ conservation
  $\|u_+\|_{L^2}=\|F_-(-t,e^{tS}u_+)\|_{L^2}$ we have weak convergence
  $F_-(-t,e^{tS}u_+)\rightharpoonup u_0$ in $L^2(\R^2)$. Altogether,
  \begin{equation*}
    \|u_0\|_{L^2}\leq \lim_{t\to \infty} \|F_-(-t,e^{tS}u_+)\|_{L^2}=\|u_+\|_{L^2}.
  \end{equation*}
  
  The existence and analyticity of the local inverse $W_+$ follows
  from the inverse function theorem, because $V_+(0)=0$ and by
  \eqref{eq:bil_est_hom} and Corollary~\ref{cor:scatter_limit} we
  observe $D V_+(0)=Id$.
\end{proof}

\begin{proof}[Proof of Theorem~\ref{thm:lwp_hom_inhom}]
  For some $\delta>0$ and $R\geq \delta$ we define
  \begin{equation*}
    B_{\delta,R}:=\{u_0 \in H^{-\frac12,0}(\R^2) \mid
    u_0=v_0+w_0, \, \|v_0\|_{\dot H^{-\frac12,0}}<\delta, \|w_0\|_{L^2}<R\}.
  \end{equation*}
  Let $u_0 \in B_{\delta,R}$ with $u_0=v_0+w_0$.  We have $\chi
  e^{\cdot S} w_0 \in X$ and $\chi e^{\cdot S} v_0\in \dot
  Z^{-\frac12,0}$, which implies that $e^{\cdot S} u_0 \in
  Z^{-\frac12}([0,1])$ and
  \begin{equation*}
    \|e^{\cdot S}u_0\|_{Z^{-\frac12}([0,1])}\ls \delta+R.
  \end{equation*}
  We start with the case $R=\delta=(4C+4)^{-2}$, with the constant
  $C>0$ from \eqref{eq:bil_est_inhom}.  Define
  \begin{equation*}
    D_r:=\{u \in Z^{-\frac12}([0,1]) \mid \|u\|_{Z^{-\frac12}([0,1])}\leq r\},
  \end{equation*}
  with $r=(4C+4)^{-1}$. As above, we use \eqref{eq:bil_est_inhom} to
  verify that
  \begin{equation*}
    \Phi : D_r\to D_r, u \mapsto e^{\cdot S}u_0-\frac12 I_1(u,u)
  \end{equation*}
  is a strict contraction, for $u_0\in \dot B_{\delta,R}$.  It
  therefore has a unique fixed point in $D_r$, which solves
  \eqref{eq:int_eq} on the interval $(0,1)$.  By the implicit function
  theorem the map $B_{\delta,R}\to D_r$, $u_0\mapsto u$ is analytic.
  We also have the embedding $Z^{-\frac12}([0,1])\subset
  C([0,1];H^{-\frac12}(\R^2))$.  Now, we assume that $u_0\in
  B_{\delta,R}$ for $R\geq \delta=(4C+4)^{-2}$. We define
  $u_{0,\lambda}=\lambda^2 u_0(\lambda \cdot,\lambda^2 \cdot)$. For
  $\lambda=R^{-2}\delta^2$ we observe $u_{0,\lambda} \in
  B_{\delta,\delta}$. Therefore we find a solution $u_\lambda \in
  Z^{-\frac12,0}([0,1])$ on $(0,1)$ with $u_\lambda(0)=u_{0,\lambda}$.
  By rescaling \eqref{eq:scaling} we find a solution $u \in
  Z^{-\frac12,0}([0,\delta^6R^{-6}])$ on $(0,\delta^6R^{-6})$ with
  $u(0)=u_0$. We notice that in \eqref{eq:bil_est_inhom}, the left
  hand side is in the homogeneous space $\dot Z^{-\frac12,0}$, hence
  all of the above remains valid (or even becomes easier) if we
  exchange $Z^{-\frac12,0}([0,1])$ by the smaller space $\dot
  Z^{-\frac12,0}([0,1])$.

  It remains to show the uniqueness claim.  Assume that $u_1,u_2 \in
  Z^{-\frac12,0}([0,T])$ are two solutions such that
  $u_1(0)=u_2(0)$.  Moreover, we assume that
  \begin{equation*}
    T':=\sup\{0\leq t\leq T \mid u_1(t)=u_2(t)\}<T.
  \end{equation*}
  By a translation in $t$ it is enough to consider $T'=0$. A
  combination of \eqref{eq:bil_est_hom} and \eqref{eq:bil_est_dyadic2}
  yields the following: Decompose $u_j=v_j+w_j$, where $v_j\in
  X([0,T])$, $w_j \in \dot Z^{-\frac12,0}([0,T])$ and $w_j(0)=0$.
  Then, there exists $C>0$, such that for all small $0<\tau \leq T'$
  \begin{align*}
    & \|u_1-u_2\|_{Z^{-\frac12,0}([0,\tau])}
    =\left\|\frac12 I_\tau(u_1,u_1)-\frac12 I_\tau(u_2,u_2)\right\|_{Z^{-\frac12,0}([0,\tau])}\\
    \leq{} & C \tau^{\frac14-\eps}
    \left(\|v_1\|_{X([0,\tau])} +\|v_2\|_{X([0,\tau])}\right)\|u_1-u_2\|_{Z^{-\frac12,0}([0,\tau])}\\
    +& C \left(\|w_1\|_{\dot Z^{-\frac12,0}([0,\tau])} +\|w_2\|_{\dot
        Z^{-\frac12,0}([0,\tau])}\right)
    \|u_1-u_2\|_{Z^{-\frac12,0}([0,\tau])}.
  \end{align*}
  We apply Proposition~\ref{prop:cont_no}, Part~\ref{it:cont_no2} and
  obtain
  \begin{equation*}
    \|u_1-u_2\|_{Z^{-\frac12,0}([0,\tau])}
    \leq  \frac12 \|u_1-u_2\|_{Z^{-\frac12,0}([0,\tau])},
  \end{equation*}
  which contradicts the definition of $T'$.
\end{proof}
\section*{Erratum}\label{sect:erratum}
In this section we will point out and correct an error which has occured in Section \ref{sect:disp_est} and which came to our attention after publication of this work. It has no implications for the main results of the paper, but some adjustments are necessary in the section on $U^p$ and $V^p$ spaces.

In Definition \ref{def:v} of the space $V^p$ we included the normalizing condition $\lim_{t\to \infty}v(t)=0$. This, however, leads the problem that Theorem \ref{thm:duality} on duality is incorrect as stated: Indeed, for fixed nonzero $\phi \in L^2$ consider the functional $T(u)=\lb \lim_{t \to \infty} u(t),\phi\rb$ on $U^p$. This cannot be represented by $V^{p'}$ functions via the bilinear form $B$. The following modifications can be performed in order to resolve this issue:

\begin{enumerate}
\item Define $\mathcal{Z}_0$ as follows: $\mathcal{Z}_0$ is defined as the set
of finite partitions $-\infty<t_0<t_1<\ldots<t_K\leq \infty$.
\item Remove the notational convention for $u(-\infty)$ and $u(\infty)$ from the Proposition \ref{prop:u}, item \ref{it:u_limits} (this change is purely notational).
\item  We define $V^p$ as the normed space of all
  functions $v:\R\to L^2$ such that $\lim_{t\to\pm
    \infty}v(t)$ exist and for which the norm
  \begin{equation*}
    \|v\|_{V^p}:=\sup_{\{t_k\}_{k=0}^K \in \mathcal{Z}}
    \left(\sum_{k=1}^{K}
      \|v(t_{k})-v(t_{k-1})\|_{L^2}^p\right)^{\frac{1}{p}}
  \end{equation*}
  is finite, where we use the convention that $v(-\infty)=\lim_{t\to -\infty}$ and $v(\infty)=0$ (Here, the difference is that $v(\infty)=0$ does not necessarily conincide with the limit at $\infty$). This convention will also be used in the sequel.

Likewise, let $V^p_-$ denote the closed subspace of all $v \in V^p$ with $\lim_{t\to -\infty}v(t)=0$ (Note that the space $V^p_-$ is unchanged).
\item Proposition \ref{prop:gen_deriv} on the bilinear form remains unchanged. However, notice that our convention is $v(t_K)=0$ since $t_K=\infty$ for all partitions $\{t_k\}_{k=0}^K \in \mathcal{Z}$.
\item In the proof of Theorem \ref{thm:duality} we have $\tilde{v}(t)=v(t)$ and the error in the calculation at the end of the proof is corrected.
\item The conclusion of Proposition \ref{prop:c1} remains valid with the modified definition of $V^p$ by a similar proof. Alternatively, it can be seen as follows: We know that it is correct for $v-\lim_{t\to \infty}v(t)$ as proved in Section \ref{sect:disp_est} and we obtain
\[
B(u,v-\lim_{t\to \infty}v(t))=-\int_{-\infty}^\infty \lb u'(s),v(s)-\lim_{t\to \infty}v(t)\rb ds
\]
which is equivalent to
\[
B(u,v)+\lim_{t\to \infty}\lb u(t),v(t)\rb =-\int_{-\infty}^\infty \lb u'(s),v(s)-\lim_{t\to \infty}v(t)\rb ds
\]
which shows the claim.
\end{enumerate}

\section*{Acknowledgments}
The research of this work has been supported by the Sonderforschungsbereich 611 (DFG).

\end{document}